\documentclass[a4paper]{amsart}

\usepackage{amsmath}
\usepackage{mathtools}
\usepackage{amssymb}
\usepackage{amsthm}
\usepackage{etoolbox}
\usepackage[colorlinks=true,linkcolor=blue,citecolor=blue,urlcolor=blue,citebordercolor={0 0 1},urlbordercolor={0 0 1},linkbordercolor={0 0 1}]{hyperref} 

\usepackage{float}
\usepackage{subcaption}
\usepackage{rotating}
\usepackage{multirow}
\usepackage{tikz}
\usepackage{tikz-cd}
\usepackage[all]{xy}
\usepackage[compress,capitalize]{cleveref}

\usepackage{comment}
\usepackage{enumitem}

%\usepackage{geometry}
%\newgeometry{margin=1.58in}
%\textheight=9in

\floatplacement{table}{p}

\usetikzlibrary{positioning}

\newcommand{\relscale}{1}

\newenvironment{relbare}[1][0.32 * \relscale]
{\begin{tikzpicture}[baseline={([yshift=-.5ex]current bounding box.center)}]
\pgftransformcm{0}{-#1}{#1}{0}{\pgfpointorigin};
}
{
\end{tikzpicture}
}

\newenvironment{relmatrix}[3][0.32 * \relscale]
    {\left[ \begin{relbare}[#1] 
\useasboundingbox (1-0.5,1-.4) -- (#2 +.5, #3 +.4);
}
{
\end{relbare} \right]
}

\newcommand{\relgensize}{0.33}

\newcommand{\relcoeff}[3][]{\ifstrempty{#1}{\draw [fill] (#2, #3) circle (0.19)}{\node at (#2, #3) {\tiny $\mathbf{#1}$}};}
\newcommand{\relgen}[4]{\draw (#1 - \relgensize, #2 - \relgensize) rectangle (#3 + \relgensize, #4 + \relgensize);}
\newcommand{\relgenperm}[4]{\draw (#1 - \relgensize, #2 - \relgensize) rectangle (#3 + \relgensize, #4 + \relgensize); \draw[densely dotted] (#1 - \relgensize, #2 - \relgensize) -- (#3 + \relgensize, #4 + \relgensize);}
\newcommand{\relgenpermT}[4]{\draw (#1 - \relgensize, #2 - \relgensize) rectangle (#3 + \relgensize, #4 + \relgensize); \draw[densely dotted] (#1 - \relgensize, #4 + \relgensize) -- (#3 + \relgensize, #2 - \relgensize);}

\renewcommand{\det}{\operatorname{det}}
\newcommand{\perm}{\operatorname{perm}}

\DeclareMathOperator{\hook}{hook}

\DeclareMathOperator{\VP}{VP}
\DeclareMathOperator{\VNP}{VNP}
\DeclareMathOperator{\sgn}{sgn}
\DeclareMathOperator{\Sym}{Sym}
\DeclareMathOperator{\Frac}{Frac}

\DeclareMathOperator{\GL}{GL}

\DeclareMathOperator{\chartext}{char}
\newcommand{\co}{\colon}
\newcommand{\bk}{\Bbbk}
\newcommand{\tensor}{\otimes}

\newcommand\ZZ{\mathbb{Z}}

\newcommand\PP{\mathbb{P}}
\newcommand\bx{\mathbf{x}}
\newcommand\bS{\mathbf{S}}

\newcommand{\epf}{\qed \vspace{+10pt}}

        \theoremstyle{plain}
        \newtheorem{theorem}{Theorem}[section]
        \newtheorem{corollary}[theorem]{Corollary}
        \newtheorem{lemma}[theorem]{Lemma}
        \newtheorem{question}[theorem]{Question}
        \newtheorem{proposition}[theorem]{Proposition}
        \newtheorem{conjecture}[theorem]{Conjecture}

        \theoremstyle{definition}
        \newtheorem{definition}[theorem]{Definition}

        \newtheorem*{example*}{Example}

        \theoremstyle{remark}
        \newtheorem{remark}[theorem]{Remark}
        \newtheorem*{remark*}{Remark}  
        
\numberwithin{equation}{section}

\crefformat{section}{#2\S #1#3}

\title[Syzygies of the determinant and permanent]{Syzygies of the apolar ideals of the \\determinant and permanent}
\author[J.~Alper]{Jarod Alper}
\address{Department of Mathematics \\ University of Washington \\ Box 354350 \\ Seattle, WA 98195-4350, USA}
\email{jarod@uw.edu}
\author[R.~Rowlands]{Rowan Rowlands}
\address{Department of Mathematics \\ University of Washington \\ Box 354350 \\ Seattle, WA 98195-4350, USA}
\email{rowanr@uw.edu}
\subjclass[2010]{Primary 13D02; Secondary 13P20, 68Q17, 14L30}

\begin{document}

\begin{abstract}  We investigate the space of syzygies of the apolar ideals $\det_n^\perp$ and $\perm_n^\perp$ of the determinant $\det_n$ and permanent $\perm_n$ polynomials.  Shafiei had proved that these ideals are generated by quadrics and provided a minimal generating set.  Extending on her work, in characteristic distinct from two, we prove that the space of relations of $\det_n^{\perp}$ is generated by linear relations and we describe a minimal generating set. 
The linear relations of $\perm_n^{\perp}$ do not generate all relations, but we provide a minimal generating set of linear and quadratic relations.  For both $\det_n^\perp$ and $\perm_n^\perp$, we give formulas for the Betti numbers $\beta_{1,j}$, $\beta_{2,j}$ and $\beta_{3,4}$ for all $j$ as well as conjectural descriptions of other Betti numbers.  Finally, we provide representation-theoretic descriptions of certain spaces of linear syzygies. 
\end{abstract}

\maketitle
\section{Introduction}

This paper began as an investigation into the difference in complexity between the determinant and permanent polynomials by exploring homological properties of their apolar ideals.

To set up our notation, let $\bk$ be a field and $n$ be a positive integer.  Let $\mathbf{x} =(x_{i,j})_{1\leq i,j\leq n}$ denote an $n \times n$ matrix of indeterminates.  The determinant and permanent polynomials are defined as
\begin{align*}
\det_n & = \det(\mathbf x) =  \sum_{\sigma \in \bS_n} \sgn(\sigma) \, x_{1, \sigma(1)} x_{2, \sigma(2)} \dotsm x_{n, \sigma(n)}, \\
\shortintertext{and}
\perm_n & = \perm(\mathbf x) = \sum_{\sigma \in \bS_n} x_{1, \sigma(1)} x_{2, \sigma(2)} \dotsm x_{n, \sigma(n)}.
\end{align*}
respectively, where $\bS_n$ is the symmetric group on $n$ letters.  If we denote the $\bk$-vector space of $n \times n$ matrices as $M_n(\bk)$, then the polynomials $\det_n$ and $\perm_n$ are elements in the vector space $\Sym^n M_n(\bk)^{\vee}$ of homogenous polynomials on $M_n(\bk)$ of degree $n$.

Understanding the difference between the determinant and permanent polynomials is of central interest in theoretical computer science and, in particular, algebraic complexity theory.  See \cref{sec:motivation} for more details on the connection to complexity theory.

\subsection{Apolarity} \label{sec:apolarity}

In this paper, we will investigate homological properties of the apolar ideals of $\det_n$ and $\perm_n$.  We begin by recalling the definition of the apolar ideal. Let $W$ be a vector space over $\bk$ of dimension $n$ and $f(x_1, \dotsc, x_n) \in \Sym^d W^{\vee}$ be a homogenous polynomial on $W$ of degree $d$, where we have chosen a basis $x_1, \dotsc, x_n$ of $W^{\vee}$.  Let $R = \Sym^* W^{\vee}$ and $S = \Sym^* W$; we will identify $R$ with the polynomial ring  $\bk[x_1, \ldots, x_n]$, and $S$ with the  subring $\bk[x_1^{-1}, \ldots, x_n^{-1}]$ of the fraction field $\Frac(R)$ of $R$.  Using this identification, multiplication induces an $S$-module structure $\star \co S \times R \to R$ as follows: for $f \in R$ and $g \in S \subset \Frac(R)$, then $g \star f = gf$ if $gf \in R$ and $0$ otherwise. Note that $g$ acts like ``differentiation without coefficients'': that is, if $h \in R$ is a polynomial independent of $x_i$, then
\begin{gather*}
x_i^{-1} \star h x_i^n = h x_i^{n-1} = \frac{1}{n} \frac{\partial}{\partial x_i} h x_i^n \\
\shortintertext{for $n > 0$ as long as $n$ is invertible in $\bk$, and}
x_i^{-1} \star h = 0 = \frac{\partial}{\partial x_i} h.
\end{gather*}

The \emph{apolar ideal} of $f$ is the ideal
\begin{align*}
f^{\perp} & \coloneqq \left\{ g \, \mid \, g \star f = 0 \right\}  \subseteq S.
\end{align*}

The quotient $\Sym^* W / f^{\perp}$ is a graded Artinian Gorenstein algebra with socle in degree $d$.  A theorem of Macaulay \cite{macaulay} states that the assignment $f \mapsto f^{\perp}$ gives a one-to-one bijection between homogeneous polynomials $f \in \Sym^d W^{\vee}$ up to scaling and homogenous ideals $I \subset \Sym^* W$ such that the quotient $\Sym^d W / I$ is a graded Artinian Gorenstein algebra with socle in degree $d$.   In particular, the homogeneous ideal $f^{\perp}$ determines $f$ uniquely up to scaling.  

Thus one can attempt to distinguish the determinant and permanent polynomials via studying their apolar ideals.  Specifically, we ask:

\begin{question} What are the minimal graded free resolutions of $S/\det_n^{\perp}$ and \linebreak $S/\perm_n^{\perp}$?
\end{question}

This question was the starting point for our investigations.  It is a theorem of Shafiei \cite{shafiei} that for every $n \ge 2$, both the apolar ideals $\det_n^{\perp}$ and $\perm_n^{\perp}$ are minimally generated by ${n + 1\choose 2}^2$ quadrics.  These quadrics can be explicitly described --- see \cref{sec:shafiei} for a summary of Shafiei's work.  The main result of this paper determines the relations between these generators (i.e., the first syzygies):

\begin{theorem} \label{main-theorem} \, \hspace{1cm}
\begin{enumerate}[label=\alph*),ref=(\alph*)]
\item \label{main-theorem-a}
If $\chartext(\bk) \neq 2$, all relations of $\det_n^{\perp}$ are minimally generated by $4 {n+1 \choose 3} {n+2 \choose 3}$ linear relations.  
\item \label{main-theorem-b}
In arbitrary characteristic, all relations of $\perm_n^{\perp}$ are minimally generated by $4 {n+1 \choose 3} {n+2 \choose 3}$ linear relations and $2 {n \choose 2}{n \choose 4}$ quadratic relations.
\end{enumerate}
\end{theorem}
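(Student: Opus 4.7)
The plan is to prove \cref{main-theorem} by first exhibiting explicit candidate first syzygies of Shafiei's quadric generators, verifying that their count matches the stated formulas via a linear-independence argument, and finally showing that these candidates generate the entire first syzygy module.

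\textbf{Construction.} Shafiei's generators of $\det_n^\perp$ and $\perm_n^\perp$ are quadratic expressions in the dual operators $\partial_{i,j} \coloneqq x_{i,j}^{-1}$, indexed by pairs of row- and column-multisets of size at most $2$ (the distinct-index determinant-type quadric being $\partial_{i_1,j_1}\partial_{i_2,j_2}+\partial_{i_1,j_2}\partial_{i_2,j_1}$, with analogous shapes in the repeated-index and same-row/column cases). A linear syzygy among them is a relation $\sum_\alpha \ell_\alpha q_\alpha = 0$ with $\ell_\alpha \in S_1$, i.e.\ a way of producing the same cubic monomial from several pairs (variable, generator). The natural source of such coincidences is a $3\times 3$ submatrix of the variable grid, parametrized by a $3$-multiset of rows $R$ and a $3$-multiset of columns $C$. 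I would enumerate the independent syzygies supported on a fixed $R$-by-$C$ block, expecting exactly $4$ basic types per block (four independent cubic identities obtainable from the Plücker-like structure of the $3\times 3$ det/perm). Summing over choices of $R$ and $C$ --- with the asymmetry between $\binom{n+1}{3}$ and $\binom{n+2}{3}$ reflecting a different handling of multisets with repeats in the row versus column directions --- should recover precisely $4\binom{n+1}{3}\binom{n+2}{3}$ candidates. For \cref{main-theorem-b}, the additional quadratic syzygies of $\perm_n^\perp$ should arise from $2\times 4$ (or $4 \times 2$) block identities indexed by a $2$-subset and a $4$-subset of indices, explaining the count $2\binom{n}{2}\binom{n}{4}$.

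\textbf{Independence and counting.} With the candidate syzygies explicit, I would prove their linear independence by choosing a monomial order on the free $S$-module $S(-2)^{\binom{n+1}{2}^2}$ (a lex-type order refining by the leading variable of each coefficient and each generator) and observing that the candidates have pairwise distinct leading terms. This gives explicit lower bounds $\beta_{2,3} \ge 4\binom{n+1}{3}\binom{n+2}{3}$ in both cases and $\beta_{2,4}(\perm_n^\perp) \ge 2\binom{n}{2}\binom{n}{4}$ in \cref{main-theorem-b}.

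\textbf{Completeness --- the main obstacle.} The crux is showing that no further syzygies exist. I would exploit the large symmetry groups acting on the setup --- $(\GL_n \times \GL_n) \rtimes \ZZ/2$ for $\det_n^\perp$, and the stabilizer of $\perm_n$ (diagonal tori extended by $S_n \times S_n$ with its $\ZZ/2$ of transpose) for $\perm_n^\perp$ --- to decompose $\operatorname{Syz}_1$ into isotypic components and match them against the candidate families constructed above. In parallel, the Artinian Gorenstein structure of $S/f^\perp$ (socle in degree $n$, projective dimension $n^2$) makes the minimal free resolution self-dual, so that palindromy of the Betti table combined with a Hilbert-series computation of $S/f^\perp$ should pin down $\beta_{2,3}$ and $\beta_{2,4}$ exactly. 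The most delicate step is ruling out quadratic syzygies of $\det_n^\perp$ when $\chartext(\bk) \neq 2$: since $\det_n = \perm_n$ in characteristic $2$ (forcing $\beta_{2,4}(\det_n^\perp)$ to be nonzero there), a characteristic-dependent cancellation must be isolated in the syzygy-producing identities, presumably showing that each candidate quadratic syzygy for $\perm_n^\perp$ is expressible in terms of the linear syzygies of $\det_n^\perp$ after dividing by $2$.
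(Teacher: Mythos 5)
Your construction-and-counting phase is broadly compatible with what the paper does (the linear syzygies do live on $3\times 3$ blocks and the quadratic ones for the permanent on $2\times 4$ blocks), though the count is not literally ``$4$ per block'': the paper obtains $4\binom{n+1}{3}\binom{n+2}{3}$ as a weighted sum over multidegrees, where the dimension of the space of linear relations of a fixed multidegree ranges from $0$ to $4$ depending on the associated pair of partitions of $3$ (see \cref{tab:dimensions-of-linear-relation-spaces}); the factorization into $4\binom{n+1}{3}\binom{n+2}{3}$ only emerges after summation. That is repairable. The genuine gap is in your completeness step, and it is fatal as proposed. The Hilbert function determines only the alternating sums $\sum_i(-1)^i\beta_{i,j}$; in degree $4$ this gives $\beta_{2,4}-\beta_{3,4}$, and in degree $j\ge 5$ it gives $\beta_{2,j}-\beta_{3,j}+\cdots$, so it cannot isolate $\beta_{2,4}$ or show $\beta_{2,j}=0$ for $j\ge 5$. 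But deciding whether $\beta_{2,4}=0$ (determinant, $\characteristic(\bk)\neq 2$) versus $\beta_{2,4}=2\binom{n}{2}\binom{n}{4}$ (permanent) is exactly the hard content of the theorem --- the two ideals have identical Hilbert functions, so no Hilbert-series argument can distinguish them. Gorenstein self-duality does not help either: it relates $\beta_{2,4}$ to $\beta_{n^2-2,\,n^2+n-4}$ at the far end of the resolution, which is equally unknown. (The paper runs this computation in the opposite direction: it first establishes $\beta_{2,4}$ by hand and only then uses the Hilbert function to extract $\beta_{3,4}$ in \cref{thm:beta34}.)

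The isotypic-decomposition fallback has its own problems: the theorem is asserted for all $\characteristic(\bk)\neq 2$, where $\GL_n\times\GL_n$ is not linearly reductive, so you cannot decompose the syzygy module into irreducibles; and even in characteristic $0$, computing the multiplicity of each irreducible in the first syzygy module is essentially equivalent to the original problem (the paper's \cref{sec:repn-theory} derives those decompositions \emph{from} the already-proved dimension counts, not the other way around). What is actually needed, and what the paper supplies, is a characteristic-free generation argument: for singular multidegrees, relations are identified with zero-magic edge-labelings of the Cayley graph of $\bS_m$, which by Doob's theorem are spanned by cycle labelings, and every cycle labeling is reduced to commutator labelings (i.e.\ multiples of the single linear relation $\rho_S$); for plural multidegrees, an explicit case analysis in degree $\le 4$ plus a pigeonhole reduction from degree $m>4$ down to degree $4$. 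It is inside that degree-$4$ case analysis --- specifically the multidegree $(2{+}2,\,1{+}1{+}1{+}1)$ --- that the factor of $2$ appears, confirming your guess about the mechanism behind the characteristic-$2$ exception, but your proposal gives no route to discovering or verifying that identity.
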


\begin{remark}
Observe that in characteristic 2, the determinant and permanent polynomials are equal, and thus the relations of $\det_n^{\perp}$ are described by Part \ref{main-theorem-b}. 
We find it interesting that in $\chartext(\bk)=2$, the syzygies of the apolar ideal of $\det_n$ seem to resemble those of the $\perm_n$ in arbitrary characteristic.
\end{remark}
As in Shafiei's result, we provide an explicit minimal generating sets of relations for both $\det_n^{\perp}$ and $\perm_n^{\perp}$; see \cref{thm:linear-relations-list,thm:linear+Q-generate-all-permanent}.

Moreover, we compute the dimension of the space of linear second syzygies of both $\det_n$ and $\perm_n$ (\cref{thm:beta34}) and conjecture on the dimension of linear higher syzygies (\cref{conj:betarr1}).  In \cref{sec:higher-syzygies}, we provide Macaulay2 computations of the full or partial Betti tables for small $n$.

Finally, the free resolution of the apolar ideal $\det_n^{\perp}$ is naturally a free resolution of representations of the symmetry group $G_{\det_n}$ of the determinant $\det_n$.  In \cref{sec:repn-theory}, we provide a complete representation-theoretic description of the spaces of linear generators, relations and second syzygies (\cref{thm:generators-repn,thm:relations-repn,thm:2ndsyzygies-repn}).  An outline of the paper is provided in \cref{sec:outline}.

\subsection{Motivation} \label{sec:motivation}
Understanding the difference between the ``complexity'' of the determinant and permanent polynomials is of fundamental significance to algebraic complexity theory.  There are several notions of complexity of a homogeneous polynomial such as determinantal complexity, Waring rank, and product rank.
Valiant conjectured in \cite{valiant1} and \cite{valiant2} that the determinantal complexity of the $n \times n$ permanent $\perm_n$ is not bounded above by any polynomial in $n$ and, moreover, showed that this conjecture implies a separation between the algebraic complexity classes $\VP_{\rm e}$ and  $\VNP$, which are algebraic analogues of P and NP.

Since the apolar ideal uniquely determines a homogenous polynomial up to scaling, it is natural to ask the following vaguely formulated question:
\begin{question}  \label{ques3}
Can algebraic or homological properties of the ideal $f^{\perp}$ be used to give lower or upper bounds for any complexity measure?
\end{question}

This question was our original motivation and we decided to focus on the minimal free graded resolutions of $f^{\perp}$.  For instance, in \cite{ranestad-schreyer}, it was shown that if $f^{\perp}$ is generated in degree $D$, then the Waring rank of $f$ is bounded below by $\frac{1}{D} \dim_k S/f^{\perp}$.  One might hope that there is a stronger lower bound involving  higher syzygies.   

Our search for new lower bounds of the Waring rank, determinantal complexity, and other complexity measures in terms of the minimal graded free resolutions of $f^{\perp}$ has so far been elusive.   Therefore, we unfortunately have no positive answers to \cref{ques3}.  Nevertheless, we find the problem of determining the syzygies of the apolar ideals of $\det_n$ and $\perm_n$ intrinsically interesting and our work has generating appealing connections to combinatorics and representation theory.  

\subsection*{Acknowledgements} During the preparation of this paper, the first author was partially supported by the Australian Research 
Council grant DE140101519. The second author was partially supported by David Smyth's Australian Research Council grant DE140100259.  Both authors would like to thank Daniel Erman for providing helpful suggestions.

\section{Background}

\subsection{Notation}
As in the introduction, $\mathbf{x} =(x_{i,j})_{1\leq i,j\leq n}$ will denote an $n \times n$ matrix of indeterminates.  We set $R = \bk[x_{i,j}]$, a polynomial ring in $n^2$ variables.  The polynomials $\det_n$ and $\perm_n$ are elements of this ring.  Let $M_n(\bk)$ be the $\bk$-vector space of $n \times n$ matrices with basis $\{ X_{i,j} \}$ dual to $\{x_{i,j}\}$ for $1\leq i,j\leq n$.  Define $S = \Sym^* M_n(\bk) = \bk[X_{i,j}]$. The apolar ideals $\det_n^\perp$ and $\perm_n^\perp$ are ideals in this ring.  

The operation $\star \co S \times R \to R$ defined in \cref{sec:apolarity} gives $R$ the structure of an $S$-module.  Since no variable appears in $\det_n$  with degree greater than one, we have that $X_{i,j} \star \det_n = \frac{\partial}{\partial x_{i,j}} \det_n$ so that we may identify $\det_n^{\perp}$ with the ideal of polynomials $g \in S$ such that  $g \left( \frac{\partial}{\partial x_{1,1}}, \frac{\partial}{\partial x_{1,2}}, \dotsc, \frac{\partial}{\partial x_{n,n}} \right)( \det_n) = 0$. The same applies for $\perm_n$.

The first observation to make is that the action of $\frac{\partial}{\partial x_{i,j}}$ on $\det_n$ is the same as taking the $i,j$th minor, up to sign; that is,
\begin{equation*}
X_{i,j} \star \det_n \mathbf x = \frac{\partial}{\partial x_{i,j}} \det_n \mathbf x = (-1)^{i+j} \det_{n-1} \mathbf x(i; j)
\end{equation*}
where $\mathbf x(i; j)$ denotes the submatrix of $\mathbf x$ obtained by deleting the $i$th row and $j$th column. Similarly,
\begin{equation*}
X_{i,j} \star \perm_n \mathbf x = \frac{\partial}{\partial x_{i,j}} \perm_n \mathbf x = \perm_{n-1} \mathbf x(i; j)
\end{equation*}
Therefore, since $(S / \det_n^\perp)_d$ and $(S / \perm_n^\perp)_d$ are isomorphic to the spaces of $d$th derivatives of $\det_n$ and $\perm_n$ respectively, and since there are $\binom{n}{d}^2$ minors or permanent-minors of order $d$ for an $n \times n$ matrix and they are linearly independent, we obtain the following fact:
\begin{lemma} \label{thm:dim-S/det}
The dimensions of $(S / \det_n^\perp)_d$ and $(S / \perm_n^\perp)_d$ are each $\binom{n}{d}^2$.
\end{lemma}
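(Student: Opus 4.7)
The plan is to identify $(S/f^\perp)_d$ as a concrete subspace of $R$ for $f = \det_n$ or $\perm_n$, and then count a basis. By the very definition $f^\perp = \{g : g \star f = 0\}$, the $\bk$-linear map $\varphi_d \co S_d \to R_{n-d}$, $g \mapsto g \star f$, has kernel $(f^\perp)_d$, so $(S/f^\perp)_d \cong \operatorname{im}(\varphi_d)$. It therefore suffices to determine $\operatorname{im}(\varphi_d)$ in each case.

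For $f = \det_n$, I would iterate the formula $X_{i,j} \star \det_n = (-1)^{i+j}\det_{n-1}(\mathbf{x}(i;j))$ already recorded in the excerpt. Apply it to an arbitrary basis monomial $X_{i_1,j_1}\cdots X_{i_d,j_d}$ of $S_d$. After the first application, the output involves only variables $x_{i',j'}$ with $i' \neq i_1$ and $j' \neq j_1$, so any subsequent $X_{i_k,j_k}$ with $i_k = i_1$ or $j_k = j_1$ kills it. Continuing inductively, the monomial acts as zero unless the row indices $i_1,\dots,i_d$ are pairwise distinct and the column indices $j_1,\dots,j_d$ are pairwise distinct, in which case it acts as $\pm\det_{n-d}(\mathbf{x}(I;J))$ with $I=\{i_1,\dots,i_d\}$ and $J=\{j_1,\dots,j_d\}$. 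Consequently $\operatorname{im}(\varphi_d)$ is exactly the $\bk$-span of all $(n-d)\times(n-d)$ minors of $\mathbf x$.

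It then remains to see that these minors are linearly independent, of which there are $\binom{n}{n-d}^2 = \binom{n}{d}^2$ in total. For each pair $(I,J)$ with $|I|=|J|=d$, pick any bijection $\tau \co \{1,\dots,n\}\setminus I \to \{1,\dots,n\}\setminus J$; the monomial $\prod_{i \notin I} x_{i,\tau(i)}$ occurs in $\det_{n-d}(\mathbf{x}(I;J))$ (with coefficient $\pm 1$) and, since the multisets of row and column indices of its variables are precisely $\{1,\dots,n\}\setminus I$ and $\{1,\dots,n\}\setminus J$, it cannot appear in any minor indexed by a different pair $(I',J')$. A standard linear algebra argument then gives linear independence.

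The argument for $\perm_n$ is identical, after replacing signs by $+1$ throughout and using $X_{i,j} \star \perm_n = \perm_{n-1}(\mathbf{x}(i;j))$. There is no substantive obstacle: the proof is essentially a direct unpacking of the $S$-module structure on $R$ combined with the standard independence of minors (resp.\ permanental minors) of a generic matrix.
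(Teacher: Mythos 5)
Your proposal is correct and follows the same route as the paper: identify $(S/f^\perp)_d$ with the space of $d$th order partial derivatives of $f$ (the image of $g \mapsto g \star f$), observe that this space is spanned by the $\binom{n}{d}^2$ complementary minors (resp.\ permanental minors), and note that these are linearly independent because distinct minors have pairwise disjoint monomial supports. You simply spell out the details that the paper leaves implicit.
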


Now, consider the minimal graded free resolutions
\begin{equation} \label{E:free-det}
 \dotsb \to F_2 \xrightarrow{d_2} F_1 \xrightarrow{d_1} F_0 \to S / \det_n^\perp \to 0
 \end{equation}

and
\begin{equation} \label{E:free-perm}
 \dotsb \to F'_2 \xrightarrow{d'_2} F'_1 \xrightarrow{d'_1} F'_0 \to S / \perm_n^\perp \to 0
 \end{equation}
 where $F_i$ and $F_i'$ are free graded $S$-modules of the form
$\bigoplus_j S(-j)^{\beta_{i,j}}$.
Elements of the kernel of $d_{i+1}$ or $d_{i+1}'$ are called \emph{$i$th syzygies}.  The numbers $\beta_{i,j}$ are called the {\it graded Betti numbers}.  Clearly $F_0 = F_0' = S$. The summands of $F_1$ (resp. $F'_1$) correspond to a minimal set of generators of $\det_n^\perp$ (resp. $\perm_n^{\perp}$).   

\subsection{Generators} \label{sec:shafiei}
Shafiei determined sets of minimal generators for $\det_n^\perp$ and $\perm_n^{\perp}$:

\begin{theorem}{\rm {\cite[Thms 2.12--13]{shafiei}}} \label{thm:Shafiei}  \label{thm:beta12}
The apolar ideal $\det_n^\perp$ is minimally generated by the following polynomials:
\begin{align*}
& X_{i,j}^2, && \text{for $i,j = 1, \dotsc, n$;} \\
& X_{i,j} X_{i,k}, && \text{for $i,j,k = 1, \dotsc, n$, $j \neq k$;} \\
& X_{i,j} X_{k,j}, && \text{for $i,j,k = 1, \dotsc, n$, $i \neq k$; and} \\
& X_{i,j} X_{k,l} + X_{i,l} X_{k,j}, && \text{for $i,j,k,l = 1, \dotsc, n$ and $i \neq k$, $j \neq l$.}
\end{align*}
The apolar ideal $\perm_n^\perp$ is minimally generated by the following polynomials:
\begin{align*}
& X_{i,j}^2, && \text{for $i,j = 1, \dotsc, n$;} \\
& X_{i,j} X_{i,k}, && \text{for $i,j,k = 1, \dotsc, n$, $j \neq k$;} \\
& X_{i,j} X_{k,j}, && \text{for $i,j,k = 1, \dotsc, n$, $i \neq k$; and} \\
& X_{i,j} X_{k,l} - X_{i,l} X_{k,j}, && \text{for $i,j,k,l = 1, \dotsc, n$ and $i \neq k$, $j \neq l$.}
\end{align*}
In particular, both ideals are generated by 
$\beta_{1,2}  = \binom{n+1}{2}^2$
quadrics, and all other graded Betti numbers $\beta_{1,j}$ for $j \neq 2$ are zero.
\end{theorem}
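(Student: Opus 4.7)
The plan is to prove Shafiei's theorem in three stages: (i) verify that each listed quadric annihilates $f \in \{\det_n,\perm_n\}$ under the apolar action; (ii) show that these quadrics form a basis of $(f^{\perp})_2$ via a dimension count; and (iii) rule out minimal generators in degrees $\ge 3$ by bounding $\dim(S/I)_d$ from above by $\binom{n}{d}^2$, where $I \subseteq S$ is the ideal generated by the listed quadrics. Together with the surjection $S/I \twoheadrightarrow S/f^{\perp}$ and \cref{thm:dim-S/det}, this forces $I = f^{\perp}$ and yields $\beta_{1,j}=0$ for $j \neq 2$.

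For stage (i), I will use $X_{i,j}\star f = \partial f/\partial x_{i,j}$. Multilinearity of $\det_n$ and $\perm_n$ in each variable kills $X_{i,j}^2$; the fact that $\partial f/\partial x_{i,j}$ is supported on variables outside row $i$ and column $j$ kills $X_{i,j}X_{i,k}$ and $X_{i,j}X_{k,j}$. For the mixed quadrics, I will use the Laplace expansion along rows $i$ and $k$: $\det_n = \sum_{j < l} (x_{i,j}x_{k,l}-x_{i,l}x_{k,j})\cdot C_{i,k}^{j,l}$, where $C_{i,k}^{j,l}$ is the signed $(n-2)\times(n-2)$ minor on the remaining rows and columns. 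Taking $\partial^2/\partial x_{i,j}\partial x_{k,l}$ and $\partial^2/\partial x_{i,l}\partial x_{k,j}$ then yields $\pm C_{i,k}^{j,l}$ with opposite signs for $\det_n$ and the same sign for $\perm_n$, so the ``$+$'' and ``$-$'' generators indeed lie in the respective apolar ideals. For stage (ii), the count $n^2 + 2n\binom{n}{2} + \binom{n}{2}^2$ simplifies to $\binom{n+1}{2}^2$, and the listed generators are manifestly linearly independent once a representative of each $\{(i,j),(k,l)\}$-orbit of mixed generators is chosen. Since $\dim(f^{\perp})_2 = \binom{n^2+1}{2}-\binom{n}{2}^2 = \binom{n+1}{2}^2$ by \cref{thm:dim-S/det}, they form a basis of $(f^{\perp})_2$.

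For stage (iii), I will consider an arbitrary degree-$d$ monomial $X_T = \prod_{(i,j)\in T}X_{i,j}$, where $T$ is a multiset in $\{1,\dots,n\}\times\{1,\dots,n\}$ of size $d$. The ``square'', ``row'', and ``column'' quadrics force $[X_T]=0$ in $S/I$ unless $T$ is a \emph{matching}: its row- and column-projections are both injections onto sets $I_T, J_T \subseteq \{1,\dots,n\}$. For such a $T$, the mixed relation lets me swap the images of two rows of the bijection $I_T \to J_T$ determined by $T$, at the cost of a sign for $\det_n$ and with no change for $\perm_n$. Thus $[X_T]$ reduces, up to sign, to $[X_{T_{I,J}}]$ for a canonical $T_{I,J}$ pairing the rows of $I$ with the columns of $J$ in their natural order; this gives a spanning set of $(S/I)_d$ of size $\binom{n}{d}^2$.

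The main obstacle will be the well-definedness of the reduction in stage (iii): iteratively applying the mixed relation to the bijection $I_T \to J_T$ must terminate at a canonical representative whose overall sign depends only on the starting bijection. This is essentially the assertion that the sign homomorphism on $\bS_d$ is well defined, which I would formalize by encoding a matching as a permutation in $\bS_d$ and invoking the standard factorization into transpositions. A cleaner alternative is a Gr\"obner-basis computation: verify that the listed quadrics form a Gr\"obner basis under a suitable term order, and read off the standard monomials as precisely the $X_{T_{I,J}}$.
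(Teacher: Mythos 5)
Your proposal is correct and self-contained. Note that the paper does not actually prove this statement --- it is quoted from Shafiei with a citation --- so there is no in-paper argument to compare against; but your three-stage plan (verify the quadrics annihilate $\det_n$ and $\perm_n$ via multilinearity and the two-row Laplace expansions; match $\dim (f^\perp)_2 = \binom{n^2+1}{2} - \binom{n}{2}^2 = \binom{n+1}{2}^2$ against the count of listed quadrics; then bound $\dim(S/I)_d$ above by $\binom{n}{d}^2$ and conclude $I = f^\perp$ from the surjection $S/I \twoheadrightarrow S/f^\perp$ and \cref{thm:dim-S/det}) is sound and is essentially the natural route. One remark: the ``main obstacle'' you flag in stage (iii) is not actually an obstacle. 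For the spanning argument you only need that \emph{some} chain of swaps reduces each matching monomial $[X_T]$ to $\pm[X_{T_{I,J}}]$; you do not need the resulting sign to be independent of the chosen chain, because the lower bound $\dim(S/I)_d \ge \binom{n}{d}^2$ comes for free from the surjection onto $S/f^\perp$ rather than from exhibiting $\binom{n}{d}^2$ linearly independent classes in $S/I$ directly. So the well-definedness of the sign (equivalently, of the sign homomorphism on $\bS_d$) can be dispensed with entirely, and the Gr\"obner-basis alternative is likewise unnecessary.
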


\subsection{Relations}
The main goal of this paper is to describe $F_2$ and $F_2'$, the relations between these generators. Elements in $F_1 = \bigoplus_j S(-j)^{\beta_{2,j}}$ and $F_1'$  may be thought of as formal $S$-linear combinations of the generators of $\det_n^\perp$ or $\perm_n^\perp$, and we will write them as such: e.g.
\begin{equation*}
X_{1,2} (X_{2,1}^2) + X_{1,1} (X_{2,1} X_{2,2}) - X_{2,1} (X_{1,1} X_{2,2} + X_{1,2} X_{2,1})
\end{equation*}
If the $S$-coefficients all have degree $1$, we call the relation \emph{linear}, and if the $S$-coefficients have degree $2$, it is \emph{quadratic}.

\subsection{Gradings} \label{S:gradings}
There are three gradings of $S = \bk[X_{i,j}]$ that will be important in this paper:
\begin{itemize}
	\item {\it standard grading}: This is the grading by $\mathbb Z$ where each $X_{i,j}$ has degree $1$.
	\item {\it multigrading}:  This is the grading by $\mathbb Z^n \times \mathbb Z^n$, where $X_{i,j}$ has degree $e_i + f_j$, $i,j \in \{1, \dotsc, n\}$, where $e_i$ and $f_j$ are the standard basis elements of each copy of $\mathbb Z^n$.
	\item {\it monomial grading}: This is the grading by $\mathbb Z^{n \times n}$ where $X_{i,j}$ has degree $e_{i,j}$.  We will sometimes write monomial degrees in the form of a matrix, in the obvious way, by writing the coefficient of $e_{i,j}$ in position $(i,j)$. 
\end{itemize}
Each of these gradings is strictly finer than the last: any element that is homogeneous with respect to monomial degree is homogeneous with respect to multidegree, and similarly for multidegree and standard degree. For example, the polynomial $X_{1,1}^2 X_{1,2} X_{2,2}$ in $\bk[X_{1,1}, \dotsc, X_{2,2}]$ has standard degree $4$, multidegree $3e_1 + e_2 + 2f_1 + 2f_2 = \big( (3,1), (2,2) \big)$, and monomial degree $2e_{1,1} + e_{1,2} + e_{2,2}$ or equivalently $\begin{bmatrix} 2 & 1 \\ & 1 \end{bmatrix}$ in matrix notation.

We can extend these gradings to $F_1$ and $F_1'$: the degree of an element $f \cdot (g)$, where $f \in S$ and $g$ is a generator of $\det_n^\perp$ or $\perm_n^\perp$, is the sum of the degrees of $f$ and $g$. Note that ``linear'' elements of $F_1$ and $F_1'$ actually have standard degree $2+1 = 3$ in this sense, and ``quadratic'' elements have standard degree $4$, since all generators $g$ of $\det_n^\perp$ and $\perm_n^\perp$ have standard degree $2$.

Our proof of \Cref{main-theorem} is divided into two cases according to the following definition:

\begin{definition} \label{D:singular-plural}
If a multidegree is a tuple consisting only of $0$'s and $1$'s, we will call it \emph{singular}; otherwise, it is \emph{plural}.
\end{definition}

\subsection{Symmetries} \label{sec:symmetries}
The symmetries of the determinant and permanent will play an important role in this paper.  The determinant is invariant (up to scaling) under multiplying the $n \times n$ matrix $\bx$ of indeterminates on the left and right by any two $n \times n$ matrices and under transposing.  It is a theorem of Frobenius \cite{frobenius} that these are all the symmetries.  That is,  if we consider $M_n(\bk) = V \tensor W$ where $V$ and $W$ are $n$-dimensional vectors spaces over $\bk$, then $\GL(M_n(\bk))$ acts on the vector space $R_n = \Sym^n M_n(\bk)^{\vee}$ and the stabilizer of $\det_n$ viewed as an element in the projective space $\PP(R_n)$ is
$$G_{\det_n} = (\GL(V) \times \GL(W))/\bk^* \rtimes \ZZ/2,$$
where $\bk^* \subset \GL(V) \times \GL(W)$ is the subgroup consisting of pairs $(\alpha I_n, \alpha^{-1} I_n)$ for $\alpha \in \bk^*$ (and where $I_n$ denotes the identity matrix).  An element $(A,B)$ in the first factor of $G_{\det_n}$ acts on $M_n(\bk)$ via $M \mapsto AMB^{\top}$ and the non-identity element in the second factor acts via transposition $M \mapsto M^{\top}$.

Similarly, the permanent is invariant (up to scaling) under transposing, permuting the rows and columns, and multiplying the rows and columns by non-zero scalars --- these are all of the symmetries \cite{marcus-may}.  That is, if we let $T_V \subset \GL(V)$ (resp. $T_W \subset \GL(W)$) be the subgroup of diagonal matrices and $N(T_V)$ (resp., $N(T_W)$) be its normalizer, then the stabilizer of $\perm_n \in \PP(R_n)$ is
$$G_{\perm_n} = (N(T_V) \times N(T_W))/\bk^* \rtimes \ZZ/2,$$
where elements act in a similar fashion to the determinant.

The symmetries of transposing and permuting rows and columns will be particularly important to us; we think of the latter of these as a group action of $\bS_n \times \bS_n$.

Observe that Shafiei's lists of generators for $\det_n^\perp$ and $\perm_n^\perp$ (\cref{thm:Shafiei}) are setwise invariant under transposing as well as under permuting rows and columns, and also that every generator is homogeneous with respect to the standard grading and multigrading. These properties are also inherited by the relations.

The symmetries allow us to write the multidegrees and monomial degrees of syzygies more concisely. Since the space of syzygies is symmetric under permuting rows and columns, the order of the entries in the two $n$-tuples comprising a multidegree is somewhat irrelevant. We can therefore think of the multidegree of a syzygy as a pair of partitions of the integer $m$, where $m$ is the syzygy's standard degree.  For instance, the multidegree of $X_{1,1}^2 X_{1,2} X_{2,2}$ corresponds to the pair of partitions $(3+1, \ 2+2)$.

\subsection{Outline of the proof of \cref{main-theorem}}  \label{sec:outline}

In \cref{sec:singular-multidegree}, we show that all relations of $\det_n^{\perp}$ of singular multidegree are generated by linear relations (\cref{thm:linear-generate-all-singular}).  To establish this, we identify relations of a fixed singular multidegree with certain $\bk$-labelings of the Cayley graph of the symmetric group $\bS_m$ and then study the combinatorics of the Cayley graph.  
In \cref{sec:plural-multidegree}, we show that all relations $\det_n^{\perp}$ of plural multidegree are generated by linear relations.  This allows us finish the proof of \cref{main-theorem}\ref{main-theorem-a}.

In \cref{sec:permanent}, we perform the necessary adjustments to \cref{sec:singular-multidegree,sec:plural-multidegree} to characterize the module of relations of $\perm_n^{\perp}$ and thus establishing \cref{main-theorem}\ref{main-theorem-a}.  Unlike for the determinant, both linear and quadratic relations are needed to generate all relations of the permanent.

\section{Singular multidegree case} \label{sec:singular-multidegree}

In this section, we begin our investigation of the space of relations of the minimal quadratic generators of $\det_n^{\perp}$ as listed in \cref{thm:Shafiei} by focusing on relations of singular multidegree.  The main result is \cref{thm:linear-generate-all-singular} which asserts that all relations of singular multidegree are generated by linear relations.  This theorem is established as follows.  First, we identify monomials in $S = \bk[X_{i,j}]$ of standard degree $m$ and of fixed multidegree with permutations in $\bS_m$; see \cref{sec:singular-permuations}.  Using the Cayley graph $\Gamma(\bS_m)$ of the symmetric group $\bS_m$, we identify the space of relations of standard degree $m$ and of fixed multidegree with the space of certain $\bk$-labelings called {\it zero-magic labelings} (see \Cref{D:zero-magic}) on $\Gamma(\bS_m)$ (\cref{lem:relations}).  By a result of Doob (\cref{thm:Doob}), the space of zero-magic labelings is spanned by {\it cycle labelings} (see \cref{D:cycle-labeling}).  Finally, by studying the combinatorics of the Cayley graph, we show that any cycle labeling of $\Gamma(\bS_m)$ is the sum of certain commutator labelings corresponding to linear relations (\cref{thm:commutators-generate-all-cycles}). 

\subsection{Singular multidegree and permutations} \label{sec:singular-permuations}
Recall from \Cref{D:singular-plural} that a multidegree is \emph{singular} if it is a tuple of only 0s and 1s. Firstly, observe that the generators $(X_{i,j}^2)$, $(X_{i,j} X_{i,k})$ and $(X_{i,j} X_{k,j})$ each contribute $2$ to the multidegree in some row or column, so elements with singular multidegree can only involve the generator $(X_{i,j} X_{k,l} + X_{i,l} X_{k,j})$. Secondly, if a monomial in $S$ of standard degree $m$ has singular multidegree $e_{i_1} + \cdots + e_{i_m} + f_{j_1} + \cdots + f_{j_m}$, then the monomial necessarily has the form $X_{i_1, j_{\sigma(1)}} \cdots X_{i_m, j_{\sigma(m)}} $ for some permutation $\sigma \in \bS_m$.  In other words, if we fix a singular multidegree with standard degree $m$, there is a one-to-one correspondence between monomials in $S$ with this multidegree (up to scaling) and elements of the symmetric group $\bS_m$.

\subsection{Cayley graphs}
To progress further with this train of thought, we must first define a certain graph.

\begin{definition}
The \emph{Cayley graph} of a group $A$ together with a set of generators $G$ is the directed graph whose vertex set is $A$, with a directed edge from $a$ to $a g$ for each $a \in A$ and each generator $g \in G$.
\end{definition}

\begin{remark}
Any Cayley graph is connected. Indeed, for vertices $a_1, a_2 \in A$, the element $a_1^{-1} a_2$ must be expressible as $g_1 \dotsm g_r$ for some $g_1, \dotsc, g_r \in G$, since $G$ is a generating set. Therefore $g_1, \dotsc, g_r$ describe a path between $a_1$ and $a_1 (a_1^{-1} a_2) = a_2$.
\end{remark}

The symmetric group $\bS_m$ is generated by the set of transpositions, that is, the permutations of the form $(i \ j)$ for distinct $i,j \in \{1, \dotsc, m\}$. Therefore we may construct the Cayley graph of $\bS_m$ with this generating set. In this case, since every transposition is its own inverse, each directed edge between vertices $a$ and $a'$ given by a transposition $\tau$ has a matching edge from $a'$ to $a$ given by $\tau^{-1} = \tau$, so we will instead consider the simpler \emph{undirected Cayley graph of $\bS_m$} made by merging each of these pairs of edges into a single, undirected edge. Call this graph $\Gamma(\bS_m)$.

\begin{lemma}
The graph $\Gamma(\bS_m)$ is bipartite. 
\end{lemma}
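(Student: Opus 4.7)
The plan is to exhibit an explicit bipartition of the vertex set $\bS_m$ by using the sign homomorphism $\sgn \co \bS_m \to \{\pm 1\}$. Partition the vertex set into the alternating subgroup $A_m$ (the even permutations) and its complement $\bS_m \setminus A_m$ (the odd permutations).

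To check that this is a valid bipartition, consider any edge of $\Gamma(\bS_m)$. By construction, such an edge joins vertices $\sigma$ and $\sigma \tau$ for some transposition $\tau \in \bS_m$. Since $\sgn(\tau) = -1$, we have
\[
\sgn(\sigma \tau) = \sgn(\sigma) \cdot \sgn(\tau) = -\sgn(\sigma),
\]
so the two endpoints of every edge lie on opposite sides of the partition. Hence $\Gamma(\bS_m)$ is bipartite.

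There is essentially no obstacle here: the only thing being used is that every generator in the chosen generating set of $\bS_m$ is odd, so walking along an edge flips the parity. Equivalently, every closed walk in $\Gamma(\bS_m)$ has even length (since the product of the edge labels is the identity, which is even), which is the cycle-based characterization of bipartiteness.
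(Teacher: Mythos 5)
Your proof is correct and uses exactly the same idea as the paper: the bipartition by sign of the permutation, with the observation that each edge corresponds to multiplication by a transposition and hence flips the sign. The paper's version is just a one-line summary of your argument.
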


\begin{proof}
The vertices with odd and even sign as permutations form a bipartition, since every edge necessarily connects a vertex with odd sign to one with even sign.
\end{proof}

Let us now link this back to the apolar ideal. For a fixed singular multidegree $\mu$ of standard degree $m$, we already saw that the set of monomials (up to scaling) with multidegree $\mu$ is in bijection with $\bS_m$. Consider an element of the form $f \cdot (X_{i,j} X_{k,l} + X_{i,l} X_{k,j}) \in F_1$ with multidegree $\mu$ where $f \in S$ is a monomial. Let $\sigma_1, \sigma_2 \in \bS_m$ be the permutations corresponding to the monomials $f X_{i,j} X_{k,l} \in S$ and $f X_{i,l} X_{k,j} \in S$ under the above bijection.  
Moreover, the monomial degrees of $f X_{i,j} X_{k,l}$ and $f X_{i,l} X_{k,j}$ differ by a transposition, namely $(j \ l)$.  Thus we associate $f \cdot (X_{i,j} X_{k,l} + X_{i,l} X_{k,j})$ to the edge $(j \ l)$ between  $\sigma_1$ and  $\sigma_2$ in the Cayley graph $\Gamma(\bS_m)$.

To specify an element of $F_1$ of multidegree $\mu$, we must specify only the $S$-coefficients of generators of the form $(X_{i,j} X_{k,l} + X_{i,l} X_{k,j})$ or, equivalently, the $\bk$-coefficients of terms of the form $f \cdot (X_{i,j} X_{k,l} + X_{i,l} X_{k,j})$ where $f \in S$ is a monomial.  This in turn precisely corresponds to a $\bk$-labeling of the edges of $\Gamma(\bS_m)$.  The condition that the element is a relation, i.e., an element of $\ker(F_1 \xrightarrow{d_1} F_0)$, is that for each monomial in $S$, the sum of the coefficients of the terms in $F_1$ involving it in the relation is zero.  In the Cayley graph interpretation, this equivalently means that at every vertex $v$, the sum of the labels of the edges that meet $v$ is zero.  This last condition on a graph is important enough to warrant its own definition:

\begin{definition} \label{D:zero-magic}
A $\bk$-edge-labeling of a graph is called \emph{zero-magic} if for every vertex, the sum of the labels of the edges meeting this vertex is zero.
\end{definition}

We give the set of edge labelings of $\Gamma(\mathbf S_m)$ a $\Bbbk$-vector space structure in the obvious way, by making addition and scalar multiplication act edgewise. In this way, the set of zero-magic labelings forms a subspace.

To summarize the above discussion, we have:

\begin{lemma} \label{lem:relations}
For a singular multidegree $\mu$ of standard degree $m$, the space of relations of multidegree $\mu$ is isomorphic to the vector space of zero-magic labelings of $\Gamma(\bS_m).$ \epf
\end{lemma}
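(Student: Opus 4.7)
The plan is to formalize the correspondence sketched in the paragraphs preceding the lemma: identify the $\bk$-vector space $(F_1)_\mu$ with the space of $\bk$-edge-labelings of $\Gamma(\bS_m)$, and then verify that the differential $d_1$ translates the relation condition into the zero-magic condition at each vertex.

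First I would cut down the list of possible summands. The three Shafiei generators $X_{i,j}^2$, $X_{i,j}X_{i,k}$ and $X_{i,j}X_{k,j}$ each have a repeated row or column index, so every $S$-multiple of them has a plural multidegree and cannot appear in $(F_1)_\mu$. Hence every element of $(F_1)_\mu$ is a unique $\bk$-linear combination of terms of the form $f\cdot(X_{i,j}X_{k,l}+X_{i,l}X_{k,j})$ with $f$ a monomial and whose overall multidegree is exactly $\mu$.

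Next, writing the row and column indices appearing in $\mu$ as $i_1,\dots,i_m$ and $j_1,\dots,j_m$ (all distinct by singularity), each monomial of multidegree $\mu$ takes the form $X_{i_1,j_{\sigma(1)}}\cdots X_{i_m,j_{\sigma(m)}}$ for a unique $\sigma\in\bS_m$; this is the bijection between such monomials and vertices of $\Gamma(\bS_m)$. The two summands of a generator term $f\cdot(X_{i,j}X_{k,l}+X_{i,l}X_{k,j})$ give two permutations $\sigma_1$ and $\sigma_2$ differing by the transposition that swaps the positions mapping to $j$ and $l$, and so span an edge of $\Gamma(\bS_m)$. Sending each term to its $\bk$-coefficient, viewed as a label on the corresponding edge, produces a $\bk$-linear map from $(F_1)_\mu$ to the space of edge labelings of $\Gamma(\bS_m)$. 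Its inverse reconstructs $i,j,k,l$ and $f$ from the two monomials at the endpoints of a given edge; this is well-defined because the generator is symmetric under both $j\leftrightarrow l$ and $i\leftrightarrow k$, matching the fact that the edge is undirected.

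Finally, applying $d_1$ to $f\cdot(X_{i,j}X_{k,l}+X_{i,l}X_{k,j})$ yields $fX_{i,j}X_{k,l}+fX_{i,l}X_{k,j}$, placing the same scalar on each of the two monomials indexed by the endpoints of the associated edge. Therefore, in the image $d_1\xi$ of an arbitrary $\xi\in(F_1)_\mu$, the coefficient of the monomial at a vertex $\sigma$ is exactly the sum of the labels on the edges incident to $\sigma$; the condition $d_1\xi=0$ is thus the zero-magic condition and the claimed isomorphism follows. The one step demanding care is the bookkeeping in the third paragraph, confirming that the parametrization of edges by data $(f,i,j,k,l)$ is well-defined modulo the symmetries built into the generator; the remainder is a direct unpacking of definitions.
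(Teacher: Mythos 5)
Your proposal is correct and follows exactly the route the paper takes: the lemma is stated there as a summary of the immediately preceding discussion (only the symmetric generators survive in singular multidegree, monomials of multidegree $\mu$ biject with $\bS_m$, terms $f\cdot(X_{i,j}X_{k,l}+X_{i,l}X_{k,j})$ biject with edges of $\Gamma(\bS_m)$, and $d_1\xi=0$ becomes the zero-magic condition). Your write-up simply makes that discussion explicit, including the well-definedness check under the symmetries of the generator, which the paper leaves implicit.
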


\begin{definition} \label{D:cycle-labeling}
Given a bipartite graph $G$ and a cycle $C \subseteq G$ with a distinguished edge $e$, we define the \emph{cycle labeling} $\Lambda(C, e)$ to be the $\bk$-labeling where every edge outside $C$ is labeled $0$, and the edges along $C$ are given the alternating labels $1$ and $-1$, starting with the label $1$ for $e$. Since $G$ is bipartite, all cycles have even length, so this definition makes sense.  Also, we allow a cycle to travel along the same edge multiple times; if this occurs, the labels of such an edge are added together.
\end{definition}

Every cycle labeling is a zero-magic labeling. However, Doob established a far stronger result:

\begin{theorem}{\rm \cite[Prop.~2.4]{doob}} \label{thm:Doob}
Let $G$ be a connected bipartite graph. The vector space of zero-magic labelings of $G$ is spanned by the cycle labelings.

Moreover, if $T$ is a spanning tree of $G$, adding any single edge from $E(G) \setminus E(T)$ to $T$ must introduce a cycle; if we pick one cycle $C_e \subseteq T \cup \{e\}$ for each edge $e \in E(G) \setminus E(T)$, then the cycle labelings $\Lambda(C_e, e)$ form a basis for the vector space of zero-magic labelings.
\end{theorem}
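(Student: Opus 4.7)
The plan is to prove the ``moreover'' clause directly, as it implies the spanning assertion. Fix a spanning tree $T$ of $G$. For each non-tree edge $e \in E(G) \setminus E(T)$, the graph $T \cup \{e\}$ contains a unique simple cycle $C_e$, the \emph{fundamental cycle} of $e$ with respect to $T$. I would first verify that each $\Lambda(C_e, e)$ is zero-magic: bipartiteness forces every cycle to have even length, so the alternating assignment of $\pm 1$ around $C_e$ is consistent; at each vertex $v \in C_e$ the two edges of $C_e$ meeting $v$ receive opposite signs and all other edges meeting $v$ are labeled zero, so the sum is zero.

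For linear independence, the key observation is that $C_e \subseteq T \cup \{e\}$, so the only non-tree edge on which $\Lambda(C_e, e)$ is nonzero is $e$ itself, where it takes the value $+1$. Restricting any combination $\sum_e c_e \Lambda(C_e, e)$ to $E(G) \setminus E(T)$ therefore produces the assignment $e \mapsto c_e$, and vanishing of the combination forces every $c_e = 0$.

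For spanning, given any zero-magic labeling $\lambda$, set $c_e := \lambda(e)$ for each $e \in E(G) \setminus E(T)$ and consider $\lambda' := \lambda - \sum_e c_e \Lambda(C_e, e)$. This is zero-magic as a difference of zero-magic labelings, and by construction vanishes on every non-tree edge, so it is supported on $T$. It then remains to show that any zero-magic labeling supported on a tree is identically zero, which I would prove by induction on $|V(T)|$: a nontrivial finite tree has a leaf $v$, and since $v$ has degree $1$ in $T$ while $\lambda'$ vanishes off $T$, only the unique edge $f \in T$ incident to $v$ can carry a nonzero label, forcing $\lambda'(f) = 0$ by the zero-magic condition at $v$. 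Deleting $v$ and $f$ yields a smaller tree on which the restriction of $\lambda'$ is still zero-magic, and induction closes the argument.

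The proof is essentially routine and I do not expect a serious obstacle. The only subtle point is the well-definedness of the alternating sign assignment in $\Lambda(C_e, e)$, which is precisely where the bipartite hypothesis is essential; without it, a cycle could have odd length and the signs would fail to close up consistently, and cycle labelings would not in general be zero-magic.
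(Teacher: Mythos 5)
Your proof is correct. The paper does not actually prove this statement --- it is quoted from Doob's paper \cite[Prop.~2.4]{doob} and used as a black box --- so there is no internal argument to compare against; what you give is the standard ``fundamental cycle basis'' argument for the cycle space of a graph, specialized to edge labelings, and it is complete: the verification that each $\Lambda(C_e,e)$ is zero-magic, the linear independence via restriction to the non-tree edges, the reduction of an arbitrary zero-magic labeling to one supported on $T$, and the leaf-stripping induction showing a zero-magic labeling supported on a tree vanishes are all sound and characteristic-independent. You correctly identify that bipartiteness enters only to make the alternating $\pm 1$ assignment close up consistently around each cycle. The one cosmetic point worth noting is that the paper's notion of ``cycle'' allows closed walks repeating edges (with labels added), but since the fundamental cycles $C_e \subseteq T \cup \{e\}$ are simple, your reading is the intended one and the spanning statement for the larger class of cycle labelings follows a fortiori.
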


\begin{remark}  In particular, the dimension of the vector space of zero-magic labelings is $\left| E(G) \setminus E(T) \right|$, which is often called the \emph{circuit rank} of $G$.
\end{remark}

\subsection{Commutator cycles}
\Cref{thm:Doob} allows us to restrict our attention to only those relations of singular multidegree that correspond to cycle labelings in $\Gamma(\bS_m)$. These have a simple description in terms of permutations: a cycle in $\Gamma(\bS_m)$ may be specified by a starting vertex and a sequence of transpositions in $\bS_m$ that compose to give the identity permutation. Since Cayley graphs are clearly vertex-transitive, any sequence of transpositions that compose to the identity may define a cycle at any starting vertex, so we will often ignore the datum of the starting point.

We now define a specific class of cycles in $\Gamma(\bS_m)$.

\begin{definition}
Let $i,j,k \in \{1, \dotsc, m\}$ be distinct. We have the following composition of transpositions:
\begin{equation*}
(i \ j) (i \ k) (i \ j) (j \ k) = (1).
\end{equation*}
This sequence of transpositions defines a length-4 cycle in $\Gamma(\bS_m)$, given a starting vertex. A cycle of this form is called a \emph{commutator cycle}, and a cycle labeling built on this cycle is called a \emph{commutator labeling}. 
\end{definition}

\begin{remark}
Note that a length-$4$ cycle in $\Gamma(\bS_m)$ has a choice of four starting points and two directions, so
\begin{equation*} \label{eq:commutator-cycle-rotated}
(i \ k) (i \ j) (j \ k) (i \ j) = (1)
\end{equation*}
is also a commutator cycle. (The other six choices of starting point and direction can be made from these two sequences by interchanging $i$, $j$ and $k$.)
More abstractly, commutator cycles can be written
\begin{equation*}
a b a [aba] \quad \text{or} \quad a b [bab] b
\end{equation*}
for transpositions $a$ and $b$ that are distinct but not disjoint, where $[aba]$ means the single transposition that is $b$ conjugated by $a$.
\end{remark}

\begin{remark}
The prototypical commutator cycle is the cycle specified by the sequence
\begin{equation*}
(1 \ 2) (1 \ 3) (1 \ 2) (2 \ 3)
\end{equation*}
starting at the vertex $(1)$. In the correspondence between zero-magic labelings and relations, this corresponds to the linear relation
\begin{multline} \label{E:rhoS}
\rho_S = X_{3,3} (X_{1,1} X_{2,2} + X_{1,2} X_{2,1}) - X_{1,2} (X_{2,1} X_{3,3} + X_{2,3} X_{3,1}) \\ {} + X_{2,3} (X_{1,1} X_{3,2} + X_{1,2} X_{3,1}) - X_{1,1} (X_{2,2} X_{3,3} + X_{2,3} X_{3,2})
\end{multline}
when $m = 3$, and a monomial times this when $m > 3$. Note that for a fixed $n$ and singular multidegree, any commutator cycle of standard degree $m$ can be obtained from this cycle by renaming the indices, that is, by permuting rows and columns.
\end{remark}

\begin{proposition}
All cycle labelings on cycles of length $4$ in $\Gamma(\bS_m)$ are sums of commutator labelings, or zero.
\end{proposition}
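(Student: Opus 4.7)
The plan is to enumerate all length-$4$ closed walks in $\Gamma(\bS_m)$ and check the claim type by type. A walk based at $\sigma \in \bS_m$ is determined by a tuple of transpositions $(\tau_1,\tau_2,\tau_3,\tau_4)$ with $\tau_1\tau_2\tau_3\tau_4 = 1$, equivalently $\tau_1\tau_2 = \tau_4\tau_3$, and I would split the analysis by the relationship between $\tau_1$ and $\tau_2$. If $\tau_1=\tau_2$, then $\tau_3=\tau_4$ as well, the walk doubles back along two edges, and each edge receives weight $+1$ once and $-1$ once, so the labeling is zero. If $\tau_1 \ne \tau_2$ share an index, then $\tau_1\tau_2$ is a $3$-cycle admitting exactly three factorizations into transposition pairs; one gives a degenerate walk with zero labeling, while a direct index check identifies the other two with cyclic rotations of the commutator pattern $(ij)(ik)(ij)(jk)$, hence already commutator labelings.

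The essential remaining case is when $\tau_1 = (ij)$ and $\tau_2 = (kl)$ are disjoint. Then $\tau_1\tau_2$ is a double transposition admitting only the trivial transposition-factorization, so the walk is the honest $4$-cycle $\sigma \to \sigma(ij) \to \sigma(ij)(kl) \to \sigma(kl) \to \sigma$, which I will call the \emph{disjoint square}. Because it uses four distinct indices, the disjoint square is not itself a commutator cycle, and the hard part is to express its labeling as a sum of commutator labelings.

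To handle this case, I would exploit the group-theoretic fact that the disjoint commutation $(ij)(kl)(ij) = (kl)$ is already a consequence of the commutator relations. Concretely, substituting $(kl) = (ik)(il)(ik)$ (itself a commutator identity) into the middle of the word $(ij)(kl)(ij)$ and then applying the rewrite rule $(ij)(ih) = (jh)(ij)$ several times reduces $(ij)(kl)(ij)$ to $(kl)$ in a short sequence of elementary steps, each of which is a commutator relation rearranged. In the Cayley graph, each such step replaces a subpath between two fixed vertices by a different subpath between the same vertices, and the ``closed-up'' difference of the two subpaths is a single commutator $4$-cycle. Summing these commutator cycles with appropriate signs telescopes the intermediate subpaths away in pairs, leaving exactly the difference between the initial $3$-edge path $[(ij),(kl),(ij)]$ and the final $1$-edge path $[(kl)]$ from $\sigma$ to $\sigma(kl)$, which is precisely the disjoint square. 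The main obstacle will be the sign bookkeeping: verifying at each substitution step that the local path difference is a commutator cycle with the correct orientation and starting edge, so that the signed sum accumulates to the disjoint square labeling rather than a rescaling or cyclic shift thereof.
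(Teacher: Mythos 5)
Your proposal is correct, and for the first two cases (equal transpositions, and transpositions sharing an index) it coincides with the paper's argument exactly, down to the identification of the degenerate sub-cases that yield the zero labeling. (One very minor omission: in the disjoint case, $\tau_3\tau_4=(\tau_1\tau_2)^{-1}$ also admits the order-swapped factorization $\tau_3=(k\ l)$, $\tau_4=(i\ j)$, which gives a doubled-back walk and the zero labeling; you should dispose of it in one line before declaring the walk to be the honest square.) The genuine difference is in how the disjoint square is decomposed. The paper exhibits a static picture: an explicit subgraph of $\Gamma(\bS_m)$ whose outer boundary is the square $(i\ j)(k\ l)(i\ j)(k\ l)$ and whose five inner faces are commutator cycles, so that summing the five face labelings with matched signs cancels the interior edges. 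You instead rewrite the word $(i\ j)(k\ l)(i\ j)$ into $(k\ l)$ using only \emph{non-disjoint} commutations --- substituting $(k\ l)=(i\ k)(i\ l)(i\ k)$ and pushing the two copies of $(i\ j)$ together until they cancel --- with each rewrite contributing one commutator square to a telescoping sum. If one unwinds your telescoping, the five squares it produces (namely $(i\ k)(i\ l)(i\ k)(k\ l)$, two copies of the pattern $(i\ j)(i\ k)(i\ j)(j\ k)$ based at different vertices, $(i\ j)(i\ l)(i\ j)(j\ l)$, and the leftover walk $(j\ k)(j\ l)(j\ k)(k\ l)$, which is itself a commutator cycle) are precisely the five inner faces of the paper's diagram, so the two proofs have identical content. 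What your version buys is that the sign bookkeeping you worry about becomes mechanical --- each step changes the path labeling by exactly one commutator labeling, with sign fixed by matching on a single shared edge, and bipartiteness guarantees the intermediate length-six closed walks carry consistent alternating labels --- and it foreshadows the rewriting machinery the paper only introduces afterwards for \cref{thm:commutators-generate-all-cycles}. The one thing you must state explicitly is that the disjoint commutation rule itself is never invoked in the rewriting, since its validity is exactly what is being proved; your chosen substitution does avoid it, so there is no circularity.
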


\begin{proof}
A cycle of length $4$ corresponds to a sequence of $4$ transpositions $a b c d$ that composes to the identity. This means that $cd = (ab)^{-1}$.  There are three possibilities for $a$ and $b$:
\begin{itemize}[leftmargin=*]
\item $a = b$. In this case we must have $c = d$, so the cycle labeling gives each of the edges $a = b$ and $c = d$ the label $1 - 1 = 0$, so this is the zero labeling.

\item $a$ and $b$ are distinct but not disjoint. Then $a = (i \ j)$, $b = (i \ k)$, for some $i,j,k$. Thus $ab = (i \ k \ j)$, so $cd = (i \ j \ k)$; this means one of the following:
\begin{itemize}
\item $c = (i \ j)$ and $d = (j \ k)$. Then $abcd = (i \ j) (i \ k) (i \ j) (j \ k)$, a commutator cycle of the first type.
\item $c = (j \ k)$ and $d = (i \ k)$. Then $abcd = (i \ j) (i \ k) (j \ k) (i \ k)$, which is a commutator cycle of the second type.
\item $c = (i \ k)$ and $d = (i \ j)$. Then $abcd = (i \ j) (i \ k) (i \ k) (i \ j)$, so the cycle goes along the same two edges forwards then backwards. Thus the labeling cancels to zero on both edges, so it is the zero labeling.
\end{itemize}

\item $a$ and $b$ are disjoint. Say $a = (i \ j)$ and $b = (k \ l)$. Then either $c = (k \ l)$ and $d = (i \ j)$, in which case the cycle goes along the same edges forwards and backwards and thus cancels to zero; or $c = (i \ j)$ and $d = (k \ l)$, so $abcd = (i \ j) (k \ l) (i \ j) (k \ l)$. Consider the following subgraph of $\Gamma(\bS_m)$:
$$\xymatrix{
(1) \ar@{-}[rrr]^{(i \ j)} \ar@{-}[ddd]_{(k \ l)}  \ar@{-}[rd]_{(j \ k)}	& 				&				& (i \ j)  \ar@{-}[ddd]^{(k \ l)}  \ar@{-}[ld]^{(i \ k)} \\
					& (j \ k) \ar@{-}[r]^{(i \ j)} 	\ar@{-}[d]^{(j \ l)} 		& (i \ k \ j)	\ar@{-}[d]^{(i\ l)} 	\\
					& (j \ l \ k)	\ar@{-}[r]^{(i \ j)} 		& (i \ l \ k \ j)	\\	
(k \ l) \ar@{-}[rrr]_{(i \ j)} \ar@{-}[ru]^{(j \ k)} 	& 				&				& (i \ j)(k \ l) \ar@{-}[ul]_{(i \ k)} 
}$$

The outer square is $abcd$, but each of the five inner squares is a commutator cycle. By choosing signs appropriately, we may add together the labelings given by these five inner commutator cycles so that the labels on the internal edges cancel, leaving only the cycle labeling of the outer square. Hence the cycle labeling for $(i \ j) (k \ l) (i \ j) (k \ l)$ is a sum of commutator labelings. (We assumed that $abcd$ started at the vertex $(1)$ in this diagram, but vertex-transitivity means that we equally could have started at any vertex.) \qedhere
\end{itemize}
\end{proof}

\subsection{Commutation rules}
A consequence of this proposition is that we obtain commutation rules for transpositions. Suppose we have some loop containing adjacent edges $a$ and $b$. If $a = b$, the labels on this edge sum to zero, so we can effectively cancel these two transpositions with each other. If $a$ and $b$ are distinct but not disjoint, then we may commute the edges following the rule $ab = [aba] a = b [bab]$ in the cycle, by adding the commutator labelings given by the cycles $aba[aba]$ or $ab[bab]b$, with the right choice of sign:
\begin{equation*}
\begin{tikzpicture}[vertex/.style={circle, inner sep=1.8pt, fill}, auto, on grid]
\node (left dots) {$\dots$};
\node [vertex] (sigma) [right=of left dots] {};
\node [vertex] (a sigma) [above right=of sigma] {};
\node [vertex] (ab sigma) [below right=of a sigma] {};
\node [vertex] (aba sigma) [below right=of sigma] {};
\node (right dots) [right=of ab sigma] {$\dots$};
\draw (left dots) to (sigma);
\draw (sigma) to node {$a$} (a sigma) to node {$b$} (ab sigma);
\draw (ab sigma) to (right dots);
\draw [dashed] (sigma) to node [swap] {$aba$} (aba sigma) to node [swap] {$a$} (ab sigma);
\node [below=of a sigma] {$\downarrow$};
\end{tikzpicture}
\qquad
\begin{tikzpicture}[vertex/.style={circle, inner sep=1.8pt, fill}, auto, on grid]
\node (left dots) {$\dots$};
\node [vertex] (sigma) [right=of left dots] {};
\node [vertex] (a sigma) [above right=of sigma] {};
\node [vertex] (ab sigma) [below right=of a sigma] {};
\node [vertex] (b sigma) [below right=of sigma] {};
\node (right dots) [right=of ab sigma] {$\dots$};
\draw (left dots) to (sigma);
\draw (sigma) to node {$a$} (a sigma) to node {$b$} (ab sigma);
\draw (ab sigma) to (right dots);
\draw [dashed] (sigma) to node [swap] {$b$} (b sigma) to node [swap] {$bab$} (ab sigma);
\node [below=of a sigma] {$\downarrow$};
\end{tikzpicture}
\end{equation*}

And if $a$ and $b$ are disjoint, then we can commute $ab = ba$ by adding on the labeling of the length-$4$ cycle $abab$, which we just saw could be built from commutator cycles:
\begin{equation*}
\begin{tikzpicture}[vertex/.style={circle, inner sep=1.8pt, fill}, auto, on grid]
\node (left dots) {$\dots$};
\node [vertex] (sigma) [right=of left dots] {};
\node [vertex] (a sigma) [above right=of sigma] {};
\node [vertex] (ab sigma) [below right=of a sigma] {};
\node [vertex] (b sigma) [below right=of sigma] {};
\node (right dots) [right=of ab sigma] {$\dots$};
\draw (left dots) to (sigma);
\draw (sigma) to node {$a$} (a sigma) to node {$b$} (ab sigma);
\draw (ab sigma) to (right dots);
\draw [dashed] (sigma) to node [swap] {$b$} (b sigma) to node [swap] {$a$} (ab sigma);
\node [below=of a sigma] {$\downarrow$};
\end{tikzpicture}
\end{equation*}

We finally reach the following proposition:

\begin{proposition} \label{thm:commutators-generate-all-cycles}
Any cycle labeling in $\Gamma(\bS_m)$ is the sum of commutator labelings, or zero.
\end{proposition}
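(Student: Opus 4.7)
The plan is to proceed by induction on the length $2k$ of the cycle, with the base case $2k=4$ supplied by the preceding proposition. For the inductive step, given a cycle $C$ of length $2k \geq 6$ in $\Gamma(\bS_m)$ corresponding to a word $a_1 a_2 \cdots a_{2k} = 1$ in $\bS_m$, I would modify the cyclic word using the commutation rules just derived---tracking how the cycle labeling $\Lambda(C)$ changes modulo sums of commutator labelings---until two adjacent transpositions become equal. Cancellation then shortens the cycle by two, whereupon the inductive hypothesis finishes the argument. Three operations are available: disjoint commutation $ab \leftrightarrow ba$, which alters the labeling by a length-$4$ disjoint cycle labeling (a sum of commutator labelings by the preceding proposition); conjugation commutation $ab \leftrightarrow [aba]\,a$ for distinct non-disjoint $a,b$, which alters the labeling by exactly one commutator labeling; and cancellation of an adjacent pair $a_i = a_{i+1}$, which leaves the labeling on the remaining edges unchanged because the two cancelled contributions to the doubly-traversed edge already sum to zero in any cycle labeling.

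The crux of the inductive step is to show that a finite sequence of these operations always produces an adjacent pair of equal transpositions in the cyclic word. To establish this I would appeal to the structure of $\bS_m$ as a Coxeter group on the full set of transpositions, with relations $s^2 = 1$, $(st)^2 = 1$ for disjoint $s,t$, and $(st)^3 = 1$ for distinct non-disjoint $s,t$. Since $a_1 \cdots a_{2k} = 1$ with $2k \geq 2$ is a non-empty---and hence non-reduced---expression for the identity, the Coxeter word problem (in the form due to Tits and Matsumoto) guarantees that we may transform it to the empty word by a finite sequence of disjoint commutations, braid moves $aba \leftrightarrow bab$, and cancellations. Disjoint commutations and cancellations are already in our toolkit. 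A braid move $aba \to bab$ can be simulated by first applying the conjugation commutation to the leading $ab$ of $aba$, producing $[aba]\,a\,a$, and then cancelling the adjacent $aa$; this actually replaces $aba$ by the single transposition $[aba]$, an even stronger length-reducing step. Consequently our three operations suffice to shorten any such cycle, which at some intermediate stage must expose an adjacent pair of equal transpositions ready for cancellation.

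The main obstacle I anticipate is twofold: first, confirming rigorously that the restricted tool set above genuinely simulates every Coxeter move in every configuration that can arise---this is essentially a bookkeeping exercise with the explicit formulas from the commutation-rules discussion, combined with the classical word-problem reduction; and second, handling the cyclic rather than linear structure of the word, so that commutations are permitted across the wraparound from position $2k$ back to position $1$, and so that the signs contributed by successive modifications are correctly accounted for. Both points are technical but transparent given the explicit commutation rules, and should not obstruct the inductive argument.
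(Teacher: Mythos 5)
Your overall framework --- induct on the cycle length, rewrite the word of transpositions using the commutation rules while tracking the labeling modulo sums of commutator labelings, and cancel adjacent equal letters --- is sound, and the bookkeeping about how each move changes the labeling is correct. The gap is in your termination argument. The symmetric group $\bS_m$ is \emph{not} a Coxeter group on the full set of transpositions with the relations $s^2=1$, $(st)^2=1$ for disjoint $s,t$ and $(st)^3=1$ for distinct non-disjoint $s,t$: for $m\ge 3$ the group presented by exactly those relations is infinite (for $m=3$ it is the affine Coxeter group $\tilde A_2$, of which $\bS_3$ is a proper quotient). Consequently a word in the transpositions that equals the identity in $\bS_m$ need not equal the identity in that Coxeter group, and the Tits--Matsumoto solution of the word problem gives you nothing: it does not guarantee that such a word can be emptied by braid moves, disjoint commutations and cancellations. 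The prototypical commutator cycle $(1\ 2)(1\ 3)(1\ 2)(2\ 3)$ is already a counterexample --- it represents a nontrivial element of $\tilde A_2$, so no sequence of Coxeter moves reduces it to the empty word. Indeed the relation $(i\ j)(i\ k)(i\ j)=(j\ k)$ is precisely the relation \emph{missing} from the Coxeter-type presentation, and showing that adjoining it (i.e., allowing your conjugation commutation) suffices to reduce every identity word is the entire content of the proposition; it cannot be imported from Coxeter theory.

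The paper closes exactly this hole by inducting on $m$ rather than on cycle length: given $\tau_1\cdots\tau_s=(1)$, it repeatedly applies the commutation rules to the left-most transposition moving the letter $m$, which at each step either cancels against an equal neighbour or moves strictly closer to the right-hand end; since a word whose unique $m$-moving letter sits at the far right cannot multiply to the identity, all $m$-moving letters must eventually disappear, reducing the word to one in $\bS_{m-1}$, where the inductive hypothesis applies. If you wish to keep your induction on length, you need a comparable explicit monovariant proving that your rewriting process terminates in an adjacent cancellation; as written, that step is unsupported.
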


\begin{proof}
We need to show that any sequence of transpositions in $\bS_m$ that composes to the identity can be reduced to the identity using the commutation rules introduced above.
We proceed by induction on $m$. Observe that when $m = 2$, $\Gamma(\bS_2)$ only has one edge, between $(1)$ and $(1 \ 2)$, so all cycles simply travel along this edge backwards and forwards, and all cycle labelings cancel to $0$.

Now, let $m > 2$. Suppose we have some sequence
\begin{equation*}
\tau_1 \tau_2 \dotsm \tau_s = (1)
\end{equation*}
of transpositions in $\bS_m$. We describe how to reduce this to a sequence whose transpositions are all contained in the subgroup isomorphic to $\bS_{m-1}$ of permutations that fix $m$.
Every transposition either fixes $m$ or moves it. If all transpositions in our sequence fix $m$, we are done, so assume that some transpositions move $m$, and consider the left-most one of these: suppose it is $\tau_r = (i \ m)$. We want to commute this towards the right. If $\tau_r$ and $\tau_{r+1}$ are disjoint, use the rule
\begin{equation*}
(i \ m) (j \ k) = (j \ k) (i \ m).
\end{equation*}
If $\tau_r$ and $\tau_{r+1}$ are distinct but not disjoint and $\tau_{r+1}$ fixes $m$, use the rule
\begin{equation*}
(i \ m) (i \ j) = (i \ j) (j \ m).
\end{equation*}
If $\tau_r$ and $\tau_{r+1}$ are distinct but not disjoint and $\tau_{r+1}$ moves $m$, use the rule
\begin{equation*}
(i \ m) (j \ m) = (i \ j) (i \ m).
\end{equation*}
If $\tau_r$ and $\tau_{r+1}$ are equal, they cancel:
\begin{equation*}
(i \ m) (i \ m) = (1).
\end{equation*}
After applying any of these rules, the left-most $m$-moving transposition is strictly closer to the right-hand end of the sequence. Therefore if we repeat this process, we may continue for no more than $s-1$ steps, until either there are no more $m$-moving transpositions, or there is a single one and it is at the very right-hand end of the string. But this latter case is impossible: a sequence of the form
\begin{align*}
\tau_1 \dotsm \tau_{s'-1} \tau_{s'} = (1)
\end{align*}
where $\tau_1, \ldots, \tau_{s'-1}$ fix $m$, but $\tau_{s'}$ moves $m$, must as a whole move $m$, so it cannot equal $(1)$, and we have a contradiction. Therefore our algorithm must produce a sequence of transpositions where none involve $m$; that is, a string contained in $\bS_{m-1}$. But by the induction hypothesis, every loop in $\bS_{m-1}$ can be reduced to the identity by these commutator rules.
\end{proof}

\begin{theorem} \label{thm:linear-generate-all-singular}
All relations of singular multidegree are generated by linear relations and specifically by the orbit of $\rho_S$ (introduced in \eqref{E:rhoS}), under the symmetry action of permuting rows and columns. 
\end{theorem}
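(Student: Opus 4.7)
The plan is to synthesize the three main results already established in this section. Fix a singular multidegree $\mu$ of standard degree $m$. By \cref{lem:relations}, the space of relations of multidegree $\mu$ is isomorphic to the space of zero-magic labelings of $\Gamma(\bS_m)$. By Doob's theorem (\cref{thm:Doob}), this space is spanned by cycle labelings. By \cref{thm:commutators-generate-all-cycles}, every cycle labeling is in turn a $\bk$-linear combination of commutator labelings. Chaining these three identifications together, the space of relations of multidegree $\mu$ is spanned by the relations corresponding to commutator labelings.

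The remaining step is to check that each commutator labeling, under the correspondence of \cref{sec:singular-permuations}, corresponds precisely to an $S$-monomial multiple of a row/column-permuted copy of $\rho_S$. A commutator cycle in $\Gamma(\bS_m)$ involves only three of the $m$ positions, say $i,j,k$, and starts from some vertex $\sigma \in \bS_m$. The four vertices visited by the cycle agree on all positions outside $\{i,j,k\}$, so the associated element of $F_1$ factors as a common monomial $\prod_{\ell \notin \{i,j,k\}} X_{i_\ell, j_{\sigma(\ell)}}$ times a relation involving only three rows and three columns. This latter three-row, three-column relation has precisely the structure of $\rho_S$ after relabeling, so it lies in the $\bS_n \times \bS_n$-orbit of $\rho_S$. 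Since this holds for every commutator labeling and every singular $\mu$, the submodule of relations of singular multidegree is generated over $S$ by the orbit of $\rho_S$, yielding the theorem.

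The genuine difficulty of the argument has already been absorbed by \cref{thm:commutators-generate-all-cycles}; relative to that proposition, the present theorem is essentially a corollary obtained by unwinding definitions. The only care required in the final step is to verify that the alternating $\pm 1$ labels of a commutator cycle line up with the signs appearing in \eqref{E:rhoS}. This is a direct check against the definition of the monomial/permutation bijection of \cref{sec:singular-permuations}: the two edges labeled $+1$ correspond to a choice of the two monomials of one parity on the cycle, and the two $-1$ edges to those of the opposite parity, which (since $\Gamma(\bS_m)$ is bipartite) matches the sign pattern of the generators $X_{i,j}X_{k,l} + X_{i,l}X_{k,j}$ across the four edges.
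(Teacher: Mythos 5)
Your proposal is correct and follows essentially the same route as the paper: chain \cref{lem:relations}, \cref{thm:Doob}, and \cref{thm:commutators-generate-all-cycles}, then observe that every commutator labeling is an $S$-monomial multiple of a row/column-permuted copy of $\rho_S$. Your extra care in checking that the alternating $\pm 1$ edge labels match the signs in \eqref{E:rhoS} is a detail the paper leaves implicit, but it does not change the argument.
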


\begin{proof}
By \cref{lem:relations}, the vector space of relations of a fixed singular multidegree is isomorphic to the space of zero-magic labelings of $\Gamma(\bS_m)$. By \cref{thm:Doob}, the latter space is spanned by the cycle labelings, and by \cref{thm:commutators-generate-all-cycles}, every cycle labeling is a sum of commutator labelings. But every commutator labeling is an $S$-multiple of an element in the orbit of $\rho_S$ under $\bS_n \times \bS_n$.
\end{proof}

\begin{corollary} \label{cor:dimension-linear-relations}
The vector space of linear relations of a fixed singular multidegree has dimension $4$.
\end{corollary}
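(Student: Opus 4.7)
The plan is to apply \cref{lem:relations} and \cref{thm:Doob} directly with $m = 3$. The first step is to note that a linear relation must have standard degree $3$, since each minimal generator of $\det_n^\perp$ has standard degree $2$ and a linear $S$-coefficient contributes $1$; conversely, any relation of standard degree $3$ is automatically linear. Consequently, for a fixed singular multidegree $\mu$ (which must also have standard degree $3$ to admit any linear relation), the space of linear relations of multidegree $\mu$ coincides with the full space of relations of multidegree $\mu$, and \cref{lem:relations} identifies this space with the vector space of zero-magic labelings of $\Gamma(\bS_3)$.

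Next I would describe $\Gamma(\bS_3)$ explicitly: it has $|\bS_3|=6$ vertices, each of degree $\binom{3}{2}=3$, and hence $9$ edges. A short direct computation shows that every odd permutation in $\bS_3$ is joined by a transposition to every even permutation, so $\Gamma(\bS_3)$ is the complete bipartite graph $K_{3,3}$.

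Finally, the remark following \cref{thm:Doob} gives the dimension of the space of zero-magic labelings as the circuit rank of $K_{3,3}$, namely $|E|-|V|+1 = 9 - 6 + 1 = 4$, which yields the claimed dimension. There is essentially no obstacle here: the corollary reduces to \cref{lem:relations} and an elementary circuit-rank calculation for $K_{3,3}$, once one observes that, for a singular multidegree, the standard-degree-$3$ relations are exactly the linear ones.
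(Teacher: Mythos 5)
Your proposal is correct and follows essentially the same route as the paper: identify the linear relations of a fixed singular multidegree with the zero-magic labelings of $\Gamma(\bS_3)$ via \cref{lem:relations}, recognize $\Gamma(\bS_3)$ as $K_{3,3}$, and read off the circuit rank $9-6+1=4$ from \cref{thm:Doob}. Your explicit remark that for a singular multidegree the standard-degree-$3$ relations are exactly the linear ones is a small clarification the paper leaves implicit, but the argument is the same.
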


\begin{proof}
By \cref{lem:relations}, the vector space of linear relations of a fixed singular multidegree is isomorphic to the space of zero-magic labelings of $\Gamma(\bS_3)$. By \cref{thm:Doob}, the dimension of this space is the circuit rank of $\Gamma(\bS_3)$. But $\Gamma(\bS_3)$ is isomorphic to the complete bipartite graph $K_{3,3}$: it is a bipartite graph with $\left| \bS_3 \right| = 6$ vertices, and each vertex has degree $3$, since there are $\binom{3}{2} = 3$ transpositions in $\bS_3$. The circuit rank of $K_{3,3}$ is $4$. (The graph $\Gamma(\bS_3)$ is shown in \cref{fig:Gamma(S_3)}, with a spanning tree highlighted.)
\end{proof}

\begin{figure}
\caption{The graph $\Gamma(\bS_3)$. The thicker edges form a spanning tree.} \label{fig:Gamma(S_3)}
\centering
\includegraphics[keepaspectratio=true]{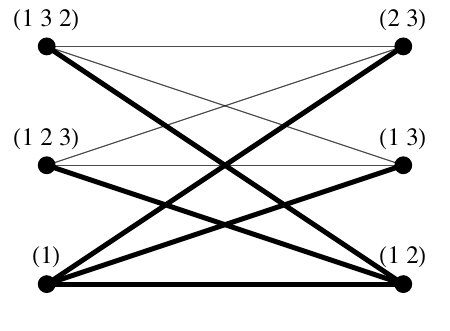}
\end{figure}

\section{Plural multidegree case} \label{sec:plural-multidegree}

In this section, we prove \cref{main-theorem}\ref{main-theorem-a} asserting that all relations of the apolar ideal $\det_n^{\perp}$ are generated by linear relations as long as $\chartext{\bk} \neq 2$.  Moreover, in \cref{thm:linear-relations-list}, a minimal generating set of linear relations is given.

In this section, we find it useful to describe relations using a `dots-and-boxes' notation as detailed in \cref{sec:dots-and-boxes}.  After determining a basis of the linear relations in \cref{thm:linear-relations-list}, we turn our attention to relations of standard degree $\ge 2$ and plural multidegree, since relations of singular multidegree were fully investigated in the previous section.  We first show that quadratic relations of plural multidegree are generated by linear relations (\cref{thm:linear-relations-generate-quadratic}) and then establish that all relations of plural multidegree are generated by linear relations (\cref{thm:linear-generate-all-plural}).

\subsection{Dots-and-boxes notation} \label{sec:dots-and-boxes}

It will be helpful to develop a shorthand notation to express the relations among the generators of $\det_n^{\perp}$ listed in  \cref{thm:Shafiei}. Recall the free resolution of $S/\det_n^{\perp}$ from \eqref{E:free-det} and that elements of $F_1$ are formal $S$-linear sums of the generators of $\det_n^\perp$. We can display this information pictorially in a matrix by showing the generator as a rectangle whose corners are at the locations of the variables that comprise it, and denoting monomials in $S$ with dots in the positions corresponding to the variables (or numbers instead of dots, to indicate multiplicity greater than $1$). We will only display the minimal submatrix in which all the variables appear. For example:
\begin{align*}
X_{2,1} (X_{1,2} X_{2,2}) & = \begin{relmatrix}{2}{2} \relgen{1}{2}{2}{2} \relcoeff{2}{1} \end{relmatrix} \\
X_{1,2} X_{3,3} (X_{1,1} X_{2,2} + X_{1,2} X_{2,1}) & = \begin{relmatrix}{3}{3} \relgen{1}{1}{2}{2} \relcoeff{1}{2} \relcoeff{3}{3} \end{relmatrix} \\
X_{2,2} X_{2,3} (X_{1,1}^2) - X_{1,1}^2 (X_{2,2} X_{2,3}) & = \begin{relmatrix}{2}{3} \relgen{1}{1}{1}{1} \relcoeff{2}{2} \relcoeff{2}{3} \end{relmatrix} - \begin{relmatrix}{2}{3} \relgen{2}{2}{2}{3} \relcoeff[2]{1}{1} \end{relmatrix}.
\end{align*}

\subsection{Linear relations}

\begin{proposition} \label{thm:linear-relations-list}
The space of linear relations is generated by the orbit of the following six relations under the symmetry of permuting rows and columns and transposing:
\begin{equation*} \label{E:linear-det}
\begin{aligned}
\rho_1 &= X_{1,2} (X_{1,1}^2) - X_{1,1} (X_{1,1} X_{1,2}) \\
\rho_2 &= X_{1,3} (X_{1,1} X_{1,2}) - X_{1,2} (X_{1,1} X_{1,3}) \\
\rho_3 &= X_{2,1} (X_{1,1} X_{1,2}) - X_{1,2} (X_{1,1} X_{2,1}) \\
\rho_4 &= X_{1,1} (X_{1,1} X_{2,2} + X_{1,2} X_{2,1}) - X_{2,1} (X_{1,1} X_{1,2}) - X_{2,2} (X_{1,1}^2) \\
\rho_5 &= X_{1,3} (X_{1,1} X_{2,2} + X_{1,2} X_{2,1}) - X_{2,2} (X_{1,1} X_{1,3}) - X_{2,1} (X_{1,2} X_{1,3}) \\
\rho_S &= \begin{multlined}[t] X_{3,3} (X_{1,1} X_{2,2} + X_{1,2} X_{2,1}) - X_{1,2} (X_{2,1} X_{3,3} + X_{2,3} X_{3,1}) \\[2.5pt] {} + X_{2,3} (X_{1,1} X_{3,2} + X_{1,2} X_{3,1}) - X_{1,1} (X_{2,2} X_{3,3} + X_{2,3} X_{3,2}).
\end{multlined}
\end{aligned}
\end{equation*}
\end{proposition}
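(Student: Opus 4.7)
The plan is to exploit that the space of linear relations is finely graded. Every linear relation has standard degree $3$, so the space of linear relations decomposes as a direct sum over multidegrees $\nu = \sum a_i e_i + \sum b_j f_j$ with $\sum a_i = \sum b_j = 3$. By the $\bS_n \times \bS_n$-action permuting rows and columns and the transposition symmetry (\cref{sec:symmetries}), both of which preserve the space of relations and act transitively on multidegrees of a given partition-pair type, it suffices to fix one representative multidegree for each type (viewed as an unordered pair of partitions of $3$): $\{(1,1,1),(1,1,1)\}$ (the singular type), together with the five plural types $\{(3),(3)\}$, $\{(3),(2,1)\}$, $\{(3),(1,1,1)\}$, $\{(2,1),(2,1)\}$, $\{(2,1),(1,1,1)\}$. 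The singular type is handled directly by \cref{thm:linear-generate-all-singular}, which shows that linear relations of singular multidegree are generated by the orbit of $\rho_S$.

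For each plural type I would fix a representative multidegree $\nu$, enumerate the minimal generators $g$ of $\det_n^{\perp}$ from \cref{thm:Shafiei} together with linear coefficients $c \in S_1$ such that $c \cdot g$ has multidegree $\nu$, and compute the kernel of the resulting $\bk$-linear map to the finite-dimensional vector space $S_\nu$ spanned by monomials of multidegree $\nu$. At $3e_1+3f_1$ there is one pair $X_{1,1}(X_{1,1}^2)$ and the kernel is trivial, so no $\rho$ is required. At $3e_1+2f_1+f_2$ two pairs $X_{1,2}(X_{1,1}^2)$ and $X_{1,1}(X_{1,1}X_{1,2})$ collapse onto the monomial $X_{1,1}^2X_{1,2}$, giving the $1$-dimensional kernel $\langle\rho_1\rangle$. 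At $3e_1+f_1+f_2+f_3$ three pairs map to a single monomial, producing a $2$-dimensional kernel spanned by the $\bS_3$-orbit of $\rho_2$ under column permutation.

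The two remaining cases are analogous but slightly larger. At $2e_1+e_2+2f_1+f_2$ there are four generator-coefficient pairs $X_{2,2}(X_{1,1}^2)$, $X_{2,1}(X_{1,1}X_{1,2})$, $X_{1,2}(X_{1,1}X_{2,1})$, $X_{1,1}(X_{1,1}X_{2,2}+X_{1,2}X_{2,1})$ and two target monomials $X_{1,1}^2X_{2,2}, X_{1,1}X_{1,2}X_{2,1}$, and the resulting $4\times 2$ matrix has a $2$-dimensional kernel; one then verifies that $\rho_3$ and $\rho_4$ lie in this kernel and are linearly independent. At $2e_1+e_2+f_1+f_2+f_3$ there are six pairs and three monomials, giving a $3$-dimensional kernel; one checks that the three relations obtained by cyclically permuting columns of $\rho_5$ form a basis.

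The only obstacle is the bookkeeping: one must carefully enumerate every minimal quadratic generator from \cref{thm:Shafiei} whose multidegree is component-wise dominated by $\nu$, verify that each $\rho_i$ actually lies in $\ker d_1$ (a routine monomial cancellation), and confirm that the listed $\rho_i$'s span the kernel in each multidegree rather than leaving a gap. Each step reduces to elementary linear algebra over $\bk$ once the enumeration is set up, and no further ingredient beyond the symmetry reduction and the singular case treated in \cref{sec:singular-multidegree} is needed.
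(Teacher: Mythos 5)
Your proposal is correct and follows essentially the same route as the paper: decompose by multidegree, reduce to one representative per pair of partitions of $3$ using the row/column-permutation and transpose symmetries, dispatch the singular type $\{(1,1,1),(1,1,1)\}$ via \cref{thm:linear-generate-all-singular}, and for each of the five plural types enumerate the finitely many generator--coefficient pairs and compute the kernel of the evaluation map to $F_0$ by elementary linear algebra. The dimension counts you give ($0,1,2,2,3$ for the five plural types) match the paper's case analysis exactly.
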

\begin{remark}
In dots-and-boxes notation, these relations are
$$\begin{aligned}
\rho_1 &= \begin{relmatrix}{1}{2} \relgen{1}{1}{1}{1} \relcoeff{1}{2} \end{relmatrix} - \begin{relmatrix}{1}{2} \relgen{1}{1}{1}{2} \relcoeff{1}{1} \end{relmatrix} \\
\rho_2 &= \begin{relmatrix}{1}{3} \relgen{1}{1}{1}{2} \relcoeff{1}{3} \end{relmatrix} - \begin{relmatrix}{1}{3} \relgen{1}{1}{1}{3} \relcoeff{1}{2} \end{relmatrix} \\
\rho_3 &= \begin{relmatrix}{2}{2} \relgen{1}{1}{1}{2} \relcoeff{2}{1} \end{relmatrix} - \begin{relmatrix}{2}{2} \relgen{1}{1}{2}{1} \relcoeff{1}{2} \end{relmatrix} \\
\rho_4 &= \begin{relmatrix}{2}{2} \relgen{1}{1}{2}{2} \relcoeff{1}{1} \end{relmatrix} - \begin{relmatrix}{2}{2} \relgen{1}{1}{1}{2} \relcoeff{2}{1} \end{relmatrix} - \begin{relmatrix}{2}{2} \relgen{1}{1}{1}{1} \relcoeff{2}{2} \end{relmatrix} \\
\rho_5 &= \begin{relmatrix}{2}{3} \relgen{1}{1}{2}{2} \relcoeff{1}{3} \end{relmatrix} - \begin{relmatrix}{2}{3} \relgen{1}{1}{1}{3} \relcoeff{2}{2} \end{relmatrix} - \begin{relmatrix}{2}{3} \relgen{1}{2}{1}{3} \relcoeff{2}{1} \end{relmatrix} \\
\rho_S &= \begin{relmatrix}{3}{3} \relgen{1}{1}{2}{2} \relcoeff{3}{3} \end{relmatrix} - \begin{relmatrix}{3}{3} \relgen{2}{1}{3}{3} \relcoeff{1}{2} \end{relmatrix} + \begin{relmatrix}{3}{3} \relgen{1}{1}{3}{2} \relcoeff{2}{3} \end{relmatrix} - \begin{relmatrix}{3}{3} \relgen{2}{2}{3}{3} \relcoeff{1}{1} \end{relmatrix}.
\end{aligned}$$
\end{remark}

\begin{proof}
To list the linear relations, we will split $F_1$ into multidegree-homogeneous components. Recall from \cref{sec:symmetries} that we may specify a degree $m$ multidegree up to symmetry by giving a pair of partitions of $m$. Linear relations have $m = 3$, and there are $3$ partitions of $3$ (namely $3$, $2+1$ and $1+1+1$); therefore, there are $3^2 = 9$ multidegrees to consider:
\begin{equation*}
\begin{matrix}
(3, \ 3) & (3, \ 2+1) & (3, \ 1+1+1) \\
(2+1, \ 3) & (2+1, \ 2+1) & (2+1, \ 1+1+1) \\
(1+1+1, \ 3) & (1+1+1, \ 2+1) & (1+1+1, \ 1+1+1)
\end{matrix}
\end{equation*}
The multidegree $(1+1+1, \ 1+1+1)$ is singular --- we have seen that that the space of relations of this multidegree are generated by the orbit of $\rho_S$ (\cref{thm:linear-generate-all-singular}) and its dimension is $4$ (\cref{cor:dimension-linear-relations}). 
Furthermore, three of the multidegrees above can be obtained from the others by transposing. This leaves five cases left to consider, up to symmetry:
\begin{equation*}
\begin{matrix}
(3, \ 3) & (3, \ 2+1) & (3, \ 1+1+1) \\
\mathmakebox[\widthof{$(2+1, \ 3)$}][c]{-} & (2+1, \ 2+1) & (2+1, \ 1+1+1) \\
\mathmakebox[\widthof{$(1+1+1, \ 3)$}][c]{-} & \mathmakebox[\widthof{$(1+1+1, \ 2+1)$}][c]{-} & \mathmakebox[\widthof{$(1+1+1, \ 1+1+1)$}][c]{-}
\end{matrix}
\end{equation*}

We will identify a basis for the vector space of linear relations by considering these multidegrees case by case. For concreteness, we will write the variables as, say, $X_{1,2}$ instead of $X_{i,j}$, and generalize by symmetry.
\begin{itemize} [leftmargin=*]
\item $(3, \ 3)$. The only term with this multidegree (up to symmetry) is $X_{1,1} (X_{1,1}^2)$, which is the very compact picture $\begin{relmatrix}{1}{1} \relgen{1}{1}{1}{1} \relcoeff{1}{1} \end{relmatrix}$ in the dots-and-boxes notation. Since there is only one possible term, and its image in $F_0$ is the non-zero monomial $X_{1,1}^3$, a relation of this multidegree must be the zero relation.

\item $(3, \ 2+1)$. There are two possible terms with this multidegree (up to symmetry): $X_{1,2} (X_{1,1}^2) = \begin{relmatrix}{1}{2} \relgen{1}{1}{1}{1} \relcoeff{1}{2} \end{relmatrix}$ and $X_{1,1} (X_{1,1} X_{1,2}) = \begin{relmatrix}{1}{2} \relgen{1}{1}{1}{2} \relcoeff{1}{1} \end{relmatrix}$. Both of these get mapped to $X_{1,1}^2 X_{1,2}$ in $F_0$. Therefore all relations of this multidegree are a scalar multiple of the relation
\begin{equation*}
\rho_1 = X_{1,2} (X_{1,1}^2) - X_{1,1} (X_{1,1} X_{1,2}) = \begin{relmatrix}{1}{2} \relgen{1}{1}{1}{1} \relcoeff{1}{2} \end{relmatrix} - \begin{relmatrix}{1}{2} \relgen{1}{1}{1}{2} \relcoeff{1}{1} \end{relmatrix}.
\end{equation*}
It will be useful later to know the dimensions of these spaces of relations, so note that this relation spans a vector space with dimension $1$.

\item $(3, \ 1+1+1)$. There are three terms with this multidegree:
\begin{align*}
X_{1,3} (X_{1,1} X_{1,2}) & = \begin{relmatrix}{1}{3} \relgen{1}{1}{1}{2} \relcoeff{1}{3} \end{relmatrix}, \\
X_{1,2} (X_{1,1} X_{1,3}) & = \begin{relmatrix}{1}{3} \relgen{1}{1}{1}{3} \relcoeff{1}{2} \end{relmatrix}, \text{ and} \\
X_{1,1} (X_{1,2} X_{1,3}) & = \begin{relmatrix}{1}{3} \relgen{1}{2}{1}{3} \relcoeff{1}{1} \end{relmatrix}.
\end{align*}
These all map to $X_{1,1} X_{1,2} X_{1,3} \in F_0$. The relations among them are spanned by
\begin{align*}
\rho_2 = X_{1,3} (X_{1,1} X_{1,2}) - X_{1,2} (X_{1,1} X_{1,3}) & = \begin{relmatrix}{1}{3} \relgen{1}{1}{1}{2} \relcoeff{1}{3} \end{relmatrix} - \begin{relmatrix}{1}{3} \relgen{1}{1}{1}{3} \relcoeff{1}{2} \end{relmatrix}, \\
X_{1,2} (X_{1,1} X_{1,3}) - X_{1,1} (X_{1,2} X_{1,3}) & = \begin{relmatrix}{1}{3} \relgen{1}{1}{1}{3} \relcoeff{1}{2} \end{relmatrix} - \begin{relmatrix}{1}{3} \relgen{1}{2}{1}{3} \relcoeff{1}{1} \end{relmatrix}, \text{ and} \\
X_{1,1} (X_{1,2} X_{1,3}) - X_{1,3} (X_{1,1} X_{1,2}) & = \begin{relmatrix}{1}{3} \relgen{1}{2}{1}{3} \relcoeff{1}{1} \end{relmatrix} - \begin{relmatrix}{1}{3} \relgen{1}{1}{1}{2} \relcoeff{1}{3} \end{relmatrix}.
\end{align*}
Note that the second and third relations are permutations of the first, $\rho_2$.

These relations are linearly dependent since they sum to $0$, but no single one of them generates the entire space by itself, so this space of relations has dimension $2$.

\item $(2+1, \ 2+1)$. There are four terms with this multidegree:
\begin{align*}
X_{2,2} (X_{1,1}^2) & = \begin{relmatrix}{2}{2} \relgen{1}{1}{1}{1} \relcoeff{2}{2} \end{relmatrix}, \\
X_{2,1} (X_{1,1} X_{1,2}) & = \begin{relmatrix}{2}{2} \relgen{1}{1}{1}{2} \relcoeff{2}{1} \end{relmatrix}, \\
X_{1,2} (X_{1,1} X_{2,1}) & = \begin{relmatrix}{2}{2} \relgen{1}{1}{2}{1} \relcoeff{1}{2} \end{relmatrix}, \\
X_{1,1} (X_{1,1} X_{2,2} + X_{1,2} X_{2,1}) & = \begin{relmatrix}{2}{2} \relgen{1}{1}{2}{2} \relcoeff{1}{1} \end{relmatrix}.
\end{align*}
Note that the first maps to $X_{1,1}^2 X_{2,2}$ in $F_0$, the second and third map to $X_{1,1} X_{1,2} X_{2,1}$, and the fourth maps to the sum of these two polynomials. Thus for a linear combination of these to be a relation, if the $\bk$-coefficient of $X_{1,1} (X_{1,1} X_{2,2} + X_{1,2} X_{2,1})$ is $c$, the coefficient of $X_{2,2} (X_{1,1}^2)$ must be $-c$, and the coefficients of $X_{2,1} (X_{1,1} X_{1,2})$ and $X_{1,2} (X_{1,1} X_{2,1})$ must sum to $-c$. Therefore the relations of this multidegree are spanned by
\begin{gather*}
\rho_3 = X_{2,1} (X_{1,1} X_{1,2}) - X_{1,2} (X_{1,1} X_{2,1}) = \begin{relmatrix}{2}{2} \relgen{1}{1}{1}{2} \relcoeff{2}{1} \end{relmatrix} - \begin{relmatrix}{2}{2} \relgen{1}{1}{2}{1} \relcoeff{1}{2} \end{relmatrix} \\
\intertext{and}
\begin{multlined}
\rho_4 = X_{1,1} (X_{1,1} X_{2,2} + X_{1,2} X_{2,1}) - X_{2,1} (X_{1,1} X_{1,2}) - X_{2,2} (X_{1,1}^2) \\[6pt] {} = \begin{relmatrix}{2}{2} \relgen{1}{1}{2}{2} \relcoeff{1}{1} \end{relmatrix} - \begin{relmatrix}{2}{2} \relgen{1}{1}{1}{2} \relcoeff{2}{1} \end{relmatrix} - \begin{relmatrix}{2}{2} \relgen{1}{1}{1}{1} \relcoeff{2}{2} \end{relmatrix}.
\end{multlined}
\end{gather*}
The space of relations of this multidegree has dimension $2$.

\item $(2+1, \ 1+1+1)$. There are six terms with this multidegree:
\begin{align*}
X_{2,3} (X_{1,1} X_{1,2}) & = \begin{relmatrix}{2}{3} \relgen{1}{1}{1}{2} \relcoeff{2}{3} \end{relmatrix} \mapsto X_{1,1} X_{1,2} X_{2,3} = A, \\
X_{2,2} (X_{1,1} X_{1,3}) & = \begin{relmatrix}{2}{3} \relgen{1}{1}{1}{3} \relcoeff{2}{2} \end{relmatrix} \mapsto X_{1,1} X_{1,3} X_{2,2} = B, \\
X_{2,1} (X_{1,2} X_{1,3}) & = \begin{relmatrix}{2}{3} \relgen{1}{2}{1}{3} \relcoeff{2}{1} \end{relmatrix} \mapsto X_{1,2} X_{1,3} X_{2,1} = C, \\
X_{1,3} (X_{1,1} X_{2,2} + X_{1,2} X_{2,1}) & = \begin{relmatrix}{2}{3} \relgen{1}{1}{2}{2} \relcoeff{1}{3} \end{relmatrix} \mapsto B + C, \\
X_{1,2} (X_{1,1} X_{2,3} + X_{1,3} X_{2,1}) & = \begin{relmatrix}{2}{3} \relgen{1}{1}{2}{3} \relcoeff{1}{2} \end{relmatrix} \mapsto A + C, \\
X_{1,1} (X_{1,2} X_{2,3} + X_{1,3} X_{2,2}) & = \begin{relmatrix}{2}{3} \relgen{1}{2}{2}{3} \relcoeff{1}{1} \end{relmatrix} \mapsto A + B.
\end{align*}
If the $\bk$-coefficient of the $i$th term is $c_i$ in a relation, then by comparing terms in $F_0$, we have that $c_1+c_5+c_6=0$, $c_2+c_4+c_6=0$, and $c_3+c_4+c_5=0$.  We see that the relations are spanned by
\begin{gather*}
\begin{multlined}
\rho_5 = X_{1,3} (X_{1,1} X_{2,2} + X_{1,2} X_{2,1}) - X_{2,2} (X_{1,1} X_{1,3}) - X_{2,1} (X_{1,2} X_{1,3}) \\[6pt] {} = \begin{relmatrix}{2}{3} \relgen{1}{1}{2}{2} \relcoeff{1}{3} \end{relmatrix} - \begin{relmatrix}{2}{3} \relgen{1}{1}{1}{3} \relcoeff{2}{2} \end{relmatrix} - \begin{relmatrix}{2}{3} \relgen{1}{2}{1}{3} \relcoeff{2}{1} \end{relmatrix},
\end{multlined} \\
\begin{multlined}
\hphantom{\rho_5 = {}} X_{1,2} (X_{1,1} X_{2,3} + X_{1,3} X_{2,1}) - X_{2,3} (X_{1,1} X_{1,2}) - X_{2,1} (X_{1,2} X_{1,3}) \\[6pt] {} = \begin{relmatrix}{2}{3} \relgen{1}{1}{2}{3} \relcoeff{1}{2} \end{relmatrix} - \begin{relmatrix}{2}{3} \relgen{1}{1}{1}{2} \relcoeff{2}{3} \end{relmatrix} - \begin{relmatrix}{2}{3} \relgen{1}{2}{1}{3} \relcoeff{2}{1} \end{relmatrix},
\end{multlined} \\
\begin{multlined}
\hphantom{\rho_5 = {}} X_{1,1} (X_{1,2} X_{2,3} + X_{1,3} X_{2,2}) - X_{2,3} (X_{1,1} X_{1,2}) - X_{2,2} (X_{1,1} X_{1,3}) \\[6pt] {} = \begin{relmatrix}{2}{3} \relgen{1}{2}{2}{3} \relcoeff{1}{1} \end{relmatrix} - \begin{relmatrix}{2}{3} \relgen{1}{1}{1}{2} \relcoeff{2}{3} \end{relmatrix} - \begin{relmatrix}{2}{3} \relgen{1}{1}{1}{3} \relcoeff{2}{2} \end{relmatrix}.
\end{multlined}
\end{gather*}
The second and third of these are permutations of $\rho_5$. The space of relations of this multidegree has dimension $3$.
\end{itemize}

This covers all multidegrees possible for a linear relation up to symmetry, so the set of permutations and transposes of $\rho_1, \dotsc, \rho_5, \rho_S$ generates all linear relations.
\end{proof}

\begin{proposition} \label{thm:beta23}
The dimension of the vector space of linear relations is
\begin{equation*}
\beta_{2,3} = 4 \binom{n+1}{3} \binom{n+2}{3} = \frac{1}{9} n^2 \left( n+1 \right)^2 \left( n-1 \right) \left( n+2 \right).
\end{equation*}
\end{proposition}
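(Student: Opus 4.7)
The plan is to compute $\beta_{2,3}$ by summing the dimensions of the multidegree-homogeneous components of $(F_2)_3$, using the dimensions already extracted in the proof of Proposition~\ref{thm:linear-relations-list}. Since the differential $d_1$ preserves multidegree, the space of linear relations decomposes as a direct sum over all multidegrees of standard degree $3$; each such multidegree is a pair of $n$-tuples of non-negative integers summing to $3$. The $\bS_n \times \bS_n$-symmetry of permuting rows and columns preserves $d_1$, so the dimension of the space of relations of a given multidegree depends only on the pair of partition types of its two $n$-tuples.

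First I would tally the number of $n$-tuples of non-negative integers summing to $3$ for each partition type: there are $n$ of type $(3)$, $n(n-1)$ of type $(2,1)$, and $\binom{n}{3}$ of type $(1,1,1)$. Next, from the case analysis in the proof of Proposition~\ref{thm:linear-relations-list} together with Corollary~\ref{cor:dimension-linear-relations}, I would read off the dimension $r_{\lambda, \nu}$ of the space of relations of a fixed multidegree of partition types $(\lambda, \nu)$. The six resulting values fit the pattern $r_{\lambda, \nu} = (\ell(\lambda)-1) + (\ell(\nu)-1)$, where $\ell$ denotes the number of parts of a partition.

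Writing $d_\lambda$ for the number of $n$-tuples of type $\lambda$, this additive pattern makes the total sum factor:
\begin{equation*}
\beta_{2,3} = \sum_{\lambda, \nu} r_{\lambda, \nu} \, d_\lambda d_\nu = 2 \left( \sum_\lambda (\ell(\lambda)-1) d_\lambda \right) \left( \sum_\nu d_\nu \right).
\end{equation*}
A direct computation gives $\sum_\lambda (\ell(\lambda)-1) d_\lambda = n(n-1) + 2\binom{n}{3} = 2\binom{n+1}{3}$ and $\sum_\nu d_\nu = n + n(n-1) + \binom{n}{3} = \binom{n+2}{3}$, yielding the claimed $\beta_{2,3} = 4\binom{n+1}{3}\binom{n+2}{3}$. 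The main obstacle is simply the bookkeeping of partition types and their multiplicities; since all the per-multidegree dimensional content was already settled in Proposition~\ref{thm:linear-relations-list}, this proof is essentially an accounting exercise.
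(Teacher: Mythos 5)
Your proposal is correct and follows essentially the same route as the paper: decompose the space of linear relations by multidegree, use the per-multidegree dimensions from Proposition~\ref{thm:linear-relations-list} and Corollary~\ref{cor:dimension-linear-relations}, count the $n$-tuples of each partition type ($n$, $n(n-1)$, $\binom{n}{3}$), and sum. Your observation that the dimension equals $(\ell(\lambda)-1)+(\ell(\nu)-1)$, so that the sum factors as $2\bigl(\sum_\lambda(\ell(\lambda)-1)d_\lambda\bigr)\bigl(\sum_\nu d_\nu\bigr) = 2\cdot 2\binom{n+1}{3}\cdot\binom{n+2}{3}$, is a pleasant refinement of the paper's brute-force summation that makes the product form of the answer transparent, but the underlying argument is the same.
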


\begin{proof}
The space of all linear relations is the direct sum of the spaces of linear relations of fixed multidegree. We calculated the dimension of these spaces for each multidegree in \cref{cor:dimension-linear-relations} and the proof of \cref{thm:linear-relations-list}, so it only remains to add these together, with multiplicity. These dimensions are reprinted in \cref{tab:dimensions-of-linear-relation-spaces} for reference.

\begin{table}[tp]
\caption{Table of dimensions of spaces of linear relations of fixed multidegree, computed in \cref{thm:linear-relations-list}. The row and column headers are the corresponding partition, and the number of elements of $\mathbb N^n$ inducing such partitions.} \label{tab:dimensions-of-linear-relation-spaces}
\begin{equation*}
\begin{array}{r r | c c c}
&& 3 & 2+1 & 1+1+1 \\
&& n & n(n-1) & \binom{n}{3} \\ \hline
3 & n & 0 & 1 & 2 \\
2+1 & n(n-1) & 1 & 2 & 3 \\
1+1+1 & \binom{n}{3} & 2 & 3 & 4
\end{array}
\end{equation*}
\end{table}

Given an $n \times n$ matrix, we therefore need to know how many ways there are of picking a multidegree corresponding to each partition pair $(p, p')$, for $p,p' \in \{3, \ {2+1}, \ {1+1+1}\}$. Note that we may allocate the rows and columns independently, so we just need to know how many $n$-tuples of non-negative integers have non-zero entries corresponding to each partition $p$. There are $\binom{n}{1} = n$ ways to get the partition $3$, one for each tuple $(0, \dotsc, 0, 3, 0, \dotsc, 0)$; there are $\binom{n}{1} \binom{n-1}{1} = n (n-1)$ ways to get $2+1$, first picking a position for the $2$ and then a position for the $1$; and there are $\binom{n}{3}$ ways to get $1+1+1$.

Therefore the dimension of the space of all linear relations is
\begin{multline*}
0 n^2 + 1 n^2 (n-1) + 2 n \binom{n}{3} %\\ {} 
+ 1 n^2 (n-1) + 2 n^2 (n-1)^2 + 3 n (n-1) \binom{n}{3} \\ {} + 2 n \binom{n}{3} + 3 n (n-1) \binom{n}{3} + 4 \binom{n}{3}^2 %\\ {} 
= \frac{1}{9} n^2 (n+1)^2 (n-1) (n+2). \qedhere
\end{multline*}
\end{proof}

\subsection{Relations of higher degree}

\renewcommand{\relscale}{0.76}

Let $L \subset F_1$ be the submodule generated by the linear relations; by \cref{thm:linear-relations-list}, we know that $L$ is generated by the permutations and transposes of $\rho_1, \dots, \rho_5, \rho_S$. We say that elements $h_1$ and $h_2$ are ``equivalent modulo $L$'', or that ``$h_1 \equiv h_2$ modulo $L$'', if $h_1-h_2 \in L$.

We call the three classes of generators given by $X_{i,j}^2$, $X_{i,j} X_{i,k}$ and $X_{i,j} X_{k,j}$ the \emph{monomial generators}, to distinguish them from $X_{i,j} X_{k,l} + X_{i,l} X_{k,j}$.  The first step is to show that it suffices to consider relations of plural multidegree whose terms only involve the monomial generators.

\begin{lemma} \label{thm:all-relations-are-monomial}
Consider an element of the form $f (g)  \in F_1$, where $f \in S$ and $g$ is a generator from \cref{thm:Shafiei}.  If $f(g) \in F_1$ is multi-homogeneous of plural multidegree, then it is equivalent modulo $L$ to an $S$-linear combination of monomial generators.
\end{lemma}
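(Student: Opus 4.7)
My plan is to exhibit, for each non-monomial generator $g = X_{i,j}X_{k,l}+X_{i,l}X_{k,j}$ (the only non-monomial type in Shafiei's list) and each monomial $f$ making $f(g)$ of plural multidegree, a rewriting modulo $L$ that replaces $g$ by a combination of monomial generators. The key tools are already in hand from \cref{thm:linear-relations-list}. First, $\rho_4$ and $\rho_5$ (together with their row/column/transpose permutations, all in $L$) convert $X_{r,s}\cdot g$ into a combination of monomial-generator terms whenever $X_{r,s}$ shares a row or a column with $g$. Second, $\rho_S \in L$ rewrites $X_{r,s}\cdot g$ as a $\bk$-linear combination of three terms of the form $X_{a,b}\cdot g'$, where $g'$ is some other non-monomial generator and $(a,b)$ is drawn from the indices of $g$, provided a third row and column (distinct from $i,k,j,l$) are available.

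Armed with these, I would split the argument according to why the multidegree of $f(g)$ is plural. Writing $f=\prod_\alpha X_{r_\alpha,s_\alpha}$, the multidegree of $g$ alone is $e_i+e_k+f_j+f_l$, which is singular, so plurality must arise from one of the following:
\emph{Case (i)}: some $X_{r_\alpha,s_\alpha}$ of $f$ satisfies $r_\alpha\in\{i,k\}$ or $s_\alpha\in\{j,l\}$.
Factor $f = X_{r,s}f'$ with $(r,s)=(r_\alpha,s_\alpha)$; apply the appropriate $\rho_4$ or $\rho_5$ (or its transpose/permutation) to reduce $X_{r,s}(g)$ to monomial-generator terms, then multiply through by $f'$.
\emph{Case (ii)}: every index of $f$ is disjoint from $\{i,k\}\cup\{j,l\}$ (in its respective row/column), but $f$ has an internal repetition. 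WLOG a row index $r$ repeats, so $f=X_{r,s_*}X_{r,s'}f''$ with $r\notin\{i,k\}$ and $s_*\notin\{j,l\}$. Then $\{i,k,r\}$ and $\{j,l,s_*\}$ each consist of three distinct indices, so a suitable permutation of $\rho_S$ rewrites $X_{r,s_*}(g)$ as a sum of three terms $X_{a,b}(g')$; a direct inspection of $\rho_S$ shows that each of these three $g'$ involves row $r$. Multiplying by the residual $f' = X_{r,s'}f''$, each resulting term $(X_{a,b}f')\cdot g'$ now falls into Case (i), because the surviving $X_{r,s'}$ in $f'$ shares row $r$ with $g'$. A second application of Case (i) finishes these terms. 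If instead the repetition in $f$ is in a column, the argument is symmetric, using the transpose of $\rho_S$.

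The main subtlety is Case (ii): one must check both that $\rho_S$ is actually applicable (the third row and column genuinely exist, which is forced by the Case (ii) hypothesis that $f$'s indices are disjoint from those of $g$) and that, after swapping via $\rho_S$, the residual $f'$ still triggers Case (i) on each of the three new non-monomial generators. Both points rest on the same structural fact that the ``extra'' row (or column) appearing in $\rho_S$ gets promoted into the new generators $g'$, while a copy of a variable in that same row (or column) survives in $f'$ thanks to the repetition hypothesis. Once Case (ii) is reduced to Case (i), and Case (i) is handled by direct application of $\rho_4$ or $\rho_5$, the lemma follows.
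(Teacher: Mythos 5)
Your proposal is correct and follows essentially the same route as the paper: Case (i) is handled by $\rho_4$, $\rho_5$ and their symmetries exactly as in the paper's first two divisibility cases, and your Case (ii) reduction --- apply a permuted $\rho_S$ to trade the off-rectangle dot for generators containing its row, then invoke Case (i) via the repeated row index --- is precisely the content of the paper's explicit identities for the $X_{3,3}^2\cdot g$ and $X_{3,3}X_{3,4}\cdot g$ cases, just presented as a two-step rewriting rather than a single displayed sum. The applicability checks you flag (existence of the third row and column, and that each new generator $g'$ inherits the dot's row) are the right ones and do hold.
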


\begin{proof}
This statement is trivial if $g$ is a monomial generator, so assume that $g=X_{i,j} X_{k,l} + X_{i,l} X_{k,j}$.  Since the multidegree of $f(g)$ is plural, at least one of the rows or columns in the dots-and-boxes notation has either two dots or a dot and a corner of the rectangle. In other words, up to symmetry, the element $f(g) \in F_1$ must be divisible by one of
\begin{equation*}
\begin{relmatrix}{2}{2} \relgen{1}{1}{2}{2} \relcoeff{1}{1} \end{relmatrix}, \quad \begin{relmatrix}{2}{3} \relgen{1}{1}{2}{2} \relcoeff{1}{3} \end{relmatrix}, \quad \begin{relmatrix}{3}{3} \relgen{1}{1}{2}{2} \relcoeff[2]{3}{3} \end{relmatrix} \quad \text{or} \quad \begin{relmatrix}{3}{4} \relgen{1}{1}{2}{2} \relcoeff{3}{3} \relcoeff{3}{4} \end{relmatrix}.
\end{equation*}
It therefore suffices to show that each of these elements is equivalent to an $S$-linear combination of monomial generators modulo $L$.

The first two of these are straightforward: $\rho_4$ tells us that $\begin{relmatrix}{2}{2} \relgen{1}{1}{2}{2} \relcoeff{1}{1} \end{relmatrix} \equiv \begin{relmatrix}{2}{2} \relgen{1}{1}{1}{2} \relcoeff{2}{1} \end{relmatrix} + \begin{relmatrix}{2}{2} \relgen{1}{1}{1}{1} \relcoeff{2}{2} \end{relmatrix}$ modulo $L$, and $\rho_5$ says that $\begin{relmatrix}{2}{3} \relgen{1}{1}{2}{2} \relcoeff{1}{3} \end{relmatrix} \equiv \begin{relmatrix}{2}{3} \relgen{1}{1}{1}{3} \relcoeff{2}{2} \end{relmatrix} + \begin{relmatrix}{2}{3} \relgen{1}{2}{1}{3} \relcoeff{2}{1} \end{relmatrix}$ modulo $L$.

For the third case, observe that
\begin{align*}
\lefteqn{\begin{relmatrix}{3}{3} \relgen{1}{1}{2}{2} \relcoeff[2]{3}{3} \end{relmatrix} - \begin{relmatrix}{3}{3} \relgen{3}{3}{3}{3} \relcoeff{1}{1} \relcoeff{2}{2} \end{relmatrix} - \begin{relmatrix}{3}{3} \relgen{3}{3}{3}{3} \relcoeff{1}{2} \relcoeff{2}{1} \end{relmatrix}} \\
	& {} = \left( \begin{relmatrix}{3}{3} \relgen{2}{2}{3}{3} \relcoeff{1}{1} \relcoeff{3}{3} \end{relmatrix} - \begin{relmatrix}{3}{3} \relgen{3}{3}{3}{3} \relcoeff{1}{1} \relcoeff{2}{2} \end{relmatrix} - \begin{relmatrix}{3}{3} \relgen{3}{2}{3}{3} \relcoeff{1}{1} \relcoeff{2}{3} \end{relmatrix} \right) + \left( \begin{relmatrix}{3}{3} \relgen{2}{1}{3}{3} \relcoeff{1}{2} \relcoeff{3}{3} \end{relmatrix} - \begin{relmatrix}{3}{3} \relgen{3}{3}{3}{3} \relcoeff{1}{2} \relcoeff{2}{1} \end{relmatrix} - \begin{relmatrix}{3}{3} \relgen{3}{1}{3}{3} \relcoeff{1}{2} \relcoeff{2}{3} \end{relmatrix} \right) \\ 
	& \qquad {} - \left( \begin{relmatrix}{3}{3} \relgen{1}{1}{3}{2} \relcoeff{2}{3} \relcoeff{3}{3} \end{relmatrix} - \begin{relmatrix}{3}{3} \relgen{3}{2}{3}{3} \relcoeff{1}{1} \relcoeff{2}{3} \end{relmatrix} - \begin{relmatrix}{3}{3} \relgen{3}{1}{3}{3} \relcoeff{1}{2} \relcoeff{2}{3} \end{relmatrix} \right) - \left( \begin{relmatrix}{3}{3} \relgen{2}{2}{3}{3} \relcoeff{1}{1} \relcoeff{3}{3} \end{relmatrix} + \begin{relmatrix}{3}{3} \relgen{2}{1}{3}{3} \relcoeff{1}{2} \relcoeff{3}{3} \end{relmatrix} - \begin{relmatrix}{3}{3} \relgen{1}{1}{3}{2} \relcoeff{2}{3} \relcoeff{3}{3} \end{relmatrix} - \begin{relmatrix}{3}{3} \relgen{1}{1}{2}{2} \relcoeff[2]{3}{3} \end{relmatrix} \right)
\end{align*}
where each summand on the right hand side is in $L$: the first three are each permutations of $\rho_4$ times some $X_{i,j}$, and the fourth summand is $\rho_S$ times $X_{3,3}$. Therefore $\begin{relmatrix}{3}{3} \relgen{1}{1}{2}{2} \relcoeff[2]{3}{3} \end{relmatrix} \equiv \begin{relmatrix}{3}{3} \relgen{3}{3}{3}{3} \relcoeff{1}{1} \relcoeff{2}{2} \end{relmatrix} + \begin{relmatrix}{3}{3} \relgen{3}{3}{3}{3} \relcoeff{1}{2} \relcoeff{2}{1} \end{relmatrix}$ modulo $L$.

In the fourth case, we have
\begin{multline*}
\begin{relmatrix}{3}{4} \relgen{1}{1}{2}{2} \relcoeff{3}{3} \relcoeff{3}{4} \end{relmatrix} - \begin{relmatrix}{3}{4} \relgen{3}{3}{3}{4} \relcoeff{1}{1} \relcoeff{2}{2} \end{relmatrix} - \begin{relmatrix}{3}{4} \relgen{3}{3}{3}{4} \relcoeff{1}{2} \relcoeff{2}{1} \end{relmatrix} = \left( \begin{relmatrix}{3}{4} \relgen{2}{2}{3}{3} \relcoeff{1}{1} \relcoeff{3}{4} \end{relmatrix} - \begin{relmatrix}{3}{4} \relgen{3}{3}{3}{4} \relcoeff{1}{1} \relcoeff{2}{2} \end{relmatrix} - \begin{relmatrix}{3}{4} \relgen{3}{2}{3}{4} \relcoeff{1}{1} \relcoeff{2}{3} \end{relmatrix} \right) \\
	{} + \left( \begin{relmatrix}{3}{4} \relgen{2}{1}{3}{3} \relcoeff{1}{2} \relcoeff{3}{4} \end{relmatrix} - \begin{relmatrix}{3}{4} \relgen{3}{3}{3}{4} \relcoeff{1}{2} \relcoeff{2}{1} \end{relmatrix} - \begin{relmatrix}{3}{4} \relgen{3}{1}{3}{4} \relcoeff{1}{2} \relcoeff{2}{3} \end{relmatrix} \right) - \left( \begin{relmatrix}{3}{4} \relgen{1}{1}{3}{2} \relcoeff{2}{3} \relcoeff{3}{4} \end{relmatrix} - \begin{relmatrix}{3}{4} \relgen{3}{2}{3}{4} \relcoeff{1}{1} \relcoeff{2}{3} \end{relmatrix} - \begin{relmatrix}{3}{4} \relgen{3}{1}{3}{4} \relcoeff{1}{2} \relcoeff{2}{3} \end{relmatrix} \right) \\
	{} - \left( \begin{relmatrix}{3}{4} \relgen{2}{2}{3}{3} \relcoeff{1}{1} \relcoeff{3}{4} \end{relmatrix} + \begin{relmatrix}{3}{4} \relgen{2}{1}{3}{3} \relcoeff{1}{2} \relcoeff{3}{4} \end{relmatrix} - \begin{relmatrix}{3}{4} \relgen{1}{1}{3}{2} \relcoeff{2}{3} \relcoeff{3}{4} \end{relmatrix} - \begin{relmatrix}{3}{4} \relgen{1}{1}{2}{2} \relcoeff{3}{3} \relcoeff{3}{4} \end{relmatrix} \right) 
\end{multline*}
where each summand on the right hand side is in $L$ --- the first three are permutations of $\rho_5$ times some $X_{i,j}$, and the fourth is $\rho_S$ times $X_{3,4}$ --- so $\begin{relmatrix}{3}{4} \relgen{1}{1}{2}{2} \relcoeff{3}{3} \relcoeff{3}{4} \end{relmatrix} \equiv \begin{relmatrix}{3}{4} \relgen{3}{3}{3}{4} \relcoeff{1}{1} \relcoeff{2}{2} \end{relmatrix} + \begin{relmatrix}{3}{4} \relgen{3}{3}{3}{4} \relcoeff{1}{2} \relcoeff{2}{1} \end{relmatrix}$ modulo $L$.
\end{proof}

The first step in considering higher dimension relations is to examine the quadratic ones.

\begin{proposition} \label{thm:linear-relations-generate-quadratic}
If $\chartext(\bk) \neq 2$, then all quadratic relations of plural multidegree are generated by the linear relations.
\end{proposition}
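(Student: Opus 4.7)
The plan is to apply Lemma~\ref{thm:all-relations-are-monomial} to reduce the problem and then perform an explicit case analysis organized by multidegree. By \cref{thm:all-relations-are-monomial}, any quadratic relation $r$ of plural multidegree is equivalent modulo $L$ to a relation $r'$ all of whose terms $f \cdot (g)$ have $g$ a \emph{monomial} generator, that is, of the form $X_{p,q}^2$, $X_{p,q}X_{p,r}$, or $X_{p,q}X_{s,q}$. Thus it suffices to show that every such $r'$ lies in $L$. Any such $r'$ is at the same time a first syzygy of the monomial subideal generated by the monomial generators inside $\det_n^{\perp}$, so the problem has a combinatorial flavour.

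Second, one enumerates the plural multidegrees of standard degree $4$: these correspond to pairs of partitions of $4$, excluding the singular case $(1+1+1+1,\ 1+1+1+1)$. Up to transposition and permutation of rows and columns, this is a short finite list, since each partition lies in $\{4,\ 3+1,\ 2+2,\ 2+1+1,\ 1+1+1+1\}$. For each multidegree, one lists the monomials in $S$ of that multidegree, enumerates all factorizations of each as $f \cdot g$ with $g$ a monomial generator, and solves the resulting linear system to identify a spanning set for the space of monomial-only relations of that multidegree. For each spanning relation, one then writes it explicitly as an $S$-linear combination of products $X_{p,q} \cdot \rho_a$ for $\rho_a \in \{\rho_1,\dotsc,\rho_5,\rho_S\}$. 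The typical strategy is a telescoping argument using $\rho_3$-type relations to swap a variable between the coefficient and the generator; when the reduction produces an intermediate term of plural multidegree involving a mixed generator $X_{i,j}X_{k,l}+X_{i,l}X_{k,j}$, \cref{thm:all-relations-are-monomial} is reapplied to eliminate it.

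The main obstacle is the bookkeeping across all cases and, critically, pinpointing where the hypothesis $\chartext(\bk)\neq 2$ is used. It enters precisely in those cases where expressing the target relation via $\rho_4$ or $\rho_S$ leaves a residual mixed-generator term appearing twice with coefficient $+1$; combining and then reducing via \cref{thm:all-relations-are-monomial} produces an overall factor of $2$ that must be inverted in order to solve for the desired combination. In characteristic $2$ this residual becomes a genuine obstruction, consistent with \cref{main-theorem}\ref{main-theorem-b} producing honestly new quadratic relations for $\perm_n^{\perp}=\det_n^{\perp}$ in that setting.
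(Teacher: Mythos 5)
Your proposal follows essentially the same route as the paper: reduce via \cref{thm:all-relations-are-monomial} to relations involving only monomial generators, then run a finite case analysis over the plural multidegrees of standard degree $4$ (the paper organizes this by the finer monomial grading, showing all terms $f(g)$ of a fixed monomial degree are pairwise equivalent modulo $L$, which is equivalent to your spanning-set formulation), and you correctly locate the single use of $\chartext(\bk)\neq 2$ in the $(2+2,\ 1+1+1+1)$ case, where a combination of permutations of $\rho_5$ produces twice the desired difference and the factor of $2$ must be inverted. The remaining work is only the routine explicit verification in each case, which your outline correctly anticipates.
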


\begin{proof}
Once again we work case by case; however, by \cref{thm:all-relations-are-monomial}, it suffices to consider relations only involving the monomial generators, which allows us to use the finer monomial grading instead of multigrading.  To establish this result, it suffices to show that all elements in $F_1$ of the form $f(g)$ with the same monomial degree, where $f$ is a monomial and $g$ is a monomial quadratic generator, are equivalent modulo $L$.

We will classify the monomial degrees by the multidegrees they are a refinement of.  We will use the matrix description to describe monomial degrees as introduced in \cref{S:gradings}.
\begin{itemize} [leftmargin=*]
 \setlength{\arraycolsep}{2.5pt} \renewcommand{\arraystretch}{0.7}
\item $(4, \ 4)$. There is only one monomial degree possible with this multidegree (up to symmetry), namely $\begin{bmatrix} 4 \end{bmatrix}$ (displayed in matrix form), and the only possible term with this degree is $X_{1,1}^2 (X_{1,1}^2)$, i.e.,  $\begin{relmatrix}{1}{1} \relgen{1}{1}{1}{1} \relcoeff[2]{1}{1} \end{relmatrix}$ in the dots-and-boxes notation.

\item $(4, \ 3+1)$. The only monomial degree of this multidegree is $\begin{bmatrix} 3 & 1 \end{bmatrix}$. The two possible terms with this degree are $\begin{relmatrix}{1}{2} \relgen{1}{1}{1}{1} \relcoeff{1}{1} \relcoeff{1}{2} \end{relmatrix}$ and $\begin{relmatrix}{1}{2} \relgen{1}{1}{1}{2} \relcoeff[2]{1}{1} \end{relmatrix}$, which are equivalent by $\rho_1$.

\item $(4, \ 2+2)$. The only monomial degree is $\begin{bmatrix} 2 & 2 \end{bmatrix}$. We have
\begin{align*}
\begin{relmatrix}{1}{2} \relgen{1}{1}{1}{1} \relcoeff[2]{1}{2} \end{relmatrix} & \equiv \begin{relmatrix}{1}{2} \relgen{1}{1}{1}{2} \relcoeff{1}{1} \relcoeff{1}{2} \end{relmatrix} && \text{(by $\rho_1$)} \\
& \equiv \begin{relmatrix}{1}{2} \relgen{1}{2}{1}{2} \relcoeff[2]{1}{1} \end{relmatrix} && \text{(by $\rho_1$)}
\end{align*}
and these are all the terms with this monomial degree.

\item $(4, \ 2+1+1)$. The only monomial degree is $\begin{bmatrix} 2 & 1 & 1 \end{bmatrix}$. We have
\begin{align*}
\begin{relmatrix}{1}{3} \relgen{1}{1}{1}{1} \relcoeff{1}{2} \relcoeff{1}{3} \end{relmatrix} & \equiv \begin{relmatrix}{1}{3} \relgen{1}{1}{1}{2} \relcoeff{1}{1} \relcoeff{1}{3} \end{relmatrix} && \text{(by $\rho_1$)} \\
& \equiv \begin{relmatrix}{1}{3} \relgen{1}{1}{1}{3} \relcoeff{1}{1} \relcoeff{1}{2} \end{relmatrix} && \text{(by $\rho_2$)} \\
& \equiv \begin{relmatrix}{1}{3} \relgen{1}{2}{1}{3} \relcoeff[2]{1}{1} \end{relmatrix} && \text{(by $\rho_2$)}
\end{align*}
and these are all the terms with this monomial degree.

\item $(4, \ 1+1+1+1)$. The only monomial degree is $\begin{bmatrix} 1 & 1 & 1 & 1 \end{bmatrix}$. The terms with this monomial degree are the permutations of $\begin{relmatrix}{1}{4} \relgen{1}{1}{1}{2} \relcoeff{1}{3} \relcoeff{1}{4} \end{relmatrix}$. Any two of these permutations that have a $\begin{relbare} \relcoeff{1}{1} \end{relbare}$ in the same position are equivalent by $\rho_2$, and any of the permutations that don't share a $\begin{relbare} \relcoeff{1}{1} \end{relbare}$ are each mutually equivalent to another of the permutations by the same rule: e.g.
\begin{equation*}
\begin{relmatrix}{1}{4} \relgen{1}{1}{1}{2} \relcoeff{1}{3} \relcoeff{1}{4} \end{relmatrix} \equiv \begin{relmatrix}{1}{4} \relgen{1}{2}{1}{3} \relcoeff{1}{1} \relcoeff{1}{4} \end{relmatrix} \equiv \begin{relmatrix}{1}{4} \relgen{1}{3}{1}{4} \relcoeff{1}{1} \relcoeff{1}{2} \end{relmatrix}
\end{equation*}

\item $(3+1, \ 3+1)$. The two monomial degrees for this multidegree are
\begin{itemize}
\item $\begin{bmatrix} 3 & \\ & 1 \end{bmatrix}$: in this case, the only monomial term is $\begin{relmatrix}{2}{2} \relgen{1}{1}{1}{1} \relcoeff{1}{1} \relcoeff{2}{2} \end{relmatrix}$.
\item $\begin{bmatrix} 2 & 1 \\ 1 & \end{bmatrix}$: we have
\begin{align*}
\begin{relmatrix}{2}{2} \relgen{1}{1}{1}{2} \relcoeff{1}{1} \relcoeff{2}{1} \end{relmatrix} & \equiv \begin{relmatrix}{2}{2} \relgen{1}{1}{1}{1} \relcoeff{1}{2} \relcoeff{2}{1} \end{relmatrix} \equiv \begin{relmatrix}{2}{2} \relgen{1}{1}{2}{1} \relcoeff{1}{1} \relcoeff{1}{2} \end{relmatrix} && \text{(by $\rho_1$)}
\end{align*}
\end{itemize}

\item $(3+1, \ 2+2)$. The only monomial degree up to symmetry here is $\begin{bmatrix} 2 & 1 \\ & 1 \end{bmatrix}$. We have
\begin{align*}
\begin{relmatrix}{2}{2} \relgen{1}{1}{1}{1} \relcoeff{1}{2} \relcoeff{2}{2} \end{relmatrix} & \equiv \begin{relmatrix}{2}{2} \relgen{1}{1}{1}{2} \relcoeff{1}{1} \relcoeff{2}{2} \end{relmatrix} && \text{(by $\rho_1$)} \\
& \equiv \begin{relmatrix}{2}{2} \relgen{1}{2}{2}{2} \relcoeff[2]{1}{1} \end{relmatrix} && \text{(by $\rho_3$)}
\end{align*}

\item $(3+1, \ 2+1+1)$. The monomial degrees up to symmetry are:
\begin{itemize}
\item $\begin{bmatrix} 2 & 1 & \\ & & 1 \end{bmatrix}$: here we have
\begin{align*}
\begin{relmatrix}{2}{3} \relgen{1}{1}{1}{1} \relcoeff{1}{2} \relcoeff{2}{3} \end{relmatrix} & \equiv \begin{relmatrix}{2}{3} \relgen{1}{1}{1}{2} \relcoeff{1}{1} \relcoeff{2}{3} \end{relmatrix} && \text{(by $\rho_1$)}
\end{align*}
\item $\begin{bmatrix} 1 & 1 & 1 \\ 1 & & \end{bmatrix}$: we have
\begin{align*}
\begin{relmatrix}{2}{3} \relgen{1}{1}{2}{1} \relcoeff{1}{2} \relcoeff{1}{3} \end{relmatrix} & \equiv \begin{relmatrix}{2}{3} \relgen{1}{1}{1}{2} \relcoeff{2}{1} \relcoeff{1}{3} \end{relmatrix} && \text{(by $\rho_3$)} \\
& \equiv \begin{relmatrix}{2}{3} \relgen{1}{1}{1}{3} \relcoeff{2}{1} \relcoeff{1}{2} \end{relmatrix} \equiv \begin{relmatrix}{2}{3} \relgen{1}{2}{1}{3} \relcoeff{2}{1} \relcoeff{1}{1} \end{relmatrix} && \text{(by $\rho_2$)}
\end{align*}
\end{itemize}

\item $(3+1, \ 1+1+1+1)$. Up to symmetry the only monomial degree is $\begin{bmatrix} 1 & 1 & 1 & \\ & & & 1 \end{bmatrix}$. But
\begin{align*}
\begin{relmatrix}{2}{4} \relgen{1}{1}{1}{2} \relcoeff{1}{3} \relcoeff{2}{4} \end{relmatrix} & \equiv \begin{relmatrix}{2}{4} \relgen{1}{1}{1}{3} \relcoeff{1}{2} \relcoeff{2}{4} \end{relmatrix} \equiv \begin{relmatrix}{2}{4} \relgen{1}{2}{1}{3} \relcoeff{1}{1} \relcoeff{2}{4} \end{relmatrix} && \text{(by $\rho_2$)}
\end{align*}

\item $(2+2, \ 2+2)$. The monomial degrees are
\begin{itemize}
\item $\begin{bmatrix} 2 & \\ & 2 \end{bmatrix}$: by adding relations from $L$, specifically permutations of $\rho_4$ and $\rho_3$, we can obtain
\begin{multline*}
\left( \begin{relmatrix}{2}{2} \relgen{1}{1}{2}{2} \relcoeff{1}{1} \relcoeff{2}{2} \end{relmatrix} - \begin{relmatrix}{2}{2} \relgen{1}{2}{2}{2} \relcoeff{1}{1} \relcoeff{2}{1} \end{relmatrix} - \begin{relmatrix}{2}{2} \relgen{2}{2}{2}{2} \relcoeff[2]{1}{1} \end{relmatrix} \right) - \left( \begin{relmatrix}{2}{2} \relgen{1}{1}{2}{2} \relcoeff{1}{1} \relcoeff{2}{2} \end{relmatrix} - \begin{relmatrix}{2}{2} \relgen{1}{1}{1}{2} \relcoeff{2}{1} \relcoeff{2}{2} \end{relmatrix} - \begin{relmatrix}{2}{2} \relgen{1}{1}{1}{1} \relcoeff[2]{2}{2} \end{relmatrix} \right) + \left( \begin{relmatrix}{2}{2} \relgen{1}{2}{2}{2} \relcoeff{1}{1} \relcoeff{2}{1} \end{relmatrix} - \begin{relmatrix}{2}{2} \relgen{1}{1}{1}{2} \relcoeff{2}{1} \relcoeff{2}{2} \end{relmatrix} \right) \\ {} = \begin{relmatrix}{2}{2} \relgen{1}{1}{1}{1} \relcoeff[2]{2}{2} \end{relmatrix} - \begin{relmatrix}{2}{2} \relgen{2}{2}{2}{2} \relcoeff[2]{1}{1} \end{relmatrix}
\end{multline*}
and thus $\begin{relmatrix}{2}{2} \relgen{1}{1}{1}{1} \relcoeff[2]{2}{2} \end{relmatrix} \equiv \begin{relmatrix}{2}{2} \relgen{2}{2}{2}{2} \relcoeff[2]{1}{1} \end{relmatrix}$ modulo $L$. These are the only terms possible with this monomial degree.
\item $\begin{bmatrix} 1 & 1 \\ 1 & 1 \end{bmatrix}$: we have
\begin{align*}
\begin{relmatrix}{2}{2} \relgen{2}{1}{2}{2} \relcoeff{1}{1} \relcoeff{1}{2} \end{relmatrix} & \equiv \begin{relmatrix}{2}{2} \relgen{1}{2}{2}{2} \relcoeff{1}{1} \relcoeff{2}{1} \end{relmatrix} \equiv \begin{relmatrix}{2}{2} \relgen{1}{1}{1}{2} \relcoeff{2}{1} \relcoeff{2}{2} \end{relmatrix} \equiv \begin{relmatrix}{2}{2} \relgen{1}{1}{2}{1} \relcoeff{1}{2} \relcoeff{2}{2} \end{relmatrix} && \text{(by $\rho_3$)}
\end{align*}
\end{itemize}

\item $(2+2, \ 2+1+1)$. The possible monomial degrees are:
\begin{itemize}
\item $\begin{bmatrix} 2 & & \\ & 1 & 1 \end{bmatrix}$: we can use $\rho_5$, $\rho_4$ and $\rho_3$ to write:
\begin{multline*}
\left( \begin{relmatrix}{2}{3} \relgen{1}{1}{2}{2} \relcoeff{1}{1} \relcoeff{2}{3} \end{relmatrix} - \begin{relmatrix}{2}{3} \relgen{2}{1}{2}{3} \relcoeff{1}{1} \relcoeff{1}{2} \end{relmatrix} - \begin{relmatrix}{2}{3} \relgen{2}{2}{2}{3} \relcoeff[2]{1}{1} \end{relmatrix} \right) - \left( \begin{relmatrix}{2}{3} \relgen{1}{1}{2}{2} \relcoeff{1}{1} \relcoeff{2}{3} \end{relmatrix} - \begin{relmatrix}{2}{3} \relgen{1}{1}{1}{1} \relcoeff{2}{2} \relcoeff{2}{3} \end{relmatrix} - \begin{relmatrix}{2}{3} \relgen{1}{1}{2}{1} \relcoeff{1}{2} \relcoeff{2}{3} \end{relmatrix} \right) \\ {} + \left( \begin{relmatrix}{2}{3} \relgen{2}{1}{2}{3} \relcoeff{1}{1} \relcoeff{1}{2} \end{relmatrix} - \begin{relmatrix}{2}{3} \relgen{1}{1}{2}{1} \relcoeff{1}{2} \relcoeff{2}{3} \end{relmatrix} \right) = \begin{relmatrix}{2}{3} \relgen{1}{1}{1}{1} \relcoeff{2}{2} \relcoeff{2}{3} \end{relmatrix} - \begin{relmatrix}{2}{3} \relgen{2}{2}{2}{3} \relcoeff[2]{1}{1} \end{relmatrix}
\end{multline*}
so $\begin{relmatrix}{2}{3} \relgen{1}{1}{1}{1} \relcoeff{2}{2} \relcoeff{2}{3} \end{relmatrix} \equiv \begin{relmatrix}{2}{3} \relgen{2}{2}{2}{3} \relcoeff[2]{1}{1} \end{relmatrix}$ modulo $L$, and these are the only terms with this monomial degree.
\item $\begin{bmatrix} 1 & 1 & \\ 1 & & 1 \end{bmatrix}$: we have
\begin{align*}
\begin{relmatrix}{2}{3} \relgen{1}{1}{1}{2} \relcoeff{2}{1} \relcoeff{2}{3} \end{relmatrix} & \equiv \begin{relmatrix}{2}{3} \relgen{1}{1}{2}{1} \relcoeff{1}{2} \relcoeff{2}{3} \end{relmatrix} \equiv \begin{relmatrix}{2}{3} \relgen{2}{1}{2}{3} \relcoeff{1}{1} \relcoeff{1}{2} \end{relmatrix} && \text{(by $\rho_3$)}
\end{align*}
\end{itemize}

\item $(2+2, \ 1+1+1+1)$. The only monomial degree up to symmetry is $\begin{bmatrix} 1 & 1 & & \\ & & 1 & 1 \end{bmatrix}$. We can form the following sum using $\rho_5$:
\begin{multline*}
\left(
\begin{relmatrix}{2}{4} \relcoeff{1}{1} \relcoeff{2}{4} \relgen{1}{2}{2}{3} \end{relmatrix} - \begin{relmatrix}{2}{4} \relcoeff{1}{1} \relcoeff{1}{2} \relgen{2}{3}{2}{4} \end{relmatrix} - \begin{relmatrix}{2}{4} \relcoeff{1}{1} \relcoeff{1}{3} \relgen{2}{2}{2}{4} \end{relmatrix} \right) - \left( \begin{relmatrix}{2}{4} \relcoeff{1}{1} \relcoeff{2}{4} \relgen{1}{2}{2}{3} \end{relmatrix} - \begin{relmatrix}{2}{4} \relcoeff{2}{3} \relcoeff{2}{4} \relgen{1}{1}{1}{2} \end{relmatrix} - \begin{relmatrix}{2}{4} \relcoeff{2}{2} \relcoeff{2}{4} \relgen{1}{1}{1}{3} \end{relmatrix} \right) \\ {} + \left( \begin{relmatrix}{2}{4} \relcoeff{1}{3} \relcoeff{2}{4} \relgen{1}{1}{2}{2} \end{relmatrix} - \begin{relmatrix}{2}{4} \relcoeff{2}{1} \relcoeff{2}{4} \relgen{1}{2}{1}{3} \end{relmatrix} - \begin{relmatrix}{2}{4} \relcoeff{2}{2} \relcoeff{2}{4} \relgen{1}{1}{1}{3} \end{relmatrix} \right) - \left( \begin{relmatrix}{2}{4} \relcoeff{1}{3} \relcoeff{2}{4} \relgen{1}{1}{2}{2} \end{relmatrix} - \begin{relmatrix}{2}{4} \relcoeff{1}{2} \relcoeff{1}{3} \relgen{2}{1}{2}{4} \end{relmatrix} - \begin{relmatrix}{2}{4} \relcoeff{1}{1} \relcoeff{1}{3} \relgen{2}{2}{2}{4} \end{relmatrix} \right) \\ {} + \left( \begin{relmatrix}{2}{4} \relcoeff{1}{2} \relcoeff{2}{4} \relgen{1}{1}{2}{3} \end{relmatrix} -  \begin{relmatrix}{2}{4} \relcoeff{1}{2} \relcoeff{1}{3} \relgen{2}{1}{2}{4} \end{relmatrix} - \begin{relmatrix}{2}{4} \relcoeff{1}{1} \relcoeff{1}{2} \relgen{2}{3}{2}{4} \end{relmatrix} \right) - \left( \begin{relmatrix}{2}{4} \relcoeff{1}{2} \relcoeff{2}{4} \relgen{1}{1}{2}{3} \end{relmatrix} - \begin{relmatrix}{2}{4} \relcoeff{2}{1} \relcoeff{2}{4} \relgen{1}{2}{1}{3} \end{relmatrix} - \begin{relmatrix}{2}{4} \relcoeff{2}{3} \relcoeff{2}{4} \relgen{1}{1}{1}{2} \end{relmatrix} \right) \\ {} = 2 \left( \begin{relmatrix}{2}{4}
\relcoeff{2}{3} \relcoeff{2}{4} \relgen{1}{1}{1}{2} \end{relmatrix} - \begin{relmatrix}{2}{4} \relcoeff{1}{1} \relcoeff{1}{2} \relgen{2}{3}{2}{4} \end{relmatrix} \right)
\end{multline*}
and since  $\chartext{\bk} \neq 2$, we must have $\begin{relmatrix}{2}{4} \relcoeff{2}{3} \relcoeff{2}{4} \relgen{1}{1}{1}{2} \end{relmatrix} \equiv \begin{relmatrix}{2}{4} \relcoeff{1}{1} \relcoeff{1}{2} \relgen{2}{3}{2}{4} \end{relmatrix}$ modulo $L$. (This is the only place in the argument where the hypothesis that $\chartext{\bk} \neq 2$ is used.)

\item $(2+1+1, \ 2+1+1)$. The possible monomial degrees here are
\begin{itemize}
\item $\begin{bmatrix} 2 \\ & 1 \\ & & 1 \end{bmatrix}$: in this case, the only possible term is $\begin{relmatrix}{3}{3} \relgen{1}{1}{1}{1} \relcoeff{2}{2} \relcoeff{3}{3} \end{relmatrix}$.
\item $\begin{bmatrix} 1 & 1 \\ 1 \\ & & 1 \end{bmatrix}$: we have
\begin{align*}
\begin{relmatrix}{3}{3} \relgen{1}{1}{1}{2} \relcoeff{2}{1} \relcoeff{3}{3} \end{relmatrix} & \equiv \begin{relmatrix}{3}{3} \relgen{1}{1}{2}{1} \relcoeff{1}{2} \relcoeff{3}{3} \end{relmatrix} && \text{(by $\rho_3$)}
\end{align*}
\item $\begin{bmatrix} & 1 & 1 \\ 1 \\ 1 \end{bmatrix}$: using $\rho_5$ and $\rho_3$ we can write
\begin{multline*}
\left( \begin{relmatrix}{3}{3} \relgen{1}{1}{2}{2} \relcoeff{1}{3} \relcoeff{3}{1} \end{relmatrix} - \begin{relmatrix}{3}{3} \relgen{1}{1}{3}{1} \relcoeff{1}{3} \relcoeff{2}{2} \end{relmatrix} - \begin{relmatrix}{3}{3} \relgen{2}{1}{3}{1} \relcoeff{1}{2} \relcoeff{1}{3} \end{relmatrix} \right) - \left( \begin{relmatrix}{3}{3} \relgen{1}{1}{2}{2} \relcoeff{1}{3} \relcoeff{3}{1} \end{relmatrix} - \begin{relmatrix}{3}{3} \relgen{1}{1}{1}{3} \relcoeff{2}{2} \relcoeff{3}{1} \end{relmatrix} - \begin{relmatrix}{3}{3} \relgen{1}{2}{1}{3} \relcoeff{2}{1} \relcoeff{3}{1} \end{relmatrix} \right) \\ {} + \left( \begin{relmatrix}{3}{3} \relgen{1}{1}{3}{1} \relcoeff{1}{3} \relcoeff{2}{2} \end{relmatrix} - \begin{relmatrix}{3}{3} \relgen{1}{1}{1}{3} \relcoeff{2}{2} \relcoeff{3}{1} \end{relmatrix} \right) = \begin{relmatrix}{3}{3} \relgen{1}{2}{1}{3} \relcoeff{2}{1} \relcoeff{3}{1} \end{relmatrix} - \begin{relmatrix}{3}{3} \relgen{2}{1}{3}{1} \relcoeff{1}{2} \relcoeff{1}{3} \end{relmatrix}
\end{multline*}
so $\begin{relmatrix}{3}{3} \relgen{1}{2}{1}{3} \relcoeff{2}{1} \relcoeff{3}{1} \end{relmatrix} \equiv \begin{relmatrix}{3}{3} \relgen{2}{1}{3}{1} \relcoeff{1}{2} \relcoeff{1}{3} \end{relmatrix}$ modulo $L$.
\end{itemize}

\item $(2+1+1, \ 1+1+1+1)$. Up to symmetry, the only possible monomial degree is $\begin{bmatrix} 1 & 1 \\ & & 1 \\ & & & 1 \end{bmatrix}$, and $\begin{relmatrix}{3}{4} \relgen{1}{1}{1}{2} \relcoeff{2}{3} \relcoeff{3}{4} \end{relmatrix}$ is the only term with this degree.
\end{itemize}
This covers all cases, up to symmetry.
\end{proof}

We now generalize this to all plural relations.

\begin{proposition} \label{thm:linear-generate-all-plural}
If $\chartext(\bk) \neq 2$, then all relations of plural multidegree are generated by the linear relations.
\end{proposition}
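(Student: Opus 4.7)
The plan is to deduce \cref{thm:linear-generate-all-plural} from \cref{thm:linear-relations-generate-quadratic} by induction on the standard degree $m$ of the relation, using \cref{thm:all-relations-are-monomial} to cut down to terms built from monomial generators.

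Given a relation $R$ of plural multidegree, I would first invoke \cref{thm:all-relations-are-monomial} to replace $R$ modulo $L$ by a sum $\sum_i c_i f_i(g_i)$ in which each $g_i$ is a monomial generator and each $f_i$ is a monomial. Each such term has a well-defined monomial degree (that of $f_i g_i$), and since the image of $R$ in $F_0 = S$ vanishes, this sum splits into monomial-degree-homogeneous pieces, each of which is itself a relation. It therefore suffices to show: for every plural monomial degree $\nu$ of standard degree $m \geq 4$, any two terms $f(g)$ and $f'(g')$ of monomial degree $\nu$ (with $g, g'$ monomial generators) are equivalent modulo $L$. Indeed, a relation of fixed monomial degree $\nu$ then collapses modulo $L$ to a scalar multiple of a single representative $T_\nu$; comparing with the vanishing image in $S$ forces the scalar to be $0$, placing the relation in $L$.

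I would prove this reduction by induction on $m$, with base case $m = 4$ supplied directly by \cref{thm:linear-relations-generate-quadratic} (the only place where $\chartext(\bk) \neq 2$ is used). For the inductive step, suppose $m \geq 5$. Regarding $\nu$ as a multiset of $m$ variables (with multiplicity), each of $f$ and $f'$ uses exactly $m-2$ slots, so inclusion-exclusion forces at least $(m-2) + (m-2) - m = m - 4 \geq 1$ slots in common. Picking a variable $X_{a,b}$ that divides both $f$ and $f'$, I would write $f = X_{a,b}\tilde{f}$ and $f' = X_{a,b}\tilde{f}'$, reducing the problem to showing $\tilde{f}(g) \equiv \tilde{f}'(g') \pmod{L}$. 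Both of these terms have common monomial degree $\nu - e_{a,b}$ and standard degree $m-1$, and their multidegrees remain plural because $g$ and $g'$ are monomial generators (whose own multidegrees already carry a row- or column-sum equal to $2$). The induction hypothesis then delivers $\tilde{f}(g) \equiv \tilde{f}'(g') \pmod{L}$, and multiplying through by $X_{a,b}$, which preserves $L$, closes the step.

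The substantive work is concentrated in the base case \cref{thm:linear-relations-generate-quadratic}, which has already been handled; the inductive step is then just a pigeonhole-and-lifting argument requiring no further case analysis. The only potential subtlety is confirming that plurality of the multidegree is preserved under removing the variable $X_{a,b}$, and this follows automatically from the restriction to monomial generators $g$.
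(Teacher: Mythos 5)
Your proposal is correct and takes essentially the same approach as the paper: reduce to monomial generators via \cref{thm:all-relations-are-monomial}, then use the pigeonhole count on the $m-2$ variables in each coefficient to find shared variables and reduce to the degree-$4$ case handled by \cref{thm:linear-relations-generate-quadratic}. The only cosmetic difference is that you peel off one common variable at a time by induction on $m$, whereas the paper factors out a single degree-$(m-4)$ common monomial in one step; both hinge on the same observation that plurality is preserved because the monomial generators themselves contribute a row- or column-degree of $2$.
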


\begin{proof}
By using \cref{thm:all-relations-are-monomial}, it suffices to consider relations involving monomial generators.  As in the proof of \cref{thm:linear-relations-generate-quadratic}, it suffices to show that any two elements $h_1=f_1(g_1)$ and $h_2=f_2(g_2)$ of the same monomial degree, where each $f_i$ is a monomial and each $g_i$ is a monomial quadratic generator, are equivalent modulo $L$. Since $h_1$ and $h_2$ have the same monomial degree, they involve the same variables, with multiplicity.  By \cref{thm:linear-generate-all-singular} and \cref{thm:linear-relations-generate-quadratic}, we may assume that the standard degree $m$ of $h_1$ and $h_2$ is greater than $4$. Note that exactly two variables appear in each generator, so $f_1$ and $f_2$ each involve $m-2$ variables. Therefore, by the pigeonhole principle, at least $m-4$ of the variables must appear in both $f_1$ and $f_2$. Thus we can write
\begin{equation*}
h_1 - h_2 = p \cdot (h'_1 - h'_2)
\end{equation*}
where $p$ is a degree $m-4$ monomial in $S$, and $h'_1$ and $h'_2$ are elements in $F_1$ which have plural multidegree and standard degree $4$, and only involve monomial generators. But \cref{thm:linear-relations-generate-quadratic} says that $h'_1$ and $h'_2$ are equivalent modulo $L$, and therefore $h_1$ and $h_2$ are equivalent modulo $L$. 
\end{proof}

This finally establishes our main result:

\begin{proof}[Proof of \cref{main-theorem}\ref{main-theorem-a}]
\Cref{thm:linear-generate-all-singular} covers the singular case, and \cref{thm:linear-generate-all-plural} covers the plural case.  The dimension of the space of linear relations is computed in \cref{thm:beta23}.
\end{proof}

\begin{corollary}
If $\chartext(\bk) \neq 2$, then all Betti numbers $\beta_{2,j}$ for $j \neq 3$ are zero. 
\end{corollary}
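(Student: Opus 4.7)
The plan is to read this off directly from \cref{main-theorem}\ref{main-theorem-a} together with the convention on standard degree established in \cref{S:gradings}. Recall that in the minimal graded free resolution \eqref{E:free-det}, the Betti number $\beta_{2,j}$ is by definition the number of degree-$j$ summands of $F_1$, which in turn equals the dimension of the degree-$j$ piece of any minimal homogeneous generating set of $\ker(d_1) \subseteq F_1$.

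By \cref{main-theorem}\ref{main-theorem-a}, when $\chartext(\bk) \neq 2$ the module $\ker(d_1)$ admits a minimal generating set consisting entirely of linear relations. As noted in \cref{S:gradings}, a linear relation has standard degree $1 + 2 = 3$, since its $S$-coefficients have standard degree $1$ and each generator of $\det_n^\perp$ from \cref{thm:Shafiei} has standard degree $2$. Hence every element of this minimal generating set is homogeneous of standard degree $3$, so $\beta_{2,j} = 0$ for all $j \neq 3$.

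There is essentially no obstacle here: the corollary is a bookkeeping statement about what ``linear'' means in the conventions of the paper, applied to the main theorem. The only thing to double-check is the standard-degree shift in $F_1$, which is precisely the subtlety flagged at the end of \cref{S:gradings}.
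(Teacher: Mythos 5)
Your argument is correct and is exactly the (unwritten) justification the paper intends for this corollary: \cref{main-theorem}\ref{main-theorem-a} provides a minimal generating set of $\ker(d_1)$ consisting solely of linear relations, which after the degree shift sit in standard degree $2+1=3$, so $\beta_{2,j}=0$ for $j\neq 3$. One small slip in your first sentence: $\beta_{2,j}$ counts the degree-$j$ summands of $F_2$, not $F_1$ --- though your subsequent identification of $\beta_{2,j}$ with the number of degree-$j$ elements in a minimal homogeneous generating set of $\ker(d_1)\subseteq F_1$ is the correct reading and is what your proof actually uses.
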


\section{Analogous results for the permanent} \label{sec:permanent}

Most of the results from \cref{sec:singular-multidegree,sec:plural-multidegree} also apply to the apolar ideal of $\perm_n$ with some slight modifications.
Recall from \cref{thm:Shafiei} that the generators of $\perm_n^\perp$ include polynomials of the form $X_{i,j} X_{k,l} - X_{i,l} X_{k,j}$ (instead of $X_{i,j} X_{k,l} + X_{i,l} X_{k,j}$) as well as the monomial generators $X_{i,j}^2$, $X_{i,j} X_{i,k}$ and $X_{i,j} X_{k,j}$.  The main result of this section is \cref{thm:linear+Q-generate-all-permanent} characterizing the module of relations between these generators.  

\subsection{Keeping track of parity}
In \cref{sec:singular-multidegree}, we associated singular multidegree terms with generator $X_{i,j} X_{k,l} + X_{i,l} X_{k,j}$ to edges of the Cayley graph $\Gamma(\bS_m)$, but with the generator $X_{i,j} X_{k,l} - X_{i,l} X_{k,j}$ instead, we must take care to distinguish between $X_{i,j} X_{k,l} - X_{i,l} X_{k,j}$ and its negative, $X_{i,l} X_{k,j} - X_{i,j} X_{k,l}$. To ensure consistency, given a term $f \cdot (X_{i,j} X_{k,l} - X_{i,l} X_{k,j})$ where $f$ is a monomial, we choose its sign so that the monomials $f X_{i,j} X_{k,l}$ and $f X_{i,l} X_{k,j}$ get the same sign as the parity of the corresponding permutations of $\bS_m$.

With this convention, the coefficient of a monomial corresponding to an odd permutation is the \emph{negative} of the sum of the weights of edges meeting the corresponding vertex of the Cayley graph. But this still gives rise to a relation if and only if these weights sum to zero at every vertex, so relations still correspond to zero-magic graphs.  Thus \cref{lem:relations} still holds and the rest of \cref{sec:singular-multidegree} applies intact.  
In particular, the relation corresponding to the prototypical commutator cycle $(1 \ 2) (1 \ 3) (1 \ 2) (2 \ 3)$ is
\begin{multline*}
\rho_S' = X_{3,3} (X_{1,1} X_{2,2} - X_{1,2} X_{2,1}) - X_{1,2} (X_{2,3} X_{3,1} - X_{2,1} X_{3,3}) \\ {} + X_{2,3} (X_{1,2} X_{3,1} - X_{1,1} X_{3,2}) - X_{1,1} (X_{2,2} X_{3,3} - X_{2,3} X_{3,2}).
\end{multline*}

This issue of parity applies to the dots-and-rectangles notation as well. To remove ambiguity, when discussing the permanent, the generator $X_{i,j} X_{k,l} - X_{i,l} X_{k,j}$ will be shown as a rectangle with a dotted line connecting the corners from the monomial with positive sign. Thus
\begin{align*}
(X_{1,1} X_{2,2} - X_{1,2} X_{2,1}) & = \begin{relmatrix}{2}{2} \relgenperm{1}{1}{2}{2} \end{relmatrix} \\
(X_{1,2} X_{2,1} - X_{1,1} X_{2,2}) & = \begin{relmatrix}{2}{2} \relgenpermT{1}{1}{2}{2} \end{relmatrix} = - \begin{relmatrix}{2}{2} \relgenperm{1}{1}{2}{2} \end{relmatrix}
\end{align*}

\subsection{Linear relations}
In \cref{sec:plural-multidegree}, we must modify the six linear relations from \cref{thm:linear-relations-list} to the linear relations:
\begin{equation} \label{E:linear-perm}
\begin{aligned}
\rho_1' &= X_{1,2} (X_{1,1}^2) - X_{1,1} (X_{1,1} X_{1,2}) \\
\rho_2' &= X_{1,3} (X_{1,1} X_{1,2}) - X_{1,2} (X_{1,1} X_{1,3}) \\
\rho_3' &= X_{2,1} (X_{1,1} X_{1,2}) - X_{1,2} (X_{1,1} X_{2,1}) \\
\rho_4' &= X_{1,1} (X_{1,1} X_{2,2} - X_{1,2} X_{2,1}) + X_{2,1} (X_{1,1} X_{1,2}) - X_{2,2} (X_{1,1}^2) \\
\rho_5' &= X_{1,3} (X_{1,1} X_{2,2} - X_{1,2} X_{2,1}) - X_{2,2} (X_{1,1} X_{1,3}) + X_{2,1} (X_{1,2} X_{1,3}) \\
\rho_S' &= \begin{multlined}[t] X_{3,3} (X_{1,1} X_{2,2} - X_{1,2} X_{2,1}) - X_{1,2} (X_{2,3} X_{3,1} - X_{2,1} X_{3,3}) \\[2.5pt] {} + X_{2,3} (X_{1,2} X_{3,1} - X_{1,1} X_{3,2}) - X_{1,1} (X_{2,2} X_{3,3} - X_{2,3} X_{3,2}).
\end{multlined}
\end{aligned}
\end{equation}
\cref{thm:linear-relations-list,thm:beta23,thm:all-relations-are-monomial} hold with only minor modifications.

However, \cref{thm:linear-relations-generate-quadratic} is not true for the permanent. Recall also that \cref{thm:linear-relations-generate-quadratic} breaks down if $\chartext(\bk)=2$, and the place where the proof breaks is showing that $\begin{relmatrix}{2}{4} \relcoeff{2}{3} \relcoeff{2}{4} \relgen{1}{1}{1}{2} \end{relmatrix} \equiv \begin{relmatrix}{2}{4} \relcoeff{1}{1} \relcoeff{1}{2} \relgen{2}{3}{2}{4} \end{relmatrix}$ modulo $L$ in the case of multidegree $(2+2, \ 1+1+1+1)$. It turns out that the for the apolar ideal $\perm_n^{\perp}$ in arbitrary characteristic  (as well as $\det_n^{\perp}$ if $\chartext(\bk)=2$), there are quadratic relations not generated by linear relations.

\begin{proposition} \label{thm:quadratic-generator-for-perm}
For $\perm_n^\perp$ in arbitrary characteristic, the quadratic relation
\begin{equation} \label{E:quadratic-perm}
\rho_Q' = X_{2,3} X_{2,4} (X_{1,1} X_{1,2}) - X_{1,1} X_{1,2} (X_{2,3} X_{2,4}) = \begin{relmatrix}{2}{4} \relcoeff{2}{3} \relcoeff{2}{4} \relgen{1}{1}{1}{2} \end{relmatrix} - \begin{relmatrix}{2}{4} \relcoeff{1}{1} \relcoeff{1}{2} \relgen{2}{3}{2}{4} \end{relmatrix}
\end{equation}
is \emph{not} generated by the linear relations \eqref{E:linear-perm}.
\end{proposition}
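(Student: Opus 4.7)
Let $\mu = 2e_1 + 2e_2 + f_1 + f_2 + f_3 + f_4$ denote the multidegree of $\rho_Q'$. The plan is to construct a $\bk$-linear map $\pi_R$ on $(F_1)_\mu$ such that $\pi_R(\rho_Q') \notin \pi_R(L_\mu)$; this suffices since $\rho_Q' \in L$ would force $\rho_Q' \in L_\mu$ by multidegree homogeneity.

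First, a multidegree analysis analogous to the proof of \cref{thm:linear-relations-list} shows that the only linear relations which, after multiplication by a single variable $X_{a,b}$, land in multidegree $\mu$ are permutations of $\rho_5'$, whose multidegree is $(2{+}1, 1{+}1{+}1)$. (For $\rho_1'$--$\rho_4'$ and $\rho_S'$ and their transposes, no single $X_{a,b}$ can transform their multidegree into $\mu$: the row parts $3$, $2{+}1$, $1{+}1{+}1$ combined with one extra $e_a$ simply cannot produce the row profile $2{+}2$, except in the case $(2{+}1,1{+}1{+}1)$.) This gives $24$ generators
\[
\Psi^{(a,\gamma,\{\alpha,\beta\})} = X_{a',q} \cdot \rho_5'^{(a,\gamma,\{\alpha,\beta\})}
\]
of $L_\mu$, indexed by the distinguished row $a \in \{1,2\}$, distinguished column $\gamma \in \{1,2,3,4\}$, and the unordered pair $\{\alpha, \beta\} \subset \{1,2,3,4\} \setminus \{\gamma\}$; here $a' = 3-a$ and $q$ is the column missing from $\{\gamma, \alpha, \beta\}$.

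Let $\pi_R$ denote the $\bk$-linear projection of $(F_1)_\mu$ onto the $12$-dimensional subspace $R$ spanned by basis elements $f \cdot e_g$ where $g$ is a row-monomial generator $X_{i,j}X_{i,k}$; column-monomial and square generators do not contribute to multidegree $\mu$ at all, and the $12$ basis elements with non-monomial generators are killed by $\pi_R$. Split $R = R^{(1)} \oplus R^{(2)}$ by which row $i$ the generator lives in, and denote by $R^{(a)}_{\{j,k\}}$ the unique basis element of $R^{(a)}$ whose generator is $X_{a,j}X_{a,k}$. A direct expansion of the three terms of $\Psi$ (the first, non-monomial-generator, term is killed by $\pi_R$) yields
\[
\pi_R\bigl(\Psi^{(a,\gamma,\{\alpha,\beta\})}\bigr) = R^{(a)}_{\{\beta,\gamma\}} - R^{(a)}_{\{\alpha,\gamma\}},
\]
an edge difference in the Johnson graph $J(4,2)$ whose vertices are the six $2$-subsets of $\{1,2,3,4\}$ and edges connect subsets sharing exactly one element. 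Since $J(4,2)$ is connected, the $12$ edge-differences for each fixed $a$ span exactly the $5$-dimensional hyperplane $H^{(a)} \subset R^{(a)}$ cut out by $\sum_{\{j,k\}} c_{jk} = 0$, so $\pi_R(L_\mu) = H^{(1)} \oplus H^{(2)}$ has dimension $10$.

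On the other hand, $\pi_R(\rho_Q') = R^{(1)}_{\{1,2\}} - R^{(2)}_{\{3,4\}}$, whose $R^{(1)}$-component has coefficient-sum $1 \neq 0$, so $\pi_R(\rho_Q') \notin H^{(1)} \oplus H^{(2)} = \pi_R(L_\mu)$, and therefore $\rho_Q' \notin L$. The most delicate step is the multidegree bookkeeping that rules out the other five families of linear relations and verifies the explicit formula for $\pi_R(\Psi^{(a,\gamma,\{\alpha,\beta\})})$; the conceptual core is the appearance of the Johnson graph, whose connectivity pins $\pi_R(L_\mu)$ to the sum-zero hyperplane while $\pi_R(\rho_Q')$ is precisely a ``boundary'' element lying outside it.
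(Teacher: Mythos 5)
Your proof is correct and follows the same overall strategy as the paper's: restrict to the multidegree component $(2{+}2,\ 1{+}1{+}1{+}1)$, observe that only single-variable multiples of row/column permutations of $\rho_5'$ can contribute to $L_\mu$, and then find an obstruction supported on the terms built from the row-monomial generators $X_{i,j}X_{i,k}$. The difference is in how that obstruction is packaged: the paper pairs up the two $\rho_5'$-multiples containing each non-monomial term so as to cancel those terms, and then argues somewhat informally that the resulting ``column-swapping'' relations cannot reduce $\rho_Q'$ to zero, whereas you project the non-monomial terms away and note that every generator of $\pi_R(L_\mu)$ is an edge-difference in $J(4,2)$ and hence lies in the coordinate-sum-zero hyperplane $H^{(a)}$ of each $R^{(a)}$, while $\pi_R(\rho_Q')$ has $R^{(1)}$-coefficient sum equal to $1$. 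This linear-functional formulation is a clean, fully rigorous version of the paper's final step (note that only the inclusion $\pi_R(L_\mu)\subseteq H^{(1)}\oplus H^{(2)}$ is used, so the connectivity of $J(4,2)$ and the exact dimension $10$ are decoration). One small correction to your parenthetical: $\rho_3'$ and $\rho_4'$ have row profile $2{+}1$, which \emph{can} become $2{+}2$ after multiplying by one variable; they are excluded because their column profile $2{+}1$ cannot become $1{+}1{+}1{+}1$. This does not affect the conclusion.
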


\begin{proof}
Let $L'$ be the submodule generated by the orbit of $\rho_1', \dotsc, \rho_5', \rho_S'$ (analogous to $L$, defined earlier).  The multidegree of the relation $\rho_Q'$ is $(2+2, \ 1+1+1+1)$. Therefore, if this relation can be written as a homogeneous sum of the linear relations, the only linear relation that can be involved is $\rho_5'$: the relations $\rho_1'$ and $\rho_2'$ have multidegree $3$ in one row, $\rho_3'$ and $\rho_4'$ have multidegree $2$ in both a row and a column, and $\rho_S'$ has positive multidegree in three rows and three columns.

Under the action of $\bS_2 \times \bS_4$ on the $2 \times 4$ submatrix where this relation lives, the possible terms with this multidegree form two distinct orbits:
\begin{equation*}
\begin{relmatrix}{2}{4} \relgen{1}{1}{1}{2} \relcoeff{2}{3} \relcoeff{2}{4} \end{relmatrix} \quad \text{and} \quad \begin{relmatrix}{2}{4} \relgenperm{1}{2}{2}{3} \relcoeff{1}{1} \relcoeff{2}{4} \end{relmatrix}
\end{equation*}
are representative elements of each.

The orbit of the linear relation $\rho_5'$ can generate only two relations (up to scaling) that contain the term $\begin{relmatrix}{2}{4} \relgenperm{1}{2}{2}{3} \relcoeff{1}{1} \relcoeff{2}{4} \end{relmatrix}$, made by taking each of the dots as constant: they are
\begin{align*}
\begin{relmatrix}{2}{4} \relgenperm{1}{2}{2}{3} \relcoeff{1}{1} \relcoeff{2}{4} \end{relmatrix} - \begin{relmatrix}{2}{4} \relgen{1}{1}{1}{2} \relcoeff{2}{3} \relcoeff{2}{4} \end{relmatrix} + \begin{relmatrix}{2}{4} \relgen{1}{1}{1}{3} \relcoeff{2}{2} \relcoeff{2}{4} \end{relmatrix} & = X_{2,4} \left( \begin{relmatrix}{2}{4} \relgenperm{1}{2}{2}{3} \relcoeff{1}{1} \end{relmatrix} - \begin{relmatrix}{2}{4} \relgen{1}{1}{1}{2} \relcoeff{2}{3} \end{relmatrix} + \begin{relmatrix}{2}{4} \relgen{1}{1}{1}{3} \relcoeff{2}{2} \end{relmatrix} \right) \\
\shortintertext{and}
\begin{relmatrix}{2}{4} \relgenperm{1}{2}{2}{3} \relcoeff{1}{1} \relcoeff{2}{4} \end{relmatrix} - \begin{relmatrix}{2}{4} \relgen{2}{3}{2}{4} \relcoeff{1}{1} \relcoeff{1}{2} \end{relmatrix} + \begin{relmatrix}{2}{4} \relgen{2}{2}{2}{4} \relcoeff{1}{1} \relcoeff{1}{3} \end{relmatrix} & = X_{1,1} \left( \begin{relmatrix}{2}{4} \relgenperm{1}{2}{2}{3} \relcoeff{2}{4} \end{relmatrix} - \begin{relmatrix}{2}{4} \relgen{2}{3}{2}{4} \relcoeff{1}{2} \end{relmatrix} + \begin{relmatrix}{2}{4} \relgen{2}{2}{2}{4} \relcoeff{1}{3} \end{relmatrix} \right).
\end{align*}
Suppose some $\bk$-linear combination of permutations of these relations gives $\rho_Q'$. Since $\begin{relmatrix}{2}{4} \relgenperm{1}{2}{2}{3} \relcoeff{1}{1} \relcoeff{2}{4} \end{relmatrix}$ does not appear in $\rho_Q'$, if one of these has scalar coefficient $c$, the other must have coefficient $-c$.

Therefore, if $\rho_Q'$ is generated by the linear relations, it must be a $\bk$-linear combination of the $\mathbf S_2 \times \mathbf S_4$ orbit of
\begin{multline*}
\left( \begin{relmatrix}{2}{4} \relgenperm{1}{2}{2}{3} \relcoeff{1}{1} \relcoeff{2}{4} \end{relmatrix} - \begin{relmatrix}{2}{4} \relgen{2}{3}{2}{4} \relcoeff{1}{1} \relcoeff{1}{2} \end{relmatrix} + \begin{relmatrix}{2}{4} \relgen{2}{2}{2}{4} \relcoeff{1}{1} \relcoeff{1}{3} \end{relmatrix} \right) - \left( \begin{relmatrix}{2}{4} \relgenperm{1}{2}{2}{3} \relcoeff{1}{1} \relcoeff{2}{4} \end{relmatrix} - \begin{relmatrix}{2}{4} \relgen{1}{1}{1}{2} \relcoeff{2}{3} \relcoeff{2}{4} \end{relmatrix} + \begin{relmatrix}{2}{4} \relgen{1}{1}{1}{3} \relcoeff{2}{2} \relcoeff{2}{4} \end{relmatrix} \right) \qquad \qquad \\
= \begin{relmatrix}{2}{4} \relgen{1}{1}{1}{2} \relcoeff{2}{3} \relcoeff{2}{4} \end{relmatrix} - \begin{relmatrix}{2}{4} \relgen{2}{3}{2}{4} \relcoeff{1}{1} \relcoeff{1}{2} \end{relmatrix} - \begin{relmatrix}{2}{4} \relgen{1}{1}{1}{3} \relcoeff{2}{2} \relcoeff{2}{4} \end{relmatrix} + \begin{relmatrix}{2}{4} \relgen{2}{2}{2}{4} \relcoeff{1}{1} \relcoeff{1}{3} \end{relmatrix}.
\end{multline*}
This relation tells us that $\begin{relmatrix}{2}{4} \relgen{1}{1}{1}{2} \relcoeff{2}{3} \relcoeff{2}{4} \end{relmatrix} - \begin{relmatrix}{2}{4} \relgen{2}{3}{2}{4} \relcoeff{1}{1} \relcoeff{1}{2} \end{relmatrix} \equiv \begin{relmatrix}{2}{4} \relgen{1}{1}{1}{3} \relcoeff{2}{2} \relcoeff{2}{4} \end{relmatrix} - \begin{relmatrix}{2}{4} \relgen{2}{2}{2}{4} \relcoeff{1}{1} \relcoeff{1}{3} \end{relmatrix}$ modulo $L'$, thus permutations of this relation simply say that swapping columns of $\begin{relmatrix}{2}{4} \relgen{1}{1}{1}{2} \relcoeff{2}{3} \relcoeff{2}{4} \end{relmatrix} - \begin{relmatrix}{2}{4} \relgen{2}{3}{2}{4} \relcoeff{1}{1} \relcoeff{1}{2} \end{relmatrix}$ is allowed modulo $L'$. There is no way of reducing $\rho_Q'$ to $0$ by swapping columns, so since these are all the relations we have at our disposal, $\rho_Q'$ is not in $L'$.
\end{proof}

\begin{remark}
For the determinant, relations of this multidegree are linear combinations of
\begin{multline*}
\left( \begin{relmatrix}{2}{4} \relgen{1}{2}{2}{3} \relcoeff{1}{1} \relcoeff{2}{4} \end{relmatrix} - \begin{relmatrix}{2}{4} \relgen{2}{3}{2}{4} \relcoeff{1}{1} \relcoeff{1}{2} \end{relmatrix} - \begin{relmatrix}{2}{4} \relgen{2}{2}{2}{4} \relcoeff{1}{1} \relcoeff{1}{3} \end{relmatrix} \right) - \left( \begin{relmatrix}{2}{4} \relgen{1}{2}{2}{3} \relcoeff{1}{1} \relcoeff{2}{4} \end{relmatrix} - \begin{relmatrix}{2}{4} \relgen{1}{1}{1}{2} \relcoeff{2}{3} \relcoeff{2}{4} \end{relmatrix} - \begin{relmatrix}{2}{4} \relgen{1}{1}{1}{3} \relcoeff{2}{2} \relcoeff{2}{4} \end{relmatrix} \right) \qquad \qquad \\
= \begin{relmatrix}{2}{4} \relgen{1}{1}{1}{2} \relcoeff{2}{3} \relcoeff{2}{4} \end{relmatrix} - \begin{relmatrix}{2}{4} \relgen{2}{3}{2}{4} \relcoeff{1}{1} \relcoeff{1}{2} \end{relmatrix} + \begin{relmatrix}{2}{4} \relgen{1}{1}{1}{3} \relcoeff{2}{2} \relcoeff{2}{4} \end{relmatrix} - \begin{relmatrix}{2}{4} \relgen{2}{2}{2}{4} \relcoeff{1}{1} \relcoeff{1}{3} \end{relmatrix},
\end{multline*}
which means that swapping columns of $\begin{relmatrix}{2}{4} \relgen{1}{1}{1}{2} \relcoeff{2}{3} \relcoeff{2}{4} \end{relmatrix} - \begin{relmatrix}{2}{4} \relgen{2}{3}{2}{4} \relcoeff{1}{1} \relcoeff{1}{2} \end{relmatrix}$ while \emph{also reversing sign} is allowed modulo $L$. It is possible to swap columns of this relation an odd number of times and get back what we started with --- for example, we swap columns $2$ and $3$, then $1$ and $2$, then $1$ and $3$, which overall swaps columns $1$ and $2$, and has no effect --- so for the determinant, we see that $\begin{relmatrix}{2}{4} \relgen{1}{1}{1}{2} \relcoeff{2}{3} \relcoeff{2}{4} \end{relmatrix} - \begin{relmatrix}{2}{4} \relgen{2}{3}{2}{4} \relcoeff{1}{1} \relcoeff{1}{2} \end{relmatrix}$ is equivalent to its negative modulo $L$, thus it must be equivalent to $0$ except in characteristic $2$. This is how the decomposition in \cref{thm:linear-relations-generate-quadratic} was constructed, and why it was necessary to assume that $\chartext(\bk) \neq 2$.
\end{remark}

For the permanent, this exception is the only major modification we need to make to \cref{thm:linear-relations-generate-quadratic}.  In other words, the argument of \cref{thm:linear-relations-generate-quadratic} establishes that in any characteristic, all quadratic relations are generated by linear relations $\rho'_1, \cdots, \rho'_5, \rho'_S$ together with the quadratic relation $\rho'_Q$ under permuting rows and columns and transposing. 

\subsection{Relations of higher degree}

The proof of \cref{thm:linear-generate-all-plural} needs no changes as long as the statement is amended to include the quadratic relations obtained from $\rho'_Q$ from permuting and transposing.  This establishes the following theorem:

\begin{theorem} \label{thm:linear+Q-generate-all-permanent}
All relations in the apolar ideal of the permanent (or the determinant if $\chartext(\bk) = 2$) are generated by the orbit of the linear relations $\rho'_1, \cdots, \rho'_5, \rho'_S$ from \eqref{E:linear-perm} and the orbit of the quadratic relation $\rho_Q'$ from \eqref{E:quadratic-perm} under permuting rows and columns and transposing. \epf
\end{theorem}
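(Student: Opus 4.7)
The plan is to parallel the argument for the determinant given in \cref{sec:singular-multidegree,sec:plural-multidegree}, tracking signs carefully and identifying where the quadratic relation $\rho_Q'$ must intervene. The proof proceeds in three phases corresponding to the structure established for $\det_n^{\perp}$.

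First I would handle the singular multidegree case. As noted at the start of \cref{sec:permanent}, once the sign convention on $X_{i,j}X_{k,l} - X_{i,l}X_{k,j}$ is chosen to match the parity of the associated permutation in $\bS_m$, a relation of singular multidegree still corresponds to a zero-magic labeling of the Cayley graph $\Gamma(\bS_m)$. Thus \cref{lem:relations,thm:Doob,thm:commutators-generate-all-cycles} transfer verbatim, and the prototypical commutator labeling now produces $\rho_S'$ in place of $\rho_S$. Hence every singular-multidegree relation of $\perm_n^{\perp}$ lies in the submodule $L'$ generated by the $\bS_n \times \bS_n$-orbit (together with transposition) of $\rho_1', \dotsc, \rho_5', \rho_S'$.

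Second, for plural multidegree of standard degree $4$, I would retrace the case-by-case analysis of \cref{thm:linear-relations-generate-quadratic} with $\rho_i'$ in place of $\rho_i$. In each multidegree except $(2+2,\,1+1+1+1)$, the chain of congruences modulo $L'$ is formally identical (after adjusting signs), so the same equivalences between monomial-generator terms of equal monomial degree hold. The exceptional multidegree is precisely where the determinant proof produced $2(\text{difference})$ and relied on $\chartext(\bk)\neq 2$; for the permanent, the analogous combination collapses to $0$, which is the content of the obstruction proved in \cref{thm:quadratic-generator-for-perm}. Adjoining the orbit of $\rho_Q'$ supplies the missing congruence directly. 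I would also verify the permanent version of \cref{thm:all-relations-are-monomial}, which reduces to checking that each of the four non-monomial terms listed there is congruent modulo $L'$ to a combination of monomial-generator terms; the calculations go through using $\rho_4'$, $\rho_5'$ and $\rho_S'$ with no additional input beyond signs. Consequently, every plural quadratic relation lies in $L' + \langle \rho_Q'\rangle$ (up to the $\bS_n\times\bS_n$ action and transposition).

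Third, to extend from standard degree $4$ to arbitrary $m>4$, I would repeat verbatim the pigeonhole argument of \cref{thm:linear-generate-all-plural}: any two monomial-generator terms $h_1 = f_1(g_1)$ and $h_2 = f_2(g_2)$ of the same monomial degree share at least $m-4$ variables in the $f_i$, so $h_1 - h_2 = p\cdot(h_1' - h_2')$ for a monomial $p$ of degree $m-4$ and terms $h_1', h_2'$ of standard degree $4$; the second phase then gives $h_1' \equiv h_2'$ modulo $L' + \langle \rho_Q'\rangle$, and multiplying by $p$ preserves the congruence. The main obstacle is the single exceptional multidegree $(2+2,\,1+1+1+1)$ at standard degree $4$, and this is exactly what the remark following \cref{thm:quadratic-generator-for-perm} explains combinatorially: in the permanent case, column swaps modulo $L'$ no longer carry the sign change that forced the corresponding determinant difference to equal its own negative, so no linear combination of linear relations can equate the two offending monomial-generator terms. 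Once $\rho_Q'$ is in hand, the rest is bookkeeping.
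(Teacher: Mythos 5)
Your proposal is correct and follows essentially the same route as the paper: transfer the singular-multidegree/Cayley-graph argument via the parity convention, rerun the monomial reduction and the degree-$4$ case analysis with $\rho_1',\dotsc,\rho_5',\rho_S'$, adjoin $\rho_Q'$ at the single exceptional multidegree $(2+2,\ 1+1+1+1)$ where \cref{thm:quadratic-generator-for-perm} shows the linear relations fail, and then extend to higher degrees by the pigeonhole argument of \cref{thm:linear-generate-all-plural}. This matches the paper's proof, which is exactly this sequence of amendments to \cref{thm:all-relations-are-monomial,thm:linear-relations-generate-quadratic,thm:linear-generate-all-plural}.
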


\begin{proposition} \label{thm:dimensions-perm}
The dimension of the vector space of linear relations is
\begin{equation*}
\beta_{2,3} = 4 \binom{n+1}{3} \binom{n+2}{3} = \frac{1}{9} n^2 \left( n+1 \right)^2 \left( n-1 \right) \left( n+2 \right).
\end{equation*}
The dimension of the vector space of quadratic relations modulo the subspace generated by the linear relations is
\begin{equation*}
\beta_{2,4} = 2 \binom{n}{2} \binom{n}{4} = \frac{1}{24} n^2 (n-1)^2 (n-2) (n-3).
\end{equation*}
All other Betti numbers $\beta_{2,j}$ for $j \neq 3,4$ are zero.
\end{proposition}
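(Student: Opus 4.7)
The plan is to handle the three parts of the proposition separately, reusing the case analyses of \cref{sec:plural-multidegree} with the sign modifications described earlier in this section. For $\beta_{2,3}$, I would rerun the proof of \cref{thm:beta23} verbatim: the linear relations $\rho_1', \dotsc, \rho_5', \rho_S'$ from \eqref{E:linear-perm} are supported on exactly the same multidegrees as their determinant analogues and differ only in signs inside generators of the form $X_{i,j}X_{k,l} - X_{i,l}X_{k,j}$, so the multidegree-by-multidegree enumeration in the proof of \cref{thm:linear-relations-list} applies unchanged, producing the same dimensions per multidegree as in \cref{tab:dimensions-of-linear-relation-spaces}, and the sum is identical. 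For the vanishing $\beta_{2,j} = 0$ with $j \notin \{3,4\}$, I would invoke \cref{thm:linear+Q-generate-all-permanent}, which exhibits minimal generators of $F_1'$ only in standard degrees $3$ (from the linear relations) and $4$ (from the quadratic $\rho_Q'$), so $\beta_{2,j}$ vanishes in all other degrees.

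The substantive task is to compute $\beta_{2,4} = 2\binom{n}{2}\binom{n}{4}$. I would first localize the sources of ``new'' quadratic relations, meaning those outside the submodule $L'$ generated by the linear relations. By the permanent version of \cref{thm:linear-generate-all-singular}, every quadratic relation of singular multidegree already lies in $L'$. For plural multidegrees, the case-by-case argument behind \cref{thm:linear-relations-generate-quadratic} transfers to the permanent verbatim \emph{except} at the multidegree $(2+2,\ 1+1+1+1)$, which is precisely the multidegree at which $\rho_Q'$ first arises (and symmetrically at its transpose $(1+1+1+1,\ 2+2)$). Fixing a choice of two rows and four columns, I claim the space of new quadratic relations on this $2\times 4$ submatrix is exactly one-dimensional: the lower bound is \cref{thm:quadratic-generator-for-perm}, and the upper bound follows from the same proof, which shows that every column-swap translate of $\rho_Q'$ is equivalent to $\rho_Q'$ modulo $L'$ while the row transposition sends $\rho_Q'$ to $-\rho_Q'$, collapsing the entire $\bS_2\times\bS_4$-orbit to a one-dimensional subspace of $F_1'/L'$. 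Multiplying by the $\binom{n}{2}\binom{n}{4}$ choices of $2\times 4$ submatrix and then doubling to account for the transposed multidegrees gives the claimed total.

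The main technical obstacle is the upper-bound half of the one-dimensionality claim: one must verify that no additional linear-relation combinations further constrain the orbit of $\rho_Q'$ beyond the column-swap identification extracted from \cref{thm:quadratic-generator-for-perm}. Everything else reduces to bookkeeping.
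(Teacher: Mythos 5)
Your proposal is correct and follows essentially the same route as the paper: $\beta_{2,3}$ by rerunning the determinant computation, the vanishing from \cref{thm:linear+Q-generate-all-permanent}, and $\beta_{2,4}$ by counting multidegrees of partition type $(2+2,\ 1+1+1+1)$ after observing that all $\bS_2\times\bS_4$-translates of $\rho_Q'$ coincide (up to sign) modulo $L'$, then doubling for the transpose. The ``technical obstacle'' you flag is already discharged by the paper's combination of \cref{thm:quadratic-generator-for-perm} (nonvanishing) with the permanent version of the \cref{thm:linear-relations-generate-quadratic} case analysis (which shows the quotient at that multidegree is spanned by the image of $\rho_Q'$ alone).
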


\begin{proof}
As already pointed out, \cref{thm:beta23} holds for $\perm_n$ with only minor modifications; this gives the dimension of the vector space of linear relations.  The additional quadratic relations are generated by the orbit of $\rho_Q'$ under $\bS_n \times \bS_n$ and transposing.
We saw in \cref{thm:quadratic-generator-for-perm} that permutations of the columns of this relation are equivalent modulo $L'$. Also, swapping the rows is the same as multiplying by $-1$ and permuting the columns. Therefore we need only compute the number of multidegrees with the partition form $(2+2, \ 1+1+1+1)$. This number is $\binom{n}{2} \binom{n}{4}$: we must choose $2$ of the $n$ rows to give multidegree $2$, and $4$ of the $n$ columns to give multidegree $1$.
After including a factor of $2$ to account for transposing, we have the dimension
\begin{equation*}
2 \binom{n}{2} \binom{n}{4}. 
\end{equation*}
The statement about the Betti numbers for $j \neq 3,4$ is \cref{thm:linear+Q-generate-all-permanent}.
\end{proof}

\begin{proof}[Proof of \cref{main-theorem}\ref{main-theorem-b}]
The theorem follows from combining \cref{thm:linear+Q-generate-all-permanent} and \cref{thm:dimensions-perm}.
\end{proof}

\section{Higher syzygies} \label{sec:higher-syzygies}

In this section, we compute $\beta_{3,4}$, the dimension of the space of linear second syzygies, for the apolar ideals $\det_n^{\perp}$ and $\perm_n^{\perp}$ (\cref{thm:beta34}) and provide a conjectural description of $\beta_{r,r+1}$, the dimension of the space of linear higher syzygies (\cref{conj:betarr1}) of $\det_n^{\perp}$.  We also record Macaulay2 computations of the graded Betti tables for small $n$ in \cref{sec:betti}.

\subsection{Linear second syzygies}
Using \cref{main-theorem}, we can compute the Betti number $\beta_{3,4}$, the number of linear second syzygies, of $S / \det_n^\perp$ and $S / \perm_n^\perp$.  

\begin{proposition} \label{thm:beta34}
The dimension of the space of linear second syzygies of $S / \det_n^\perp$ is
\begin{equation*}
\beta_{3,4} = \frac{1}{192} (n-1) n^2 (n+1)^2 (n+2) (5n^2 + 5n - 18) = 6 \binom{n+1}{4} \binom{n+3}{4} + 9 \binom{n+2}{4}^2.
\end{equation*}
The dimension of the space of linear second syzygies of $S / \perm_n^\perp$ is
\begin{multline*}
\beta_{3,4} = \frac{1}{192} (n - 1) n^2 (5 n^5 + 25 n^4 + 35 n^3 - 85 n^2 + 8n - 84) \\
= 6 \binom{n+1}{4} \binom{n+3}{4} + 9 \binom{n+2}{4}^2 + 2 \binom{n}{2} \binom{n}{4}.
\end{multline*}
\end{proposition}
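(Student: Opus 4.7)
The plan is to extract $\beta_{3,4}$ from the Hilbert series identity, using the Betti numbers already determined in earlier sections.

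By \cref{thm:dim-S/det}, for $I \in \{\det_n^\perp, \perm_n^\perp\}$ the Hilbert series of $S/I$ is
$$H(t) \;=\; \sum_{d=0}^{n} \binom{n}{d}^2 t^d.$$
The minimal graded free resolution of $S/I$ over $S$ gives the standard identity
$$(1-t)^{n^2} H(t) \;=\; \sum_{i,j} (-1)^i \beta_{i,j}\, t^j,$$
and comparing coefficients of $t^4$ on both sides yields
$$\beta_{0,4} - \beta_{1,4} + \beta_{2,4} - \beta_{3,4} + \beta_{4,4} \;=\; \sum_{k=0}^{4} (-1)^k \binom{n^2}{k}\binom{n}{4-k}^2.$$

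The first step is to show that every Betti number on the left-hand side other than $\beta_{2,4}$ and $\beta_{3,4}$ vanishes: $\beta_{0,4} = 0$ trivially, $\beta_{1,4} = 0$ by \cref{thm:Shafiei}, and $\beta_{4,4} = 0$ because $I$ has no linear generators, so $\beta_{i,i}(S/I) = 0$ for all $i \geq 1$ in any minimal free resolution. (Explicitly, $\mathrm{Tor}^S_4(S/I,\bk)_4$ can be computed via the Koszul complex on the variables of $S$, where the relevant differential $\Lambda^4 S_1 \to S_1 \otimes_\bk \Lambda^3 S_1$ is the injective standard Koszul map.)

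Substituting these vanishings, together with the known values $\beta_{2,4}^{\det} = 0$ from \cref{main-theorem}\ref{main-theorem-a} and $\beta_{2,4}^{\perm} = 2\binom{n}{2}\binom{n}{4}$ from \cref{thm:dimensions-perm}, gives
$$\beta_{3,4} \;=\; \beta_{2,4} \;-\; \sum_{k=0}^{4} (-1)^k \binom{n^2}{k}\binom{n}{4-k}^2$$
in each case.

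The main obstacle is then the combinatorial simplification to the claimed closed forms: the key polynomial identity to verify is
$$-\sum_{k=0}^{4} (-1)^k \binom{n^2}{k}\binom{n}{4-k}^2 \;=\; 6\binom{n+1}{4}\binom{n+3}{4} + 9\binom{n+2}{4}^2.$$
Both sides are polynomials in $n$ of degree $8$, so the identity can be confirmed by checking agreement at any nine values of $n$, or by expanding both sides on a common monomial basis. The two-term decomposition on the right-hand side is suggestive, and a more illuminating refinement would analyze the space of linear second syzygies multidegree by multidegree in the spirit of the proof of \cref{thm:beta23}; the summands $6\binom{n+1}{4}\binom{n+3}{4}$ and $9\binom{n+2}{4}^2$ presumably correspond to distinct classes of partition shapes, and such a structural refinement would dovetail with the representation-theoretic description in \cref{sec:repn-theory}.
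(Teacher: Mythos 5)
Your proposal is correct and is essentially the paper's argument: both extract $\beta_{3,4}$ from the Euler-characteristic identity relating the Betti numbers to the Hilbert function $H_{S/I}(d)=\binom{n}{d}^2$, using the previously computed $\beta_{1,2}$, $\beta_{2,3}$, $\beta_{2,4}$ and the vanishing of the remaining Betti numbers in the relevant range. The only cosmetic difference is that you compare coefficients of $t^4$ in $(1-t)^{n^2}H(t)$ while the paper evaluates $H_{S/I}(4)=\sum_{i,j}(-1)^i\beta_{i,j}H_S(4-j)$ directly; these are the same identity, and both treatments leave the final polynomial simplification as routine algebra.
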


\begin{proof}
Recall that if $M$ is a finitely generated graded $S$-module and $\cdots \to F_1 \to F_0 \to M \to 0$ is a minimal graded free resolution with $F_i = \bigoplus_j S(-j)^{\beta_{i,j}}$, then we have the following formulas for the Hilbert function $H_M$ of $M$ (c.f. \cite{eisenbud-syzygies})
\begin{equation} \label{E:hilbert-formula}
H_{M}(d) = \sum_i (-1)^i H_{F_i}(d) = \sum_{i,j} (-1)^i \beta_{i,j} H_S(d-j)
\end{equation}
in terms of the Hilbert functions $H_{F_i}$ and $H_S$.  Since $S$ is a polynomial ring in $n^2$ variables and using \cref{thm:dim-S/det}, we know that 
$$
H_S(d) = \binom{n^2 + d - 1}{d}
\qquad \text{and} \qquad
H_{S/\det_n^\perp}(d) = H_{S/\perm_n^\perp}(d) = \binom{n}{d}^2.
$$
For the determinant, \eqref{E:hilbert-formula} implies that
\begin{align*}
H_{S / \det_n^\perp}(4) & = \beta_{0,0} H_S(4) - \beta_{1,2} H_S(2) + \beta_{2,3} H_S(1) - \beta_{3,4} H_S(0)
\end{align*}
since all other $\beta_{i,j}$ are either $0$, or have $j > 4$ so $H_S(4-j) = 0$.

We know the value of each $\beta_{i,j}$ here (in particular, $\beta_{0,0} = 1$, and $\beta_{1,2}$ and $\beta_{2,3}$ are computed in \cref{thm:beta12,thm:beta23} respectively).
Solving this equation for $\beta_{3,4}$ gives the formula above.

For the permanent, $\beta_{2,4}$ is non-zero, so \eqref{E:hilbert-formula} implies that
\begin{align*}
H_{S / \perm_n^\perp}(4) & = \beta_{0,0} H_S(4) - \beta_{1,2} H_S(2) + \beta_{2,3} H_S(1) + \beta_{2,4} H_S(0) - \beta_{3,4} H_S(0)
\end{align*}
and solving this gives the formula above for $\beta_{3,4}$.
\end{proof}

The information we know about the Betti numbers of $S / \det_n^\perp$ and $S / \perm_n^\perp$ is summarized in \cref{tab:Betti-tables-det-perm}.

\begin{table}[htbp]
\caption{The known entries of the Betti tables of $S / \det_n^\perp$ and $S / \perm_n^\perp$} \label{tab:Betti-tables-det-perm}
\begin{subtable}{\linewidth}
\caption{Betti table for $S / \det_n^\perp$}
\begin{equation*} \renewcommand{\arraystretch}{1.3}
\begin{array}{r | c c c c c}
& 0 & 1 & 2 & 3 & \hdots \\ \hline
0 & 1 & 0 & 0 & 0 & \hdots \\
1 & 0 & \binom{n+1}{2}^2 & 4 \binom{n+1}{3} \binom{n+2}{3} & 6 \binom{n+1}{4} \binom{n+3}{4} + 9 \binom{n+2}{4}^2 \\
2 & 0 & 0 & 0 \\
\vdots & \vdots & \vdots & \vdots
\end{array}
\end{equation*}
\end{subtable}
\begin{subtable}{\linewidth}
\caption{Betti table for $S / \perm_n^\perp$}
\begin{equation*} \renewcommand{\arraystretch}{1.3}
\begin{array}{r | c c c c c}
& 0 & 1 & 2 & 3 & \hdots \\ \hline
0 & 1 & 0 & 0 & 0 & \hdots \\
1 & 0 & \binom{n+1}{2}^2 & 4 \binom{n+1}{3} \binom{n+2}{3} & 6 \binom{n+1}{4} \binom{n+3}{4} + 9 \binom{n+2}{4}^2 + 2 \binom{n}{2} \binom{n}{4} \\
2 & 0 & 0 & 2 \binom{n}{2} \binom{n}{4} \\
3 & 0 & 0 & 0 \\
\vdots & \vdots & \vdots & \vdots
\end{array}
\end{equation*}
\end{subtable}
\end{table}

The formulae for $\beta_{r,r+1}$ for the determinant seem to follow a pattern. We conjecture that the pattern continues:

\begin{conjecture} \label{conj:betarr1}
The Betti number $\beta_{r,r+1}$ for $S / \det_n^\perp$ is given by
\begin{equation*}
\beta_{r,r+1} = r \sum_{i=1}^r N_{r,i} \binom{n+i}{r+1} \binom{n+r-i+1}{r+1}
\end{equation*}
where $N_{r,i}$ are the Narayana numbers (sequence \href{http://oeis.org/A001263}{A001263} at \cite{OEIS}):
\begin{align*}
N_{r,i} & = \frac{1}{r} \binom{r}{i} \binom{r}{i-1}.
\end{align*}
\end{conjecture}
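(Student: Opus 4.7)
The approach is to upgrade \cref{conj:betarr1} to a representation-theoretic statement and prove that.  The key preliminary observation is that $A := S/\det_n^{\perp}$ is isomorphic as a graded $\bk$-algebra to the diagonal subalgebra of the double exterior algebra $\Lambda^{\bullet} V \otimes \Lambda^{\bullet} W$ (with Koszul sign rule), via $X_{i,j} \mapsto v_i \otimes w_j$: Shafiei's generators from \cref{thm:Shafiei} are precisely the relations valid in this algebra.  Consequently $(S/\det_n^{\perp})_d \cong \Lambda^d V \otimes \Lambda^d W$ as $\GL(V) \times \GL(W)$-representations, and each $\operatorname{Tor}^S_r(A, \bk)_j$ carries a natural $\GL(V) \times \GL(W)$-module structure.

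Guided by the representation-theoretic descriptions of the first few $F_i$ from \cref{sec:repn-theory} and the Narayana numbers appearing in \cref{conj:betarr1}, I would propose the following stronger identification:
\begin{equation*}
\operatorname{Tor}^S_r(S/\det_n^{\perp}, \bk)_{r+1} \;\cong\; \bigoplus_{i=1}^{r} \mathbb{S}_{(i+1,\, 1^{r-i})} V \,\otimes\, \mathbb{S}_{(r+2-i,\, 1^{i-1})} W,
\end{equation*}
where each summand pairs a hook-shape representation of size $r+1$ for $V$ with the ``conjugate'' hook for $W$.  A direct hook content formula computation gives $\dim \mathbb{S}_{(i+1,\, 1^{r-i})} V = \binom{r}{i}\binom{n+i}{r+1}$ and $\dim \mathbb{S}_{(r+2-i,\, 1^{i-1})} W = \binom{r}{i-1}\binom{n+r-i+1}{r+1}$, so summing and using $r N_{r,i} = \binom{r}{i}\binom{r}{i-1}$ recovers the conjectural formula exactly.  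One verifies by inspection that this matches the known cases $r = 1, 2, 3$ computed in \cref{thm:generators-repn,thm:relations-repn,thm:2ndsyzygies-repn}.

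To prove this identification, the natural tool is the Koszul resolution of $\bk$ over $S$: tensoring with $A$ gives a complex $A \otimes_{\bk} \Lambda^{\bullet}(V \otimes W)$ whose $r$th homology is $\operatorname{Tor}^S_r(A, \bk)$.  The linear strand in degree $r+1$ lives inside $A_1 \otimes \Lambda^r(V \otimes W) = (V \otimes W) \otimes \Lambda^r(V \otimes W)$, as the kernel of the Koszul differential into $A_2 \otimes \Lambda^{r-1}(V \otimes W)$ modulo the image of $\Lambda^{r+1}(V \otimes W)$.  By Cauchy's formula $\Lambda^r(V \otimes W) \cong \bigoplus_{\lambda \vdash r} \mathbb{S}_{\lambda} V \otimes \mathbb{S}_{\lambda'} W$ and Pieri's rule, each piece decomposes into Schur summands, reducing the task to tracking which hook components survive.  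The main obstacle is the combinatorial bookkeeping of the Koszul differential across these decompositions; a conceptual alternative is to realize $A$ as a higher pushforward of a $\GL(V) \times \GL(W)$-equivariant bundle from some partial flag variety and apply Bott's theorem via the Kempf--Lascoux--Weyman technique, though constructing the correct geometric model for the double exterior algebra is itself nontrivial.
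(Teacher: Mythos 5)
First, a point of order: the statement you were asked to prove is left as a conjecture in the paper. The authors verify it only for $r \le 3$ (via \cref{thm:beta12,thm:beta23,thm:beta34}) and otherwise support it with the Macaulay2 computations of \cref{sec:betti}, so there is no proof in the paper to compare yours against. Your preliminary observations are sound and in fact reproduce the paper's own heuristics: the identification of $S/\det_n^\perp$ with the diagonal subalgebra of $\Lambda^{\bullet} V \otimes \Lambda^{\bullet} W$ is correct (Shafiei's quadrics vanish in that algebra, and the Hilbert functions $\binom{n}{d}^2$ agree, so the induced surjection is an isomorphism in characteristic $0$); your proposed decomposition of $\operatorname{Tor}^S_r(S/\det_n^\perp,\bk)_{r+1}$ into pairs of hooks is precisely \cref{conj:repn}; and your hook-dimension computation recovering $r N_{r,i} = \binom{r}{i}\binom{r}{i-1}$ is the same calculation as the remark following that conjecture.

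However, the proposal is not a proof: the entire mathematical content is concentrated in the step you defer as ``combinatorial bookkeeping,'' namely showing that the homology of the linear strand of the Koszul complex $A \otimes \Lambda^{\bullet}(V\otimes W)$ in homological degree $r$ and internal degree $r+1$ is exactly $\bigoplus_{i=1}^r V_{(i+1,1^{r-i})} \otimes W_{(r+2-i,1^{i-1})}$. Cauchy's formula and Pieri's rule tell you which irreducibles can occur in $A_1 \otimes \Lambda^r(V \otimes W)$ and in $\Lambda^{r+1}(V\otimes W)$ and with what multiplicities, but they say nothing about the ranks of the equivariant differentials on each isotypic component; these multiplicities are generally greater than one, so even deciding which hook summands survive in the homology requires analyzing actual maps, not just characters. (Indeed, if Schur-positivity of a character computation were enough, the answer would follow formally for all $r$, which it does not.) The alternative route via Kempf--Lascoux--Weyman is likewise only named, not carried out, and you acknowledge that no geometric model is in hand. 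So while the strategy is reasonable and the dimension bookkeeping is correct, no step of the argument establishes the formula for any $r \ge 4$; the statement remains a conjecture.
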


We know this conjecture is true for $r \leq 3$ by \cref{thm:beta12,thm:beta23,thm:beta34}.  The first few Narayana numbers are shown in \cref{tab:Narayana-numbers}.   See \cref{conj:repn} for a refinement of this conjecture.

\begin{table}[htbp]
\caption{Table of Narayana numbers $N_{r,i}$ for small $r,i$. Numbers in italics are known to agree with the formulas and Betti tables; the rest are conjectural.}\label{tab:Narayana-numbers}
\begin{equation*}
\begin{array}{r r | c c c c c c c}
&& \multicolumn{7}{c}{i} \\
&& 1 & 2 & 3 & 4 & 5 & 6 & 7 \\ \hline
\multirow{7}{*}{r} & 1 & \mathit{1} \\
& 2 & \mathit{1} & \mathit{1} \\
& 3 & \mathit{1} & \mathit{3} & \mathit{1} \\
& 4 & \mathit{1} & \mathit{6} & \mathit{6} & \mathit{1} \\
& 5 & 1 & \mathit{10} & \mathit{20} & \mathit{10} & 1 \\
& 6 & 1 & 15 & \mathit{50} & \mathit{50} & 15 & 1 \\
& 7 & 1 & 21 & 105 & \mathit{175} & 105 & 21 & 1
\end{array}
\end{equation*}
\end{table}

\subsection{Betti tables $S / \det_n^\perp$ and $S / \perm_n^\perp$ for low $n$} \label{sec:betti}
Using the Macaulay2 software on our laptops\footnote{Thanks to the help of Scott Morrison, we also tried these computations on a more powerful computer, but unfortunately this didn't allow for the computation of any additional Betti numbers.} we computed the graded Betti tables of $S / \det_n^\perp$ and $S / \perm_n^\perp$.   Some of these tables are incomplete due to computer limitations; where this is the case, every column shown is complete, but there may be more columns that aren't shown.  The computations presented in \crefrange{tab:Betti-2}{tab:Betti-7} support \cref{conj:betarr1}.

\begin{table}[htbp]
\caption{Complete Betti tables for $S / \det_n^\perp$ and $S / \perm_n^\perp$ where $n = 2$.}\label{tab:Betti-2}
\begin{subtable}[b]{.5\linewidth}
\centering
\caption{$S / \det_2^\perp$}\label{tab:Betti-det2}

\begin{tabular}{ r | *{5}{c} }
 & 0 & 1 & 2 & 3 & 4 \\ \hline
0 & 1 \\
1 & & 9 & 16 & 9 \\
2 & & & & & 1
\end{tabular}

\end{subtable}%
\begin{subtable}[b]{.5\linewidth}
\centering
\caption{$S / \perm_2^\perp$}\label{tab:Betti-perm2}

\begin{tabular}{ r | *{5}{c} }
 & 0 & 1 & 2 & 3 & 4 \\ \hline
0 & 1 \\
1 & & 9 & 16 & 9 \\
2 & & & & & 1
\end{tabular}

\end{subtable}%
\end{table}

\begin{table}[htbp]
\caption{Complete Betti tables for $S / \det_n^\perp$ and $S / \perm_n^\perp$ where $n = 3$} \label{tab:Betti-3}
\begin{subtable}[t]{\linewidth}
\centering
\caption{$S / \det_3^\perp$}\label{tab:Betti-det3}
\begin{tabular}{ r | *{10}{c} }
& 0 & 1 & 2 & 3 & 4 & 5 & 6 & 7 & 8 & 9 \\ \hline
0 & 1 \\
1 & & 36 & 160 & 315 & 288 & 100 \\
2 & & & & & 100 & 288 & 315 & 160 & 36 \\
3 & & & & & & & & & & 1
\end{tabular}
\end{subtable} \newline

\begin{subtable}[t]{\linewidth}
\centering
\caption{$S / \perm_3^\perp$}\label{tab:Betti-perm3}
\begin{tabular}{ r | *{10}{c} }
& 0 & 1 & 2 & 3 & 4 & 5 & 6 & 7 & 8 & 9 \\ \hline
0 & 1 \\
1 & & 36 & 160 & 315 & 288 & 116 \\
2 & & & & & 116 & 288 & 315 & 160 & 36 \\
3 & & & & & & & & & & 1
\end{tabular}
\end{subtable}
\end{table}

\setlength{\rotFPtop}{\fill}
\setlength{\rotFPbot}{\fill}

\begin{sidewaystable}
\caption{Complete Betti tables for $S / \det_n^\perp$ and $S / \perm_n^\perp$ where $n = 4$} \label{tab:Betti-4}
\begin{subtable}[t]{\linewidth}
\centering
\caption{$S / \det_4^\perp$}\label{tab:Betti-det4}
{\setlength{\tabcolsep}{4pt} \begin{tabular}{ r | *{18}{c} l }
& 0 & 1 & 2 & 3 & 4 & 5 & 6 & 7 & 8 & 9 & 10 & 11 & 12 & 13 & 14 & 15 & 16 \\ \hline
0 & 1 \\
1 & & 100 & 800 & 3075 & 6496 & 7700 & 4800 & 1225 \\
2 & & & & & 2500 & 16800 & 51275 & 93600 & 113256 & 93600 & 51275 & 16800 & 2500 \\
3 &&&&&&&&&& 1225 & 4800 & 7700 & 6496 & 3075 & 800 & 100 \\
4 &&&&&&&&&&&&&&&&& 1
\end{tabular}}
\end{subtable}
\newline

\begin{subtable}[t]{\linewidth}
\centering
\caption{$S / \perm_4^\perp$}\label{tab:Betti-perm4}
{\setlength{\tabcolsep}{4pt} \begin{tabular}{ r | *{18}{c} l }
& 0 & 1 & 2 & 3 & 4 & 5 & 6 & 7 & 8 & 9 & 10 & 11 & 12 & 13 & 14 & 15 & 16 \\ \hline
0 & 1 \\
1 & & 100 & 800 & 3087 & 6688 & 8400 & 4320 & 794 \\
2 &&& 12 & 192 & 3200 & 16320 & 50844 & 93600 & 113256 & 93600 & 50844 & 16320 & 3200 & 192 & 12 \\
3 &&&&&&&&&& 794 & 4320 & 8400 & 6688 & 3087 & 800 & 100 \\
4 &&&&&&&&&&&&&&&&& 1
\end{tabular}}
\end{subtable}
\end{sidewaystable}

\clearpage

\begin{table}[htbp]
\caption{Partial Betti tables for $S / \det_n^\perp$ and $S / \perm_n^\perp$ where $n = 5$} \label{tab:Betti-5}
\begin{subtable}[t]{0.5\linewidth}
\centering
\caption{$S / \det_5)^\perp$}\label{tab:Betti-det5}
\begin{tabular}{ r | *{4}{c} l }
& 0 & 1 & 2 & 3 & $\dotsm$ \\ \hline
0 & 1 \\
1 & & 225 & 2800 & 17325 \\
$\vdots$
\end{tabular}
\end{subtable}%
\begin{subtable}[t]{0.5\linewidth}
\centering
\caption{$S / \perm_5)^\perp$}\label{tab:Betti-perm5}
\begin{tabular}{ r | *{4}{c} l }
& 0 & 1 & 2 & 3 & $\dotsm$ \\ \hline
0 & 1 \\
1 & & 225 & 2800 & 17425 \\
2 &&& 100 & 2400 \\
$\vdots$
\end{tabular}
\end{subtable}
\end{table}

\begin{table}[htbp]
\caption{Partial Betti tables for $S / \det_n^\perp$ and $S / \perm_n^\perp$ where $n = 6$} \label{tab:Betti-6}
\begin{subtable}[t]{0.5\linewidth}
\centering
\caption{$S / \det_6^\perp$}\label{tab:Betti-det6}
\begin{tabular}{ r | *{3}{c} l }
& 0 & 1 & 2 & $\dotsm$ \\ \hline
0 & 1 \\
1 & & 441 & 7840 \\
$\vdots$
\end{tabular}
\end{subtable}%
\begin{subtable}[t]{0.5\linewidth}
\centering
\caption{$S / \perm_6^\perp$}\label{tab:Betti-perm6}
\begin{tabular}{ r | *{3}{c} l }
& 0 & 1 & 2 & $\dotsm$ \\ \hline
0 & 1 \\
1 & & 441 & 7840 \\
2 &&& 450 \\
$\vdots$
\end{tabular}
\end{subtable}
\end{table}

\begin{table}[htbp]
\caption{Partial Betti tables for $S / \det_n^\perp$ and $S / \perm_n^\perp$ where $n = 7$} \label{tab:Betti-7}
\begin{subtable}[t]{0.5\linewidth}
\centering
\caption{$S / \det_7^\perp$}\label{tab:Betti-det7}
\begin{tabular}{ r | *{3}{c} l }
& 0 & 1 & 2 & $\dotsm$ \\ \hline
0 & 1 \\
1 & & 784 & 18816 \\
$\vdots$
\end{tabular}
\end{subtable}%
\begin{subtable}[t]{0.5\linewidth}
\centering
\caption{$S / \perm_7^\perp$}\label{tab:Betti-perm7}
\begin{tabular}{ r | *{3}{c} l }
& 0 & 1 & 2 & $\dotsm$ \\ \hline
0 & 1 \\
1 & & 784 & 18816 \\
2 &&& 1470 \\
$\vdots$
\end{tabular}
\end{subtable}
\end{table}

\section{Representation-theoretic description of syzygy modules} \label{sec:repn-theory}
In this section, we will assume that $\chartext(\bk) = 0$.  Denote $V \cong W \cong \bk^n$.  Recall from \cref{sec:symmetries} that the symmetry group of $\det_n$ is  $G_{\det_n} = (\GL(V) \times \GL(W))/\bk^* \rtimes \ZZ/2$ where the non-identity element of $\ZZ/2$ corresponds to the \emph{transpose element} of $G_{\det_n}$ (i.e., $M_n(\bk) \to M_n(\bk), A \mapsto A^\top$). The minimal free resolution \eqref{E:free-det} of $S/\det_n^{\perp}$ is naturally a resolution of $G_{\det}$-representations.  The main results of this section are \cref{thm:generators-repn,thm:relations-repn,thm:2ndsyzygies-repn}, which provide a representation-theoretic characterization of the space of generators, the space of relations, and the space of linear second syzygies.  Moreover, we provide a conjectural representation-theoretic description of the space of linear higher syzygies of the determinant.

\subsection{Representation theory}
 A representation of $G_{\det_n}$ is the data of a $\GL(V) \times \GL(W)$-representation $Q$ such that the diagonal $\bk^* \subset \GL(V) \times \GL(W)$ (consisting of pairs $(\alpha I_n, \alpha^{-1} I_n)$ for $\alpha \in \bk^*$) acts trivially together with an involution $\iota \co Q \to Q$ (corresponding to transpose element of $G_{\det_n}$) 
such that $(g_1,g_2) \cdot \iota(q) = \iota( (g_2, g_1) \cdot q)$ for $(g_1,g_2) \in \GL(V) \times \GL(W)$ and $q \in Q$.
By standard representation theory, any irreducible representation of $\GL(V) \times \GL(W)$ is $V_{\lambda} \tensor W_{\eta}$ where $\lambda: \lambda_1 \ge \cdots \ge \lambda_n$ and $\eta \co \eta_1 \ge \cdots \ge \eta_n$ are the \emph{highest weights}.  We denote by $|\lambda| = \lambda_1 + \cdots + \lambda_n$ the \emph{degree} of a weight $\lambda$.
Thus an irreducible representation $G_{\det_n}$ either has the form $V_{\lambda} \tensor W_{\lambda}$ for a highest weight $\lambda$ or $(V_{\lambda} \tensor W_{\eta}) \oplus (V_{\eta} \tensor W_{\lambda})$ for a pair of distinct highest weights $\lambda, \eta$ with $|\lambda| = |\eta|$.

\subsection{The space of generators}
By \cref{thm:Shafiei}, we know that $\det_n^{\perp}$ is generated by $X_{i,i}^2$, $X_{i,j}X_{i,k}$, $X_{i,j}X_{k,j}$, and $X_{i,j} X_{k,l} + X_{i,l} X_{k,j}$.  This decomposes the degree $2$ component $F_1$ as a direct sum of one-dimension representations of $T_V \times T_W$, where $T_V$ denote the maximal torus of $\GL(V)$.  By examining the weights, we see that $X_{1,1}^2$ is the highest weight vector with weight $\big((2), (2)\big)$.    Note that $V_{(2)} \tensor W_{(2)} \cong \Sym^2 V \tensor \Sym^2 W$ is an irreducible $G_{\det_n}$-representation where the transpose automorphism acts on this representation by interchanging $\Sym^2 V$ and $\Sym^2 W$.
But since $\dim(\Sym^2 V \tensor \Sym^2 W) = {n+1 \choose 2}^2$ is equal to $\beta_{1,2}$ (i.e., the number of minimal quadratic generators of $\det_n^{\perp}$), we conclude that:
\begin{proposition} \label{thm:generators-repn}
There is an isomorphism of $G_{\det_n}$-representations
\begin{equation*}
F_1 \cong (\Sym^2 V \tensor \Sym^2 W) \tensor S
\end{equation*}
In particular, the degree $2$ component of $F_1$ is isomorphic to the irreducible $G_{\det_n}$-representation $\Sym^2 V \tensor \Sym^2 W$.
\end{proposition}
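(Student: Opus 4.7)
The plan is to leverage Shafiei's minimal generator count \cref{thm:Shafiei} together with the classical plethystic decomposition of $\Sym^2(V \tensor W)$, and then extend from degree $2$ to all of $F_1$ using the freeness of the module. First, I would observe that $G_{\det_n}$ stabilizes $\det_n$ up to scalar, and therefore preserves $\det_n^{\perp}$ and acts on $S/\det_n^{\perp}$ by ring automorphisms. Since the minimal graded free resolution \eqref{E:free-det} is unique up to isomorphism, it can be chosen $G_{\det_n}$-equivariantly, so the degree-$2$ piece of $F_1$ is identified equivariantly with the space $(\det_n^{\perp})_2$ of minimal quadratic generators sitting inside $S_2$.

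The main step is to identify $(\det_n^{\perp})_2$ with $\Sym^2 V \tensor \Sym^2 W$ as a $\GL(V) \times \GL(W)$-representation. Using the classical Cauchy-type decomposition
\[
S_2 = \Sym^2(V \tensor W) \cong (\Sym^2 V \tensor \Sym^2 W) \oplus (\Lambda^2 V \tensor \Lambda^2 W),
\]
we see that $S_2$ splits into two non-isomorphic irreducibles of dimensions $\binom{n+1}{2}^2$ and $\binom{n}{2}^2$. By \cref{thm:dim-S/det}, the quotient $(S/\det_n^{\perp})_2$ has dimension $\binom{n}{2}^2$, and is spanned by the $2 \times 2$ minors $X_{i,j}X_{k,l} - X_{i,l}X_{k,j}$ (for $i < k$, $j < l$), which form a natural basis for the skew-symmetric summand $\Lambda^2 V \tensor \Lambda^2 W$. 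Therefore $(\det_n^{\perp})_2$ must coincide with the complementary summand $\Sym^2 V \tensor \Sym^2 W$; this is consistent with Shafiei's generators, which are symmetric in both row and column indices, and with the observation that $X_{1,1}^2$ is a highest weight vector of weight $\big((2),(2)\big)$.

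Finally, I would verify compatibility with the $\ZZ/2$ factor of $G_{\det_n}$: the transpose element acts on $V \tensor W$ by swapping the two tensor factors (under the identification $V \cong W$), which induces the swap of $\Sym^2 V$ with $\Sym^2 W$ on the isotypic component, and this matches how transposition acts on the Shafiei generators. Since \cref{thm:Shafiei} gives $\beta_{1,j} = 0$ for $j \neq 2$, the free $S$-module $F_1$ is generated entirely in degree $2$, whence $F_1 \cong (\Sym^2 V \tensor \Sym^2 W) \tensor_{\bk} S$ as graded $G_{\det_n}$-equivariant $S$-modules. There is no genuine obstacle here: the only mild subtlety is ensuring the equivariance of the resolution, which is automatic from its uniqueness up to isomorphism; once Shafiei's explicit generating set is in hand, the remainder is a short exercise in classical invariant theory.
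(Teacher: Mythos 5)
Your proof is correct, and it reaches the identification of the degree-$2$ piece by a genuinely different (if closely related) route. The paper exhibits $X_{1,1}^2$ as a highest weight vector of weight $\big((2),(2)\big)$ inside the span of Shafiei's generators, concludes that this span contains the irreducible $V_{(2)}\tensor W_{(2)}\cong \Sym^2 V\tensor \Sym^2 W$, and then matches dimensions against $\beta_{1,2}=\binom{n+1}{2}^2$. You instead decompose the ambient space $S_2=\Sym^2(V\tensor W)\cong(\Sym^2 V\tensor\Sym^2 W)\oplus(\Lambda^2 V\tensor\Lambda^2 W)$ by the Cauchy formula and identify the quotient $(S/\det_n^\perp)_2$ with the second summand via the $2\times 2$ minors, so that $(\det_n^\perp)_2$ is forced to be the complementary summand; since the decomposition is multiplicity-free into non-isomorphic irreducibles, the dimension count from \cref{thm:dim-S/det} alone pins this down, and you do not actually need Shafiei's explicit list of generators for the degree-$2$ identification (only the vanishing $\beta_{1,j}=0$ for $j\neq 2$ to propagate to all of $F_1$). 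Your version also makes explicit two points the paper leaves implicit --- that the minimal free resolution can be chosen $G_{\det_n}$-equivariantly, and how the transpose involution acts --- both of which are handled correctly. The only caveats are cosmetic: the images, rather than the minors themselves, span the quotient, and the splitting of $\Sym^2(V\tensor W)$ requires $\characteristic(\bk)\neq 2$, which is covered by the standing characteristic-zero hypothesis of \cref{sec:repn-theory}.
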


\subsection{The space of relations}
The linear relations have standard degree $3$. The highest weight possible is $\big((3), (3)\big)$; however, no non-zero relations have this weight, since there is only one element of $F_1$ with this multidegree, namely $X_{1,1} (X_{1,1}^2)$. The next highest weight possible is $\big((3), (2,1)\big)$, and this weight space is inhabited by $\rho_1 = X_{1,2} (X_{1,1}^2) - X_{1,1} (X_{1,1} X_{1,2})$. Note that $\big((2,1), (3)\big)$ is inhabited by the transpose of this.  We see that as $\GL(V) \times \GL(W)$-representations, the degree $3$ component of $F_2$ contains $(V_{(3)} \tensor W_{(2,1)}) \oplus (V_{(2,1)} \tensor W_{(3)})$.  Since $\dim V_{(3)} = {n+2 \choose 3}$ and $\dim V_{(2,1)} = 2 {n+1 \choose 3}$, we compute that
\begin{equation*}
\dim (V_{(3)} \tensor W_{(2,1)}) \oplus (V_{(2,1)} \tensor W_{(3)}) = 4 {n+2 \choose 3} {n+1 \choose 3}
\end{equation*}
which is equal to $\beta_{2,3}$.  We conclude that:

\begin{proposition} \label{thm:relations-repn}
There is an isomorphism of $G_{\det_n}$-representations
\begin{equation*}
F_2 \cong \big((V_{(3)} \tensor W_{(2,1)}) \oplus (V_{(2,1)} \tensor W_{(3)})\big) \tensor S
\end{equation*}
In particular, the degree $3$ component of $F_2$ is isomorphic to the irreducible $G_{\det_n}$-representation $(V_{(3)} \tensor W_{(2,1)}) \oplus (V_{(2,1)} \tensor W_{(3)})$.
\end{proposition}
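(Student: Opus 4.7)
The plan is to show that the inclusion of $\GL(V)\times\GL(W)$-representations sketched just before the statement is actually an equality on the degree $3$ component of $F_2$, and then to upgrade this to the stated free-module decomposition using \cref{main-theorem}\ref{main-theorem-a}.

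First I would verify rigorously that $\rho_1 = X_{1,2}(X_{1,1}^2) - X_{1,1}(X_{1,1}X_{1,2})$ is a highest weight vector of weight $\big((3),(2,1)\big)$ under the standard upper-triangular Borel of $\GL(V)\times\GL(W)$. Acting on $V\otimes W$, the positive root vectors $E^V_{k\ell}$ with $k<\ell$ send $X_{i,j}$ to $\delta_{\ell,i}\,X_{k,j}$, while $E^W_{k\ell}$ with $k<\ell$ send $X_{i,j}$ to $\delta_{\ell,j}\,X_{i,k}$. Every variable in $\rho_1$ has $V$-index $1$, so every positive $V$-root vector annihilates $\rho_1$; and the only positive $W$-root vector that can act non-trivially is $E^W_{1,2}$. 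A Leibniz-rule computation, applied both to the $S$-coefficient and to the quadratic generator of $F_1$ that it multiplies, yields $E^W_{1,2}\cdot\rho_1 = X_{1,1}(X_{1,1}^2) - X_{1,1}(X_{1,1}^2) = 0$.

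Consequently the $\GL(V)\times\GL(W)$-submodule of $(F_2)_3$ generated by $\rho_1$ is isomorphic to the Weyl module $V_{(3)}\otimes W_{(2,1)}$. Applying the transpose involution in $G_{\det_n}$ produces a second submodule isomorphic to $V_{(2,1)}\otimes W_{(3)}$, and since these two irreducibles have distinct highest weights they meet trivially. We thus obtain a $G_{\det_n}$-equivariant injection
\[
(V_{(3)}\otimes W_{(2,1)}) \oplus (V_{(2,1)}\otimes W_{(3)}) \hookrightarrow (F_2)_3,
\]
with the transpose automorphism interchanging the two summands. Since the left-hand side has dimension $4\binom{n+1}{3}\binom{n+2}{3}$, which equals $\beta_{2,3}$ by \cref{thm:beta23}, this injection must be an isomorphism.

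Finally, by \cref{main-theorem}\ref{main-theorem-a} the module $F_2$ is free with all generators in standard degree $3$, so $F_2 \cong (F_2)_3 \otimes_\bk S$ as a graded $G_{\det_n}$-equivariant $S$-module, and combining with the previous step gives the stated isomorphism. The one real obstacle is the highest-weight verification for $\rho_1$: it is essentially routine bookkeeping, but it demands care because the root vector acts simultaneously on the $S$-coefficient and on the quadratic generator it multiplies, and the two Leibniz contributions must be tracked and seen to cancel.
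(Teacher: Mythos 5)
Your proposal is correct and follows essentially the same route as the paper: exhibit $\rho_1$ as a highest weight vector of weight $\big((3),(2,1)\big)$, add its transpose, and match the dimension $4\binom{n+1}{3}\binom{n+2}{3}$ against $\beta_{2,3}$ before tensoring with $S$ using freeness of $F_2$ in degree $3$. The only (harmless) difference is that you verify the highest-weight property by an explicit root-vector/Leibniz computation, whereas the paper deduces it from the vanishing of the relation space in the higher weight $\big((3),(3)\big)$.
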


\subsection{The space of linear 2nd syzygies}
The linear second syzygies have standard degree $4$.  It's not hard to check that there are no second syzygies of weight
$\big((4), (4)\big)$, $\big((4), (3,1)\big)$ and $\big((4), (2,2)\big)$.  The second syzygy 
\begin{equation*}
X_{1,3} \left( \begin{relmatrix}{1}{3} \relgen{1}{1}{1}{1} \relcoeff{1}{2} \end{relmatrix} - \begin{relmatrix}{1}{3} \relgen{1}{1}{1}{2} \relcoeff{1}{1} \end{relmatrix} \right) - X_{1,2} \left( \begin{relmatrix}{1}{3} \relgen{1}{1}{1}{1} \relcoeff{1}{3} \end{relmatrix} - \begin{relmatrix}{1}{3} \relgen{1}{1}{1}{3} \relcoeff{1}{1} \end{relmatrix} \right) + X_{1,1} \left( \begin{relmatrix}{1}{3} \relgen{1}{1}{1}{2} \relcoeff{1}{3} \end{relmatrix} - \begin{relmatrix}{1}{3} \relgen{1}{1}{1}{3} \relcoeff{1}{2} \end{relmatrix} \right)
\end{equation*}
has weight $\big((4), (2,1,1)\big)$ and its transpose has weight $\big((2,1,1), (4)\big)$.  This generates an irreducible $G_{\det_n}$-representation $\big( (V_{(2,1,1)} \tensor W_{(4)}) \oplus (V_{(4)} \tensor W_{(2,1,1)}) \big)$ contained in the  space of linear second syzygies (i.e., the degree $4$ component of $F_3$).  
The next highest weight appearing in the space of linear second syzygies but not in this subrepresentation is $\big((3,1), (3,1)\big)$, which is inhabited by
\begin{equation*}
X_{2,1} \left( \begin{relmatrix}{2}{2} \relgen{1}{1}{1}{1} \relcoeff{1}{2} \end{relmatrix} - \begin{relmatrix}{2}{2} \relgen{1}{1}{1}{2} \relcoeff{1}{1} \end{relmatrix} \right) - X_{1,2} \left( \begin{relmatrix}{2}{2} \relgen{1}{1}{1}{1} \relcoeff{2}{1} \end{relmatrix} - \begin{relmatrix}{2}{2} \relgen{1}{1}{2}{1} \relcoeff{1}{1} \end{relmatrix} \right) + X_{1,1} \left( \begin{relmatrix}{2}{2} \relgen{1}{1}{1}{2} \relcoeff{2}{1} \end{relmatrix} - \begin{relmatrix}{2}{2} \relgen{1}{1}{2}{1} \relcoeff{1}{2} \end{relmatrix} \right). \qedhere
\end{equation*}
We can therefore conclude:

\begin{proposition} \label{thm:2ndsyzygies-repn}
The degree $4$ component of $F_3$ is isomorphic as $G_{\det_n}$-representations to 
\begin{equation*}
\big( (V_{(2,1,1)} \tensor W_{(4)}) \oplus (V_{(4)} \tensor W_{(2,1,1)}) \big) \oplus  (V_{(3,1)} \tensor W_{(3,1)}),
\end{equation*}
the direct sum of two irreducible $G_{\det_n}$-representations.
\end{proposition}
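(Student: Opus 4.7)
The plan is to identify the highest weight vectors occurring in the degree $4$ component of $F_3$, exhibit explicit irreducible subrepresentations they generate, and then match total dimensions against $\beta_{3,4}$ computed in \cref{thm:beta34} to conclude that no other summands can appear.

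First I would enumerate the possible multidegrees of linear second syzygies and rule out the highest potential weights. A second syzygy of standard degree $4$ lives in the span of formal $S$-linear combinations of the linear relations from \cref{thm:linear-relations-list}, and the multidegrees are restricted to pairs of partitions of $4$. A short inspection (paralleling the case analysis in \cref{thm:linear-relations-list}) shows that the multidegrees $\big((4),(4)\big)$, $\big((4),(3,1)\big)$, and $\big((4),(2,2)\big)$ support no non-zero element of $\ker d_3$, because too few linear relations of those multidegrees exist. In particular, the first highest weight in which a non-zero second syzygy can occur is $\big((4),(2,1,1)\big)$.

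Next I would use the two explicit elements displayed immediately before the proposition statement. The first, of weight $\big((4),(2,1,1)\big)$, together with its transpose of weight $\big((2,1,1),(4)\big)$, spans the highest weight line of the $\GL(V)\times\GL(W)$-subrepresentation $\bigl(V_{(4)}\tensor W_{(2,1,1)}\bigr)\oplus\bigl(V_{(2,1,1)}\tensor W_{(4)}\bigr)$, which is irreducible as a $G_{\det_n}$-representation because the transpose element interchanges the two summands. The second, of weight $\big((3,1),(3,1)\big)$, generates the irreducible $G_{\det_n}$-representation $V_{(3,1)}\tensor W_{(3,1)}$; here the transpose element stabilizes the representation and the diagonal $\bk^*$ acts trivially because the two weights have the same degree. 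One must verify that both elements are genuine linear second syzygies: that they map to zero under $d_3$ (an explicit check on both) and that they are linearly independent modulo the image of $d_4$ (automatic since they have standard degree $4$ and the resolution is minimal, so any quadratic relation lifting them would come from degree $5$).

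Finally, I would compute the dimensions of these irreducible pieces using the hook-content formula. This gives $\dim V_{(2,1,1)}=3\binom{n+1}{4}$, $\dim V_{(4)}=\binom{n+3}{4}$, and $\dim V_{(3,1)}=3\binom{n+2}{4}$, so that
\begin{equation*}
\dim\Bigl(\bigl(V_{(4)}\tensor W_{(2,1,1)}\bigr)\oplus\bigl(V_{(2,1,1)}\tensor W_{(4)}\bigr)\oplus\bigl(V_{(3,1)}\tensor W_{(3,1)}\bigr)\Bigr)=6\binom{n+1}{4}\binom{n+3}{4}+9\binom{n+2}{4}^2.
\end{equation*}
By \cref{thm:beta34}, this is exactly $\beta_{3,4}$. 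Since these irreducible representations account for the full dimension of the degree $4$ component of $F_3$, no further summands can appear, completing the proof.

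The main obstacle will be Step 1: showing that no second syzygy exists of weight higher than $\big((4),(2,1,1)\big)$ or $\big((3,1),(3,1)\big)$. While the multidegree restrictions make this a finite combinatorial check, each case requires carefully assembling the spaces of linear relations of a given multidegree from \cref{thm:linear-relations-list} and solving the resulting linear system for syzygies; the dimension match at the end makes this rigorous, but without it one would need to laboriously rule out, for instance, a possible $V_{(2,2)}\tensor W_{(2,2)}$ contribution by hand.
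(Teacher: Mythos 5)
Your proposal is correct and follows essentially the same route as the paper: rule out the weights above $\big((4),(2,1,1)\big)$ and $\big((3,1),(3,1)\big)$, exhibit the two explicit highest weight syzygies and the irreducible $G_{\det_n}$-representations they generate, and close the argument by matching the total dimension $6\binom{n+1}{4}\binom{n+3}{4}+9\binom{n+2}{4}^2$ against $\beta_{3,4}$ from \cref{thm:beta34}. Your explicit remark that the dimension count is what rules out further summands such as $V_{(2,2)}\tensor W_{(2,2)}$ makes the logic slightly more transparent than the paper's, but it is the same proof.
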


\subsection{Conjectural description}

Observing the pattern of \cref{thm:generators-repn,thm:relations-repn,thm:2ndsyzygies-repn}, we can conjecture:

\begin{conjecture} \label{conj:repn}
The degree $r+1$ component of $F_r$ is isomorphic as a $G_{\det_n}$-representation to
\begin{equation} \label{E:conj-repn}
\bigoplus_{i=1}^r V_{(r-i+2, 1^{i-1})} \otimes V_{(i+1, 1^{r-i})},
\end{equation}
where $1^a$ means $a$ copies of $1$. This is the direct sum of $\lfloor \frac{r+1}{2} \rfloor$ irreducible $G_{\det_n}$-representations.
\end{conjecture}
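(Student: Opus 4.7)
The strategy is to compute the linear strand $F_r^{(r+1)} \cong \operatorname{Tor}^S_r(S/\det_n^{\perp}, \bk)_{r+1}$ directly as a $\GL(V) \times \GL(W)$-representation using the Koszul resolution of $\bk$ over $S$, and then deduce the $G_{\det_n}$-structure from the transpose involution.

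The first step is to identify $A := S/\det_n^{\perp}$ as an equivariant algebra. By \cref{thm:Shafiei}, the quadratic generators of $\det_n^{\perp}$ span precisely the subrepresentation $\Sym^2 V \otimes \Sym^2 W$ of $\Sym^2(V \otimes W) = (\Sym^2 V \otimes \Sym^2 W) \oplus (\wedge^2 V \otimes \wedge^2 W)$; since $A$ is quadratically generated and the dimensions match by \cref{thm:dim-S/det}, one concludes $A_d \cong \wedge^d V \otimes \wedge^d W$ for all $d$, with multiplication induced by the exterior product on each factor. Next, $\operatorname{Tor}^S_r(A, \bk)_{r+1}$ is the middle homology of
\begin{equation*}
\wedge^{r+1}(V \otimes W) \xrightarrow{d_L} \wedge^r(V \otimes W) \otimes (V \otimes W) \xrightarrow{d_R} \wedge^{r-1}(V \otimes W) \otimes \wedge^2 V \otimes \wedge^2 W.
\end{equation*}
Applying the Cauchy decomposition $\wedge^k(V \otimes W) \cong \bigoplus_{\lambda \vdash k} V_\lambda \otimes W_{\lambda^T}$ combined with Pieri's rule decomposes each term into $\GL(V) \times \GL(W)$-isotypic pieces $V_\mu \otimes W_\nu$ indexed by pairs with $|\mu| = |\nu| = r+1$.

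For the hook pair $\mu = (r-i+2, 1^{i-1})$ and $\nu = (i+1, 1^{r-i})$ (where $1 \le i \le r$), a short combinatorial check shows: the multiplicity of $V_\mu \otimes W_\nu$ in $\wedge^{r+1}(V \otimes W)$ is zero because $\mu^T = (i, 1^{r-i+1}) \neq \nu$; the multiplicity in the middle term is exactly one, contributed by the unique $\lambda = (r-i+1, 1^{i-1})$ for which both $\mu/\lambda$ and $\nu/\lambda^T = (i, 1^{r-i})$ are single boxes; and the multiplicity on the right is zero, since the only removable vertical $2$-strip from the hook $\mu$ produces a partition whose transpose differs from $\nu$ only by a \emph{horizontal} $2$-strip. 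Hence $d_L$ and $d_R$ both vanish on this isotype, which therefore contributes multiplicity one to the homology, and summing over $i = 1, \dotsc, r$ accounts for the claimed hook pairs.

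The main obstacle is to show that no other pair $(\mu, \nu)$ with $|\mu| = |\nu| = r+1$ contributes. A typical non-hook isotype has nonzero multiplicities in all three terms, and ruling it out requires computing the ranks of $d_L$ and $d_R$ on multiplicity spaces. One line of attack is to first establish \cref{conj:betarr1} for $\beta_{r,r+1}$ by an independent means; combined with the hook-content formula for $\dim V_\lambda$, a dimension count would then force the homology to vanish on all non-hook isotypes. A more direct approach is to write out $d_L$ and $d_R$ explicitly via Pieri morphisms and establish a combinatorial identity showing $\operatorname{rk}(d_L) + \operatorname{rk}(d_R)$ always equals the middle multiplicity for non-hooks; this appears to require substantial new combinatorial input (perhaps modelled on Lascoux-type resolutions of determinantal ideals). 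Once the $\GL(V) \times \GL(W)$-decomposition is in hand, the $G_{\det_n}$-structure is immediate: the transpose involution swaps the $i$-th and $(r-i+1)$-th summands, giving the claimed $\lfloor (r+1)/2 \rfloor$ irreducible $G_{\det_n}$-summands, with the middle summand fixed precisely when $r$ is odd.
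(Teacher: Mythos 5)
This statement is a \emph{conjecture} in the paper: the authors prove it only for $r \le 3$ (by exhibiting highest weight vectors in \cref{thm:generators-repn,thm:relations-repn,thm:2ndsyzygies-repn}) and otherwise only check that the dimension of the conjectured representation matches the conjectured Betti number in \cref{conj:betarr1}. So there is no proof in the paper to compare against, and your submission, as you yourself acknowledge, is a proof \emph{plan} rather than a proof. The parts you do carry out are sound and go further than the paper does: the identification $A_d \cong \wedge^d V \otimes \wedge^d W$ (the span of the $d \times d$ minors), the reduction of the linear strand to the three-term Koszul complex computing $\operatorname{Tor}^S_r(A,\bk)_{r+1}$, and the Cauchy--Pieri bookkeeping showing that each hook pair $\bigl(\mu,\nu\bigr) = \bigl((r-i+2,1^{i-1}),(i+1,1^{r-i})\bigr)$ has multiplicity $(0,1,0)$ across the three terms and hence contributes exactly one copy to the homology. (One small imprecision: a hook $(a,1^b)$ with $a\ge 2$, $b\ge 2$ admits \emph{two} removable vertical $2$-strips, not one; but neither yields a $\lambda$ with $\nu/\lambda^{T}$ a vertical strip, so your conclusion that the right-hand multiplicity vanishes still holds.) The description of how the transpose involution pairs the summands, giving $\lfloor (r+1)/2 \rfloor$ irreducible $G_{\det_n}$-representations, is also correct.

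The genuine gap is exactly where you flag it: showing that every non-hook isotype $V_\mu \otimes W_\nu$ has vanishing homology, i.e.\ that $\operatorname{rk}(d_L) + \operatorname{rk}(d_R)$ equals the middle multiplicity for all such $(\mu,\nu)$. This is the entire content of the conjecture beyond the small cases, and neither of your two proposed routes closes it: the first assumes \cref{conj:betarr1}, which is itself unproven in the paper (and is in fact implied by, not independent of, the representation-theoretic statement), so it would be circular as a path to a full proof; the second ("write out the Pieri morphisms and prove a rank identity") is precisely the hard combinatorial problem, for which you supply no argument. Until one of these is carried out, the submission establishes only that the conjectured hook representations \emph{occur} in the linear strand with multiplicity at least one --- a useful partial result, but not the statement.
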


\begin{remark}
The dimension of \eqref{E:conj-repn} agrees with the dimension in \cref{conj:betarr1}. Indeed, the dimension of the irreducible representation corresponding to a Young diagram $\lambda$ can be given by the hook length formula:
\begin{equation*}
\dim V_\lambda = \prod_{(i,j) \in \lambda} \frac{i - j + n}{\hook(\lambda_{i,j})}
\end{equation*}
where the product is over cells $(i,j)$ in the Young diagram. For a Young diagram shaped like a $\Gamma$ with $c$ cells, of which $d$ are in the top row, all cells are either the top left cell, or a cell in the rightward arm, or a cell in the downward column, so this formula simplifies to: 
\begin{align*}
\dim V_{(d, 1^{c-d})} & = \left( \frac{n}{c} \right) \cdot \left( \frac{(n+1)}{(d-1)} \dotsm \frac{(n+d-1)}{1} \right) \cdot \left( \frac{(n-1)}{(c-d)} \dotsm \frac{(n-c+d)}{1} \right) \\
& = \frac{(n+d-1)! (n-1)!}{c (n-1)! (n-c+d-1)! (d-1)! (c-d)!} \\
& = \binom{c-1}{d-1} \binom{n+d-1}{c}
\end{align*}

Summing this expression over the direct sum in \cref{conj:repn} gives
\begin{align*}
\dim (F_r)_{r+1} & = \sum_{i=1}^r \binom{r}{r-i+1} \binom{n+r-i+1}{r+1} \cdot \binom{r}{i} \binom{n+i}{r+1} \\
& = r \sum_{i=1}^r \frac{1}{r} \binom{r}{i} \binom{r}{i-1} \binom{n+i}{r+1} \binom{n+r-i+1}{r+1}
\end{align*}
which is the expression in \cref{conj:betarr1}.
\end{remark}

\end{document}